\theoremstyle{plain}
\newtheorem{theorem}{Theorem}[section]
\newtheorem{conjecture}[theorem]{Conjecture}
\newtheorem{corollary}[theorem]{Corollary}
\newtheorem{definition}[theorem]{Definition}
\newtheorem{example}[theorem]{Example}
\newtheorem{lemma}[theorem]{Lemma}
\newtheorem{proposition}[theorem]{Proposition}
\newtheorem{remark}[theorem]{Remark}
\newcommand{\bbE}{\mathbb{E}}
\newcommand{\bbP}{\mathbb{P}}
\newcommand{\cB}{\mathcal{B}}
\newcommand{\cD}{\mathcal{D}}
\newcommand{\fc}{\mathfrak{c}}
\newcommand{\fm}{\mathfrak{m}}
\newcommand{\Dyck}{\operatorname{Dyck}}
\newcommand{\Func}{\operatorname{Func}}
\newcommand{\loft}{\operatorname{loft}}
\newcommand{\mr}{\operatorname{mr}}
\newcommand{\Par}{\operatorname{Par}}
\newcommand{\Path}{\operatorname{Path}}
\newcommand{\RP}{\operatorname{RP}}
\newcommand{\Unif}{\operatorname{Unif}}
\newcommand{\Var}{\operatorname{Var}}
\newcommand{\WT}{\operatorname{WT}}
\newcommand{\wt}{\operatorname{wt}}
\definecolor{dark-violet}{RGB}{148, 0, 211}
\begin{document}

\title{Large-scale Rook Placements}
\author{Pakawut Jiradilok}
\date{\today}

\address{Department of Mathematics, Massachusetts Institute of Technology, Cambridge, MA 02139}
\email[P.~Jiradilok]{pakawut@mit.edu}

\begin{abstract}
For each certain ``nice" piecewise linear function $f:[0,1] \to [0,1]$, we consider a family of growing Young diagrams $\{\lambda(f,N)\}_{N=1}^{\infty}$ by enlarging the region under the graph of $f$. We compute asymptotic formulas for the number of rook placements of the shape $\lambda(f,N)$. We prove that the normalized cumulative X-ray of a uniformly random permutation, as the size of the permutation grows, exhibits a limit shape phenomenon.
\end{abstract}

\subjclass{05A16 (Primary) 05A05, 05A19, 05A20, 60F05 (Secondary)}
\keywords{large-scale rook placement, Young diagram, partition, dilation, asymptotic formula, integral formula, Dyck path, Motzkin path, Schr\"{o}der number, ground bump, waterfall, combinatorial inequality, permutation, X-ray, cumulative X-ray, limit shape, random permutation}

\maketitle

\section{Introduction}
While rook placements are classical objects in combinatorics (cf. e.g. \cite[Chapter~7]{Rio02} and \cite[Chapter~2]{EC1}), there are many recent works in the literature studying them (e.g. \cite{BR06, BLRS14, BLRS16, Bar21}). Enumerative combinatorics of rook placements deals with problems of counting the number of ways to place a certain number of non-attacking rooks on a certain subset of a chessboard. In many cases, one obtains nice formulas. For instance, it is a well-known elementary exercise that if the subset of the chessboard takes the shape of a Young diagram of a partition, then the number of rook placements has a nice product formula.

In this paper, we study {\em large-scale} rook placements. We are interested in the family of rook placements when the board on which non-attacking rooks are placed grows in size in the following manner. We define a class $\mathcal{P}$ of ``nice" piecewise linear functions from $[0,1]$ to $[0,1]$ (see the exact definition in Subsection~\ref{ss:class-P}). Suppose that $f \in \mathcal{P}$ belongs to this class. We obtain a family of Young diagrams by dilating the region under the graph of $f$ from the unit square $[0,1] \times [0,1]$ to $[0,N] \times [0,N]$ for each positive integer $N$. More precisely, we let $\lambda(f, N)$ be the Young diagram with $N$ rows whose $i^{\text{th}}$ row has $(\lambda(f,N))_i := \left\lceil N \cdot f(i/N) \right\rceil$ boxes (see Subsection~\ref{ss:asymp-P} for details). Let $\RP(\lambda(f,N))$ denote the set of rook placements of the shape $\lambda(f,N)$. Our first main result of this paper, Theorem~\ref{thm:asymp-P}, says that the cardinality of $\RP(\lambda(f,N))$ behaves well asymptotically:
\[
\log \left( \#\RP(\lambda(f,N)) \right) = N \log N + B_f \cdot N + \frac{1}{2} \log N + O_f(1),
\]
for positive integers $N$ which are multiples of a certain integer depending on $f$ (see the statement of Theorem~\ref{thm:asymp-P} for details), and we establish the following integral formula for the coefficient $B_f$:
\[
B_f = \int_0^1 \log(f(x)+x-1) \, dx.
\]

Our next stop is a special subclass $\widetilde{\mathcal{P}}$ of the function class $\mathcal{P}$. We might refer to $\widetilde{\mathcal{P}}$ as the ``combinatorial" class, since it contains, rather naturally, many familiar objects from algebraic combinatorics such as Dyck paths and Motzkin paths. The class $\widetilde{\mathcal{P}}$ is a countable union of finite families of functions: $\widetilde{\mathcal{P}} = \bigcup_{k=1}^{\infty} \widetilde{\mathcal{P}}_k$ (see Section~\ref{s:tilde-P} for the precise definition). Bijective combinatorics in $\widetilde{\mathcal{P}}$ is noteworthy, and so we spend Subsection~\ref{ss:bijective-combin} discussing it. This subsection contains a bijective-combinatorics flavor which seems less analytic than its neighboring parts. For example, we provide bijective arguments resulting in Corollary \ref{cor:wt-P_k-count} which states
\[
|\widetilde{\mathcal{P}}_k| = \frac{1}{k} \binom{3k-2}{k-1}.
\]
The functions in $\widetilde{\mathcal{P}}_k$ are in one-to-one correspondence with combinatorial objects which we call ``waterfalls" (see Subsection~\ref{ss:bijective-combin} for details). Studying waterfalls yields the following curious combinatorial formula, given in Proposition~\ref{prop:curious}:
\[
\sum_{D \in \Dyck(k)} \wt(D) = \frac{1}{k} \binom{3k-2}{k-1},
\]
where $\Dyck(k)$ denotes the family of Dyck paths of length $2k$ --- lattice paths from $(0,k)$ to $(k,0)$ with $k$ unit steps to the right and $k$ unit steps down which never go below the line $X+Y = k$ --- and the {\em weight} $\wt(D)$ is defined as
\[
\wt(D) := \prod_{i=1}^{k-1} \# \left\{ j \in \mathbb{Z} \, | \, i+j > k \text{ and } (i,j) \in D \right\}.
\]

Having visited waterfalls, we proceed to Subsection~\ref{ss:precise-asymp}. Functions in the combinatorial class allow for even more precise asymptotics for the number of rook placements. Our second main result, Theorem~\ref{thm:asymp-wt-P}, provides the following asymptotic formula for $f \in \widetilde{\mathcal{P}}_k$ as follows. We have
\[
\log \left( \# \RP(\lambda(f,N)) \right) = N \log N + B_f \cdot N + \frac{1}{2} \log N + D_f + O_f(1/N),
\]
for positive integers $N \in k \mathbb{Z}$, where the coefficient $B_f$ is the same as before, and we give the following integral formula for the coefficient $D_f$:
\[
D_f := \frac{1}{2} \log(2\pi) + \frac{1}{2} \int_0^1 \frac{xf'(x) - f(x) + 1}{x(f(x)+x-1)} dx.
\]

Now that for each function $f \in \widetilde{\mathcal{P}}_k$, there are two coefficients $B_f$ and $D_f$ associated to it, one might wonder about the possible ranges of these numbers. Proposition~\ref{prop:range-B} states that
\[
-\log k - \frac{1}{k} \le B_f \le -1.
\]
Both upper bound and lower bound are tight. Each of them is attained by exactly one function in $\widetilde{\mathcal{P}}_k$.

Similarly, we have tight bounds for $D_f$. Proposition~\ref{prop:range-D} states that
\[
\frac{1}{2} \log \left( \frac{2\pi}{k} \right) \le D_f \le \frac{1}{2} \log \left( \frac{2^k \pi}{k} \right).
\]
The upper bound is attained by exactly one function in $\widetilde{\mathcal{P}}_k$. The equality cases for the lower bound is rather interesting: the lower bound is attained by a Catalan-numerous family of functions inside $\widetilde{\mathcal{P}}_k$.

Each rook placement --- or, more generally, each permutation --- comes with a certain sequence of non-negative integers called the {\em X-ray}. An object which appears in the field of discrete tomography (cf. e.g. \cite{HK99}), the X-ray of permutation has been investigated from algebraic and combinatorial points of view (cf. e.g. \cite{BF14, BMPS05}). It is related to other objects in combinatorics such as Skolem sets (cf. e.g. \cite{Nor08}) and permutohedra (cf. e.g. \cite{Pos09}). For each permutation $\pi \in S_n$, which we consider as an $n \times n$ permutation matrix, the {\em cumulative X-ray} of $\pi$ is the function $\xi_{\pi}:[0,2n] \to [0,n]$ given by
\[
\xi_{\pi}(t) := \sum_{\substack{i,j \in [n] \\ i+j \le t}} \pi_{ij},
\]
and the {\em normalized cumulative X-ray} of $\pi \in S_n$ is the function $\widetilde{\xi}_{\pi}:[0,2] \to [0,1]$ given by $\widetilde{\xi}_{\pi}(t) := \frac{1}{n} \cdot \xi_{\pi}(nt)$. Thus, the graph of the normalized cumulative X-ray is simply the graph of the cumulative X-ray rescaled from the rectangle $[0,2n] \times [0,n]$ to $[0,2] \times [0,1]$.

We have arrived at our last stop, where we consider the X-ray of a random large rook placement. For each a partition $\lambda$, we let $N \odot \lambda$ be the partition obtained from magnifying $\lambda$ by a factor of $N$ (see Section~\ref{s:rook-placements} for the precise definition). Conjecture~\ref{conj} predicts that the normalized cumulative X-ray $\widetilde{\xi}_{\pi}$ exhibits a limit shape phenomenon: for a fixed real $\varepsilon > 0$, if $\pi$ is a uniformly random rook placement of the shape $N \odot \lambda$, then
\[
\bbP\!\left( \sup_{t \in [0,2]} \left| \widetilde{\xi}_{\pi}(t) - \fm_{\lambda}(t) \right| < \varepsilon \right) \to 1,
\]
as $N \to \infty$, where $\fm_{\lambda}:[0,2] \to [0,1]$ is a certain function depending on the shape $\lambda$. Equation~(\ref{eq:limit-shape}) in Subsection~\ref{ss:cumulative-X-rays} provides a formula for this function.

Our third main result of this paper, Theorem~\ref{thm:limit-shape}, proves this conjecture in the special case when $\lambda = \square$ is a partition with one box. In other words, it says that the normalized cumulative X-ray of a uniformly random permutation, as the size of the permutation grows, exhibits a limit shape phenomenon in the above sense. We note that it is easy to compute the limit shape for the permutation case explicitly:
\[
\fm_{\square}(t) :=
\begin{cases}
\frac{t^2}{2} & \text{if } 0 < t \le 1, \text{ and} \\
-\frac{t^2}{2} + 2t - 1 & \text{if } 1 < t \le 2.
\end{cases}
\]
While Theorem~\ref{thm:limit-shape} proves Conjecture~\ref{conj} for only one very special case, we hope that one proof technique is applicable, perhaps with some more work, for other shapes $\lambda$ as well.

We remark that since rook placements can be considered as permutations, our work in this paper is closely related to an active and exciting field of research on large permutations and ``permutons" (cf. e.g. \cite{HKMRS13, AM14, GGKK15, GHK3L17, KKRW20}). For example, our construction of the normalized cumulative X-ray is reminiscent of that of permutons. It would be interesting, in the author's opinion, to see how tools from the permuton literature can be applied to better understand large rook placements.

\subsection*{Outline} In Section~\ref{s:rook-placements}, we give some definitions and present some elementary facts about rook placements. Section~\ref{s:P} focuses on the class $\mathcal{P}$ of ``nice" piecewise linear functions. It contains Theorem~\ref{thm:asymp-P}, our first main result. Section~\ref{s:tilde-P} focuses on the ``combinatorial" class $\widetilde{\mathcal{P}}$. We discuss some bijective combinatorics in Subsection~\ref{ss:bijective-combin}. We establish Theorem~\ref{thm:asymp-wt-P}, our second main result, in Subsection~\ref{ss:precise-asymp}. We give some properties of the coefficient $D_f$ in Subsection~\ref{ss:prop-D}. We prove inequalities on the coefficients $B_f$ and $D_f$ in Subsection~\ref{ss:bounds-B-D}. In Section~\ref{s:X-rays}, we discuss probabilities and X-rays. It contains Conjecture~\ref{conj}. We deduce this conjecture in the special case of random permutations from Theorem~\ref{thm:limit-shape}, our third main result, in Subsection~\ref{ss:limit-random-permutation}.

\bigskip

\section{Rook Placements}\label{s:rook-placements}
For each positive integer $n$, let $S_n$ denote the set of permutations of $[n] := \{1, 2, \ldots, n\}$. We think of a permutation $\pi \in S_n$ as an $n \times n$ matrix (``permutation matrix")
\[
\pi = \begin{bmatrix}
\pi_{11} & \pi_{12} & \cdots & \pi_{1n} \\
\pi_{21} & \pi_{22} & \cdots & \pi_{2n} \\
\vdots & \vdots & \ddots & \vdots \\
\pi_{n1} & \pi_{n2} & \cdots & \pi_{nn}
\end{bmatrix},
\]
where each entry $\pi_{ij}$ is either $0$ or $1$, each row has exactly one $1$, and each column has exactly one $1$. A {\em partition} is a finite sequence of weakly decreasing positive integers. A partition $\lambda$ is said to have {\em (exactly) $n$ parts} if the length of $\lambda$, as a finite sequence, is $n$. Let us denote by $\Par$ the set of all partitions. By convention, we also include the empty partition $[\hphantom{0}]$ in $\Par$. Consider the set
\[
\cB_n := \left\{ [\lambda_1, \lambda_2, \ldots, \lambda_n] \in \Par | \, n = \lambda_1 \ge \lambda_2 \ge \cdots \ge \lambda_n > 0 \right\}.
\]
In other words, $\cB_n$ is the set of partitions $\lambda$ with exactly $n$ parts such that $\lambda_1 = n$. Given a partition $\lambda \in \cB_n$, a {\em rook placement} of the shape $\lambda$ is a permutation $\pi \in S_n$ such that
\begin{center}
    for any $i, j \in [n]$, if $j > \lambda_{n+1-i}$, then $\pi_{ij} = 0$.
\end{center}
We use the notation $\RP(\lambda)$ to denote the set of rook placements of the shape $\lambda$. The cardinality of $\RP(\lambda)$ has a well-known and easy-to-prove formula: for any $\lambda \in \cB_n$,
\begin{equation}\label{eq:RP-prod}
    \#\RP(\lambda) = \prod_{i=1}^n (\lambda_i - (n-i)).
\end{equation}
One particular point to notice about the product formula above that is particularly beautiful, in the author's opinion, is that the formula holds {\em even when} there are no rook placements of the shape $\lambda$. In other words, for partitions $\lambda \in \cB_n$ such that $\RP(\lambda) = \varnothing$, the right-hand side of the formula becomes zero (not some negative integer).

We define $\cD_n := \{\lambda \in \cB_n \, | \, \RP(\lambda) \text{ is not empty.}\}$. It is well-known that for $\lambda \in \cB_n$, the partition $\lambda$ belongs to $\cD_n$ if and only if for all $i \in [n]$, we have $\lambda_i \ge n+1-i$. It is also well-known that $|\cB_n|$ is the central binomial coefficient $\binom{2n-2}{n-1}$ and that $|\cD_n|$ is the $n^{\text{th}}$-Catalan number $C_n := \frac{1}{n+1}\binom{2n}{n}$.

For each non-negative integer $n$, let $\Par(n)$ denote the set of all partitions $\lambda$ such that the sum of all parts of $\lambda$ is $n$. Now we describe how we {\em dilate} partitions. Suppose that $\lambda \in \Par(n)$ and let $m$ be a positive integer. We define the partition $m \odot \lambda \in \Par(m^2n)$ as follows. Imagine starting with the Young diagram of $\lambda$, and then replacing each of the $n$ boxes of $\lambda$ with an $m \times m$ array of boxes. The resulting diagram is the Young diagram of $m \odot \lambda$.

The following formula for the size of $\RP(m \odot \lambda)$ is an immediate consequence (and also a mild generalization) of Equation~(\ref{eq:RP-prod}).

\begin{proposition}
Let $m$ and $n$ be positive integers. For any partition $\lambda = [\lambda_1, \lambda_2, \ldots, \lambda_n] \in \cB_n$, we have $m \odot \lambda \in \cB_{mn}$ and
\[
\#\RP(m \odot \lambda) = m!^n \cdot \prod_{i=1}^n \binom{m(\lambda_i - (n-i))}{m}.
\]
Here, the {\em binomial coefficient} is defined for $a \in \mathbb{Z}$ and $b \in \mathbb{Z}_{\ge 1}$ as $\binom{a}{b} := \frac{a(a-1)\cdots (a-b+1)}{b!}$.
\end{proposition}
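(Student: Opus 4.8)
The plan is to reduce to the classical product formula (\ref{eq:RP-prod}) by a direct computation. First I would identify the partition $m \odot \lambda$ explicitly: since each box of $\lambda$ becomes an $m \times m$ block, the Young diagram of $m \odot \lambda$ has $mn$ rows, and for indices written as $i = m(i'-1) + r$ with $i' \in [n]$ and $r \in [m]$, the part $(m \odot \lambda)_i$ equals $m \lambda_{i'}$. In particular the longest part is $m \lambda_1 = mn$, so $m \odot \lambda \in \cB_{mn}$, as claimed. It then remains to feed this description into (\ref{eq:RP-prod}).

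Next I would evaluate the product $\prod_{i=1}^{mn} \big((m\odot\lambda)_i - (mn - i)\big)$. Reindexing by the block structure, write $i = m(i'-1)+r$ where $i'$ runs over $[n]$ (in the order corresponding to the rows of $\lambda$, i.e. with $\lambda_{i'}$ weakly decreasing) and $r$ runs over $\{1, \dots, m\}$. For such $i$ one has $mn - i = mn - m(i'-1) - r = m(n - i' + 1) - r$, so the factor becomes $m\lambda_{i'} - m(n-i'+1) + r = m(\lambda_{i'} - (n - i')) - (m - r)$. As $r$ ranges over $\{1, \dots, m\}$, the quantity $m - r$ ranges over $\{0, 1, \dots, m-1\}$, so the product of the $m$ factors coming from the $i'$-th block of $\lambda$ is
\[
\prod_{s=0}^{m-1} \big( m(\lambda_{i'} - (n-i')) - s \big)
= m! \cdot \binom{m(\lambda_{i'} - (n-i'))}{m},
\]
using the stated definition of the binomial coefficient for arbitrary integer top argument. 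Taking the product over $i' \in [n]$ gives $m!^n \prod_{i'=1}^n \binom{m(\lambda_{i'} - (n-i'))}{m}$, which is the desired formula.

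One point that deserves a remark, rather than a genuine obstacle, is the case where $\RP(\lambda) = \varnothing$: then some factor $\lambda_{i'} - (n - i')$ is $\le 0$, and one must check that the formula still returns $0$ (not a spurious negative or undefined value) and that this is consistent with $\RP(m \odot \lambda)$ also being empty. If $\lambda_{i'} - (n-i') \le 0$ then $m(\lambda_{i'}-(n-i'))$ is an integer in $\{0, -m, -2m, \dots\} \subseteq \{0, -1, -2, \dots\}$, so one of the factors $m(\lambda_{i'}-(n-i')) - s$ with $0 \le s \le m-1$ vanishes (namely $s = 0$ if the value is $0$, and in general the factor indexed by the residue is hit since consecutive integers $0, -1, \dots, -(m-1)$ are covered), whence $\binom{m(\lambda_{i'}-(n-i'))}{m} = 0$; simultaneously, the same inequality shows $(m\odot\lambda)$ fails the membership criterion for $\cD_{mn}$, so both sides are $0$. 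The only mild care needed is the bookkeeping of the reindexing $i \leftrightarrow (i', r)$ and matching the orientation conventions of $\cB_n$ (parts indexed so that $\lambda_1$ is largest) against the rook-placement convention in (\ref{eq:RP-prod}); everything else is a routine substitution, and this is where I would be most careful to avoid an off-by-one error in the correspondence between $n - i + 1$ and the block index.
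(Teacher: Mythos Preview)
Your approach is exactly what the paper intends: it states the proposition as ``an immediate consequence (and also a mild generalization) of Equation~(\ref{eq:RP-prod})'' and gives no further proof, and your reindexing $i=m(i'-1)+r$ followed by regrouping the falling factorial is precisely the computation that unpacks this.

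One small slip in your optional remark on the degenerate case: it is \emph{not} true that $\binom{m(\lambda_{i'}-(n-i'))}{m}=0$ for every index with $\lambda_{i'}-(n-i')\le 0$. For instance, with $m=2$ and $\lambda_{i'}-(n-i')=-1$ one gets $\binom{-2}{2}=3\ne 0$; the $m$ consecutive integers $a,a-1,\dots,a-m+1$ contain $0$ only when $0\le a\le m-1$, which fails for $a\le -m$. The correct reason both sides vanish is global rather than local: since $\lambda_1-(n-1)=1$ and $(\lambda_{i+1}-(n-i-1))-(\lambda_i-(n-i))=\lambda_{i+1}-\lambda_i+1\le 1$, the sequence $\lambda_i-(n-i)$ starts at $1$ and changes by at most $1$ at each step, so if it ever becomes nonpositive it must pass through $0$ at some index $i''$, and then the $s=0$ factor in the $i''$-th block kills the product. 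This does not affect your main argument, which is correct as written.
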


It is easy to see that for $\lambda \in \cB_n$, the partition $\lambda$ belongs to $\cD_n$ if and only if for any positive integer $m$, the partition $m \odot \lambda$ belongs to $\cD_{mn}$.

\bigskip

\section{The class $\mathcal{P}$ of piecewise linear functions}\label{s:P}
\subsection{The functions and their lofts}\label{ss:class-P}
Consider the class $\mathcal{P}$ of functions $f:[0,1] \to [0,1]$ with the following properties:
\begin{itemize}
\item $f$ is weakly decreasing,
\item $f$ is piecewise linear with a finite number of non-differentiable points,
\item all the non-differentiable points of $f$ are rational numbers in $[0,1]$,
\item there exists $\varepsilon > 0$ such that for any $0 \le x < \varepsilon$, we have $f(x) = 1$,
\item $f(1) > 0$, and
\item for any $a \in (0,1)$, we have $\lim_{x \searrow a} f(x) > 1 - a$.
\end{itemize}

\begin{example}\label{ex:irr-slope}
An example of a function in $\mathcal{P}$ is the following function $f:[0,1] \to [0,1]$ given by
\[
f(x) := \begin{cases}
1 & \text{if } x < \frac{1}{2}, \\
\frac{1}{\sqrt{2}} & \text{if } x = \frac{1}{2}, \\
\frac{1}{\sqrt{2}} - \frac{x}{\sqrt{7}} & \text{if } x > \frac{1}{2}.
\end{cases}
\]
It is straightforward to check that all the conditions for functions to be in $\mathcal{P}$ are satisfied. Note that while we require the non-differentiable points to be rational numbers in $[0,1]$, it is fine for the {\em values} of the function at the non-differentiable points to be irrational. In our example here, the value of the function at the non-differentiable point $1/2$ is $1/\sqrt{2}$, which is irrational. Moreover, it is also fine for the slope of some piece of the function to be irrational. In our example here, the slope of the function when $x \in (1/2,1]$ is $-1/\sqrt{7}$, which is irrational.
\end{example}

\begin{example}
Here we present a non-example. A function that does {\em not} belong to $\mathcal{P}$ is the function $g:[0,1] \to [0,1]$ given by
\[
g(x) := \begin{cases}
1 & \text{if } x \le \frac{1}{2}, \\
\frac{1}{2} & \text{if } x > \frac{1}{2}.
\end{cases}
\]
Note that even though $g(x) > 1 - x$ for all $x \in (0,1]$, the limit $\lim_{x \searrow (1/2)} g(x) = 1/2$. This violates the last condition for functions to belong to $\mathcal{P}$.
\end{example}

The following proposition gives some basic properties of functions in $\mathcal{P}$. These properties can be proved immediately from the definition of $\mathcal{P}$, so we omit the proof.

\begin{proposition}\label{prop:basic-P}
Let $f \in \mathcal{P}$. Then,
\begin{itemize}
\item[(a)] for every $x \in [0,1]$, we have $0 < f(x) \le 1$.
\item[(b)] for every $a \in (0,1]$, we have $f(a) > 1-a$ and $\lim_{x \nearrow a} f(x) > 1-a$.
\item[(c)] for every $\varepsilon \in (0,1]$, there exists $\delta > 0$ such that for every $x \in [\varepsilon,1]$, we have the inequality $f(x) + x - 1 > \delta$.
\end{itemize}
\end{proposition}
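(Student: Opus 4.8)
The plan is to verify the three items in turn, in each case reading the conclusion off from the monotonicity of $f$ together with the two ``boundary'' conditions $f(1) > 0$ and $\lim_{x \searrow a} f(x) > 1 - a$. For (a), the upper bound $f(x) \le 1$ is immediate since $f$ takes values in $[0,1]$, and the strict lower bound follows from $f$ being weakly decreasing, so that $f(x) \ge f(1) > 0$ for every $x \in [0,1]$. For (b), fix $a \in (0,1]$. If $a = 1$ then $f(a) = f(1) > 0 = 1 - a$, and $\lim_{x \nearrow 1} f(x) \ge f(1) > 0$ by monotonicity. If $a \in (0,1)$, then weak monotonicity gives $f(a) \ge \lim_{x \searrow a} f(x)$, and the last defining property of $\mathcal{P}$ gives $\lim_{x \searrow a} f(x) > 1 - a$; hence $f(a) > 1 - a$. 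The left limit is then handled by $\lim_{x \nearrow a} f(x) \ge f(a) > 1 - a$, again by monotonicity. (All one-sided limits used here exist because $f$ is piecewise linear with finitely many breakpoints.)

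Part (c) is the step requiring the most care, precisely because a function in $\mathcal{P}$ need not be continuous: as in Example~\ref{ex:irr-slope}, $f$ may have downward jumps at its non-differentiable points, so one cannot simply invoke continuity plus compactness to say the infimum is attained. Set $h(x) := f(x) + x - 1$ on $[\varepsilon,1]$; by part (b) we already know $h(x) > 0$ pointwise, so $\inf_{[\varepsilon,1]} h \ge 0$, and the task is to rule out $\inf_{[\varepsilon,1]} h = 0$. Suppose, for contradiction, that there is a sequence $x_n \in [\varepsilon,1]$ with $h(x_n) \to 0$. By compactness we pass to a subsequence with $x_n \to x^\ast \in [\varepsilon,1]$, and, passing to a further subsequence, we may assume that either $x_n = x^\ast$ for all $n$, or $x_n \nearrow x^\ast$, or $x_n \searrow x^\ast$. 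In the first case $h(x_n) = h(x^\ast) > 0$ by part (b), contradicting $h(x_n) \to 0$. In the second case $x^\ast \in (0,1]$, and $\lim_n f(x_n) = \lim_{x \nearrow x^\ast} f(x)$ exists by piecewise linearity and exceeds $1 - x^\ast$ by part (b), so $h(x_n) \to \lim_{x \nearrow x^\ast} f(x) + x^\ast - 1 > 0$, a contradiction. In the third case $x^\ast < 1$ (since $x_n > x^\ast$) and $x^\ast \ge \varepsilon > 0$, so $\lim_n f(x_n) = \lim_{x \searrow x^\ast} f(x) > 1 - x^\ast$ by the last defining condition of $\mathcal{P}$, whence again $h(x_n) \to \lim_{x \searrow x^\ast} f(x) + x^\ast - 1 > 0$. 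In every case we reach a contradiction, so $\inf_{[\varepsilon,1]} h > 0$, and any $\delta$ with $0 < \delta < \inf_{[\varepsilon,1]} h$ works.

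The only genuine obstacle is the one flagged in (c): the possible discontinuity of $f$. An alternative to the compactness argument is to use directly that $f$ has only finitely many breakpoints, so that on $[\varepsilon,1]$ the function $h$ is affine on each of finitely many subintervals; hence $\inf_{[\varepsilon,1]} h$ equals the minimum of the finite set consisting of the values $h(t)$ at breakpoints and endpoints together with the one-sided limits of $h$ at breakpoints, each of which is positive by part (b) and by the $\lim_{x \searrow a} f(x) > 1 - a$ condition. Either way, the content of (c) is exactly that this last defining property of $\mathcal{P}$ upgrades the pointwise bound $f(x) > 1 - x$ of part (b) to a bound that is uniform away from $x = 0$.
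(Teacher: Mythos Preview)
Your proof is correct. The paper itself omits the proof of this proposition, stating only that ``these properties can be proved immediately from the definition of $\mathcal{P}$, so we omit the proof,'' so there is no argument to compare against; your compactness argument for (c), together with the alternative finite-breakpoint argument you sketch, is exactly the kind of routine verification the paper has in mind.
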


Each function $f \in \mathcal{P}$ comes with a useful quantity we call the {\em loft} of $f$ defined as follows.

\begin{definition}\label{defn:loft}
For each function $f \in \mathcal{P}$, define the {\em loft} of $f$ as
\[
\loft(f) := \sup \left\{ \varepsilon \in [0,1] : \forall x \in [0,\varepsilon], f(x) = 1, \text{ and } \, \forall x \in (\varepsilon,1], f(x) > 1 - x + \varepsilon \right\}.
\]
\end{definition}

The following proposition gives some basic properties of the loft of a function in $\mathcal{P}$.

\begin{proposition}\label{prop:basic-loft}
Let $f \in \mathcal{P}$. Then,
\begin{itemize}
\item[(a)] its loft is strictly positive: $0 < \loft(f) \le 1$.
\item[(b)] for every real number $x$ such that $0 \le x \le \loft(f)$, we have $f(x) = 1$.
\item[(c)] for every real number $x$ such that $\loft(f) \le x \le 1$, we have $f(x) + x - 1 \ge \loft(f)$.
\item[(d)] for every $x \in [0,1]$, we have
\[
x \ge f(x) + x - 1 \ge \min \left\{ x, \loft(f) \right\}.
\]
\end{itemize}
\end{proposition}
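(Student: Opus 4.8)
Throughout, write $\ell := \loft(f)$ and let $S \subseteq [0,1]$ denote the set appearing inside the supremum in Definition~\ref{defn:loft}, so that $\ell = \sup S$. The plan is to first pin down the elementary structure of $S$ — namely that it is a nonempty initial segment of $[0,1]$ — and then read off (a)--(d) by combining the two defining conditions for membership in $S$ with a limiting argument as the parameter tends to $\ell$ from below.

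First I would record that $S$ is downward closed: if $\varepsilon \in S$ and $0 \le \varepsilon' \le \varepsilon$, then "$f \equiv 1$ on $[0,\varepsilon']$" is inherited from "$f \equiv 1$ on $[0,\varepsilon]$", while for $x \in (\varepsilon',1]$ the inequality "$f(x) > 1-x+\varepsilon'$" is checked by splitting into $x \le \varepsilon$ (where $f(x)=1 > 1-x+\varepsilon'$ since $x > \varepsilon'$) and $x > \varepsilon$ (where $f(x) > 1-x+\varepsilon \ge 1-x+\varepsilon'$). Next I would prove the nontrivial half of (a), that $\ell > 0$: using the defining property of $\mathcal{P}$ pick $\varepsilon_0 > 0$ with $f \equiv 1$ on $[0,\varepsilon_0)$, apply Proposition~\ref{prop:basic-P}(c) with threshold $\varepsilon_0/2$ to get $\delta > 0$ with $f(x)+x-1 > \delta$ on $[\varepsilon_0/2,1]$, and verify that $\eta := \min\{\varepsilon_0/2,\delta\}$ lies in $S$ (for $x \in (\eta,\varepsilon_0/2)$ use $f(x)=1 > 1-x+\eta$; for $x \in [\varepsilon_0/2,1]$ use $f(x) > 1-x+\delta \ge 1-x+\eta$). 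The bound $\ell \le 1$ is immediate from $S \subseteq [0,1]$. Downward closure together with $\ell = \sup S$ then gives $[0,\ell) \subseteq S$, and since the first defining condition forces $f(\varepsilon)=1$ for every $\varepsilon \in S$, we get $f \equiv 1$ on $[0,\ell)$.

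With this in hand the remaining parts are limiting arguments. For (b) it only remains to see $f(\ell) = 1$: for every $\varepsilon \in [0,\ell)$ we have $\varepsilon \in S$ and $\ell \in (\varepsilon,1]$, so the second defining condition gives $f(\ell) > 1-\ell+\varepsilon$; letting $\varepsilon \nearrow \ell$ (legitimate since $\ell > 0$) yields $f(\ell) \ge 1$, and $f(\ell) \le 1$ by Proposition~\ref{prop:basic-P}(a). For (c), the case $x = \ell$ is the identity $f(\ell)+\ell-1 = \ell$ coming from (b), and for $x \in (\ell,1]$ the same second condition with $\varepsilon \in [0,\ell)$ gives $f(x)+x-1 > \varepsilon$, so $\varepsilon \nearrow \ell$ gives $f(x)+x-1 \ge \ell$. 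Finally (d) is formal: the left inequality is just $f(x) \le 1$ (Proposition~\ref{prop:basic-P}(a)), and for the right inequality one splits at $x = \ell$, using $f(x)=1$ from (b) when $x \le \ell$ and using (c) when $x \ge \ell$.

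The one genuine obstacle is part (a) — showing the loft is strictly positive rather than merely nonnegative — since that is the only place where the axioms of $\mathcal{P}$ (the flat piece near $0$ together with the uniform gap $f(x)+x-1>\delta$ on $[\varepsilon,1]$ supplied by Proposition~\ref{prop:basic-P}(c)) must actually be used; everything else is bookkeeping with the definition and passing to limits. A minor point to watch is that $f$ need not be continuous, so one should argue with the precise one-sided statements of Proposition~\ref{prop:basic-P} rather than with limits of $f$ itself.
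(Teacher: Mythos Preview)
Your proof is correct and follows essentially the same approach as the paper's: both show a positive element lies in the defining set by combining the flat piece near $0$ with the uniform gap from Proposition~\ref{prop:basic-P}(c), and then obtain (b)--(d) by limiting arguments as the parameter approaches $\ell$ from below. The only cosmetic difference is that you first record downward closure of $S$ to get $[0,\ell)\subseteq S$ in one stroke, whereas the paper instead case-splits on whether $\ell$ itself lies in the set; your organization is slightly cleaner but the content is the same.
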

\begin{proof}
Consider any function $f \in \mathcal{P}$. Let $\mathcal{X}$ denote the set from Definition~\ref{defn:loft}:
\[
\mathcal{X} := \left\{ \varepsilon \in [0,1] : \forall x \in [0,\varepsilon], f(x) = 1, \text{ and } \, \forall x \in (\varepsilon,1], f(x) > 1 - x + \varepsilon \right\}.
\]

\smallskip

\textbf{(a)} It suffices to show that $\mathcal{X} \cap (0,1] \neq \varnothing$. Since $f \in \mathcal{P}$, there exists some $a > 0$ such that $f(a) = 1$. By Proposition~\ref{prop:basic-P}(c), there exists $b > 0$ such that for every $x \in [a,1]$, we have $f(x) + x - 1 > b$. Take $c := \min \{a, b\} \in (0,1]$. We claim that $c \in \mathcal{X}$.

First, for any $x \in [0,c]$, we have $1 \ge f(x) \ge f(c) \ge f(a) = 1$ and so $f(x) = 1$. Second, suppose $x \in (c,1]$. If $c < x \le a$, then $f(x) = 1 > 1 - x + c$. If $x > a$, then $f(x) + x - 1 > b \ge c$ and thus $f(x) > 1 - x + c$. This shows that $c \in \mathcal{X}$.

\smallskip

\textbf{(b)} It suffices to show that $f(\loft(f)) = 1$. If $\loft(f) \in \mathcal{X}$, we are done. If $\loft(f) \notin \mathcal{X}$, then for any positive integer $n$, there exists $a_n \in \mathcal{X}$ with $\loft(f) - \frac{1}{n} < a_n < \loft(f)$. Since $\loft(f) > a_n$, we have that
\[
f(\loft(f)) > 1 - \loft(f) + a_n > 1 - \frac{1}{n}.
\]
Since $n$ is arbitrary, we have $f(\loft(f)) = 1$.

\smallskip

\textbf{(c)} This is similar to part (b). If $\loft(f) \in \mathcal{X}$ and $x > \loft(f)$, then by the definition of $\mathcal{X}$, we have $f(x) > 1 - x + \loft(f)$. If $\loft(f) \in \mathcal{X}$ and $x = \loft(f)$, then $f(x) + x - 1 = f(\loft(f)) + \loft(f) - 1 = \loft(f)$.

On the other hand, if $\loft(f) \notin \mathcal{X}$, then for any positive integer $n$, there exists $a_n \in \mathcal{X}$ with $\loft(f) - \frac{1}{n} < a_n < \loft(f)$. For every real number $x \ge \loft(f)$, we then have $x > a_n$, and thus
\[
f(x) > 1 - x + a_n > 1 - x + \loft(f) - \frac{1}{n}.
\]
Since $n$ is arbitrary, we have $f(x) \ge 1 - x + \loft(f)$.

\smallskip

\textbf{(d)} This part follows from parts (b) and (c).
\end{proof}

Proposition~\ref{prop:basic-loft}(d) is an analytically useful property of the loft of a function in $\mathcal{P}$. It says roughly that once $x \in [0,1]$ is far enough from $0$, the point $(x,f(x))$ on the graph of the function is far enough from the line $X+Y = 1$.

\subsection{An asymptotic formula for the number of rook placements for functions in $\mathcal{P}$}\label{ss:asymp-P} Suppose that a function $f \in \mathcal{P}$ is given. Let $\rho_1, \rho_2, \ldots, \rho_m$ be the non-differentiable points of $f$ inside the open interval $(0,1)$, listed in increasing order. (Here $m$ is a non-negative integer. We use the convention that $m = 0$ if and only if $f$ is differentiable on $(0,1)$, which is when $f(x) = 1$ for all $0 \le x < 1$.) For convenience, we define $\rho_0 := 0$ and $\rho_{m+1} := 1$. Note that for each $i \in [m+1]$, the function $f$ is linear on the open interval $(\rho_{i-1}, \rho_i)$. Define the function $f_i:[\rho_{i-1},\rho_i] \to \mathbb{R}$ to be the unique linear extension of $f|_{(\rho_{i-1}, \rho_i)}$ from $(\rho_{i-1}, \rho_i)$ to $[\rho_{i-1}, \rho_i]$. There exist a non-positive real number $\mu_i$ and a real number $\beta_i$ such that $f_i(x) = \mu_i x + \beta_i$ for $x \in [\rho_{i-1}, \rho_i]$.

Note that since we define $f_i$ on the {\em closed} interval $[\rho_{i-1}, \rho_i]$, the functions $f_i$ and $f$ might have different values at $\rho_{i-1}$ and $\rho_i$. On the other hand, the two functions agree in the interior of the interval. Note also that $f_1(x) \equiv 1$ (i.e., $\mu_1 = 0$ and $\beta_1 = 1$).

Take any positive integer $N$ such that $N\rho_i \in \mathbb{Z}$ for every $i$. We define the partition $\lambda(f,N) \in \Par$ to be the partition with exactly $N$ parts whose $i^{\text{th}}$ part is given by
\[
\left(\lambda(f,N)\right)_i := \left\lceil N \cdot f(i/N) \right\rceil.
\]
Our definition of $\mathcal{P}$ guarantees that, as one may readily verify, $\lambda(f,N) \in \cD_N$; in other words, $\RP(\lambda(f, N))$ is always non-empty.

Our goal of this subsection is to compute an asymptotic formula for $\#\RP(\lambda(f,N))$ of the form
\[
\log \left( \#\RP(\lambda(f,N)) \right) = A_f \cdot N \log N + B_f \cdot N + C_f \cdot \log N + O_f(1),
\]
for positive integers $N$ such that $N\rho_i \in \mathbb{Z}$ for every $i$. Here, the notation $O_f$ means that the implicit constant depends only on the function $f$.

By Equation~(\ref{eq:RP-prod}), we can write
\begin{equation}\label{eq:RfN-appears}
    \log\left( \#\RP(\lambda(f,N)) \right) = \left( \sum_{0 < n \le N} \log \left( N f(n/N) + n - N \right) \right) + R(f,N),
\end{equation}
where $R(f,N)$ is the discrepancy from rounding:
\begin{equation}\label{eq:RfN-defn}
    R(f,N) := \sum_{0 < n \le N} \log \left( \frac{\left\lceil N f(n/N) \right\rceil + n - N}{N f(n/N) + n - N} \right).
\end{equation}

\begin{proposition}\label{prop:RfN-is-Of1}
We have $R(f,N) \ge 0$ and $R(f,N) = O_f(1)$.
\end{proposition}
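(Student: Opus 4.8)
The plan is to bound each summand in the definition of $R(f,N)$ individually and show the positive contributions form a convergent-type sum. Since $\lceil y \rceil \ge y$ for every real $y$, each logarithm in~(\ref{eq:RfN-defn}) has argument at least $1$, giving $R(f,N) \ge 0$ immediately. For the upper bound, write $a_n := Nf(n/N) + n - N$ for the ``true'' value of the $n$-th part before rounding; by Proposition~\ref{prop:basic-loft}(d) applied at $x = n/N$ we have $a_n \ge N \cdot \min\{n/N, \loft(f)\} = \min\{n, N\loft(f)\}$. Also $\lceil a_n \rceil - a_n < 1$, so the $n$-th summand is $\log(1 + (\lceil a_n\rceil - a_n)/a_n) \le (\lceil a_n \rceil - a_n)/a_n < 1/a_n \le 1/\min\{n, N\loft(f)\}$.

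Next I would split the sum over $n$ according to whether $n \le N\loft(f)$ or $n > N\loft(f)$. For $n > N\loft(f)$ the bound $1/a_n \le 1/(N\loft(f))$ applies, and there are at most $N$ such terms, contributing at most $1/\loft(f) = O_f(1)$. For $n \le N\loft(f)$ we use $1/a_n \le 1/n$, and $\sum_{1 \le n \le N} 1/n = O(\log N)$ — but this only gives $R(f,N) = O_f(\log N)$, which is not good enough. To get the sharper $O_f(1)$ bound I would instead exploit that on the initial stretch where $f(x) \equiv 1$ (i.e. for $n/N < \loft(f)$, hence in particular $n \le N\loft(f)$ and actually for $n$ up to roughly $N\loft(f)$), we have $Nf(n/N) = N$ exactly, so $a_n = n$ is already an integer and the rounding discrepancy $\lceil a_n \rceil - a_n$ is exactly $0$. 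Thus those summands vanish, and the only $n \le N\loft(f)$ that can contribute are those in the transitional range where $n/N$ is within $O_f(1/N)$ of a non-differentiable point — but more carefully: once $n/N \ge \varepsilon$ for the $\varepsilon$ in the definition of $\mathcal{P}$, we can use Proposition~\ref{prop:basic-P}(c) to get $a_n \ge \delta N$ for a constant $\delta = \delta_f > 0$, and again there are at most $N$ such terms, each contributing $\le 1/(\delta N)$, for a total $O_f(1)$.

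The cleanest organization is therefore: (i) summands with $n/N < \loft(f)$ contribute exactly $0$ since $a_n = n \in \mathbb{Z}$; (ii) summands with $n/N \ge \loft(f)$ satisfy $a_n \ge \loft(f) \cdot N$ by Proposition~\ref{prop:basic-loft}(c) (noting $f(x) + x - 1 \ge \loft(f)$ there, so $a_n = N(f(n/N) + n/N - 1) \ge N\loft(f)$), hence each contributes less than $1/(N\loft(f))$, and summing over at most $N$ values of $n$ gives $R(f,N) < 1/\loft(f)$. Combining, $0 \le R(f,N) < 1/\loft(f) = O_f(1)$. The main obstacle is precisely recognizing that one must use the loft structure — that $a_n$ is an exact integer on the flat initial segment, and that Proposition~\ref{prop:basic-loft}(c) gives the uniform lower bound $a_n \ge N\loft(f)$ on the remaining segment — rather than naively estimating $1/a_n$ by $1/n$, which would lose a spurious $\log N$ factor. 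Once the reduction to these two regimes is in place, the estimates are routine.
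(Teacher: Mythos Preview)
Your proof is correct and follows essentially the same strategy as the paper's: split the sum at the point where $f$ ceases to be identically $1$, observe that the summands vanish before that point (since $a_n = n$ is an integer), and use a uniform positive lower bound on $f(x)+x-1$ beyond it to control each remaining term by $O_f(1/N)$. The only cosmetic difference is that the paper takes the cutoff at $\rho_1$ (the first non-differentiable point) and invokes Proposition~\ref{prop:basic-P}(c), whereas you take the cutoff at $\loft(f)$ and invoke Proposition~\ref{prop:basic-loft}(c); since $\loft(f) \le \rho_1$ and both propositions furnish the needed uniform lower bound, the two arguments are interchangeable.
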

\begin{proof}
The first item $R(f,N) \ge 0$ is clear from Equation~(\ref{eq:RfN-defn}). We proceed to show that $R(f,N) = O_f(1)$. Notice that for $0 < n < \rho_1 N$, we have $f(n/N) = 1$. Therefore, we can write
\begin{equation}
    R(f,N) = \log \left( \frac{\left\lceil Nf(\rho_1) \right\rceil + \rho_1 N - N}{Nf(\rho_1) + \rho_1 N - N} \right) + \sum_{\rho_1 N < n \le N} \log \left( \frac{\left\lceil Nf(n/N) \right\rceil + n - N}{Nf(n/N) + n - N} \right).
\end{equation}
Since $\rho_1 \in (0,1]$, by Proposition~\ref{prop:basic-P}(c), there exists $a > 0$ such that for every $x \in [\rho_1, 1]$ we have $f(x) + x - 1 > a$. Using this with the inequality $\log(\left\lceil x \right\rceil) - \log(x) < 1/x, \forall x > 0$, we obtain
\begin{align*}
    R(f,N) &< \frac{1}{N(f(\rho_1)+\rho_1-1)} + \sum_{\rho_1 N < n \le N} \frac{1}{N \cdot \left(f(n/N) + (n/N) - 1\right)} \\
    &\le \frac{1}{N(f(\rho_1) + \rho_1 - 1)} + \sum_{\rho_1 N < n \le N} \frac{1}{N \cdot a} \\
    &= \frac{1}{f(\rho_1) + \rho_1 - 1} \cdot \frac{1}{N} + \frac{1-\rho_1}{a}.
\end{align*}
Since $\rho_1$ and $a$ depend only on $f$ (and not $N$), the quantity above is $O_f(1)$.
\end{proof}

Recall that the function $f$ is linear on each {\em open} interval $(\rho_{i-1}, \rho_i)$, while, at each $\rho_i$, there might be a ``jump." For instance, in Example~\ref{ex:irr-slope} above, the three values $\lim_{x \nearrow \rho_1} f(x)$, $f(\rho_1)$, and $\lim_{x \searrow \rho_1} f(x)$ are all different. It is not hard to see, however, that these possible jumps do not have a huge effect on the summation in Equation~(\ref{eq:RfN-appears}):
\begin{align}
    &\sum_{0 < n \le N} \log \left( N f(n/N) + n - N \right) \\
    &= \sum_{i=1}^{m+1} \left[ \sum_{\rho_{i-1} N < n \le \rho_i N} \log((\mu_i+1) n + (\beta_i-1) N) \right] + O_f(1).\label{eq:effect-from-jumps}
\end{align}
Combining this with Equation~(\ref{eq:RfN-appears}) and Proposition~\ref{prop:RfN-is-Of1}, we obtain
\begin{equation}\label{eq:RP-sum-log}
    \log\left( \#\RP\left( \lambda(f, N) \right) \right) = \sum_{i=1}^{m+1} \left[ \sum_{\rho_{i-1} N < n \le \rho_i N} \log((\mu_i+1) n + (\beta_i-1) N) \right] + O_f(1).
\end{equation}
We break the outer summation on the right-hand side above into when $i = 1$ and when $i \ge 2$. When $i = 1$, we have, by Stirling's formula,
\begin{align}
&\sum_{\rho_{i-1} N < n \le \rho_i N} \log((\mu_i+1)n+(\beta_i-1)N) = \log((\rho_1 N)!) \label{eq:rho_1N} \\
&= \rho_1 \cdot N\log N + (\rho_1 \log \rho_1 - \rho_1) \cdot N + \frac{1}{2} \log N + O_f(1).
\end{align}

When $2 \le i \le m+1$, observe that the function
\[
x \mapsto \log((\mu_i + 1) x + (\beta_i - 1)N)
\]
is well-defined on the whole closed interval $[\rho_{i-1} N, \rho_i N]$. Using the Euler-Maclaurin summation formula (cf. \cite[Appendix~B]{MV07}) with this function, we write
\begin{equation}
    \sum_{\rho_{i-1} N < n \le \rho_i N} \log((\mu_i+1)n+(\beta_i-1)N) = \int_{\rho_{i-1} N}^{\rho_i N} \log((\mu_i+1) x + (\beta_i-1) N) dx + O_f(1).
\end{equation}

By the change of variables $x \mapsto N \cdot x'$, we have
\begin{align}
    &\int_{\rho_{i-1} N}^{\rho_i N} \log((\mu_i+1) x + (\beta_i-1) N) dx \\
    &= (\rho_i - \rho_{i-1}) \cdot N \log N + \left\{ \int_{\rho_{i-1}}^{\rho_i} \log(f(x)+x-1) \, dx \right\} \cdot N. \label{eq:change-of-var}
\end{align}

The following is the main theorem of this section.
\begin{theorem}\label{thm:asymp-P}
Let $f \in \mathcal{P}$. Let $\rho_0, \rho_1, \ldots, \rho_{m+1}$ be as defined above. We have
\[
\log \left( \#\RP(\lambda(f,N)) \right) = N \log N + B_f \cdot N + \frac{1}{2} \log N + O_f(1),
\]
for positive integers $N$ such that $N\rho_i \in \mathbb{Z}$ for every $i$, where
\[
B_f = \int_0^1 \log(f(x)+x-1) \, dx.
\]
\end{theorem}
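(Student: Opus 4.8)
The plan is to assemble the estimates already prepared above. The starting point is Equation~(\ref{eq:RP-sum-log}), which reduces the claim to understanding the double sum $\sum_{i=1}^{m+1} \sum_{\rho_{i-1}N < n \le \rho_i N} \log((\mu_i+1)n + (\beta_i-1)N)$ up to an additive error $O_f(1)$. I would treat the inner sums in the two cases already set up: for $i = 1$ the sum is exactly $\log((\rho_1 N)!)$, handled by Stirling's formula as in Equation~(\ref{eq:rho_1N}); for $2 \le i \le m+1$ the summand extends to a differentiable function on the closed interval $[\rho_{i-1}N,\rho_i N]$, and the Euler--Maclaurin estimate followed by the change of variables in Equation~(\ref{eq:change-of-var}) rewrites the sum as $(\rho_i - \rho_{i-1})N\log N + \big( \int_{\rho_{i-1}}^{\rho_i} \log(f(x)+x-1)\,dx \big) N$, again modulo $O_f(1)$. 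Since there are only $m+1$ inner sums and $m$ depends on $f$ alone, the accumulated errors stay $O_f(1)$.

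Next I would collect terms by order of growth. The coefficient of $N\log N$ is $\rho_1 + \sum_{i=2}^{m+1}(\rho_i - \rho_{i-1})$, which telescopes to $\rho_{m+1} = 1$; thus the leading term is $N\log N$ (so $A_f = 1$). The term $\frac{1}{2}\log N$ comes solely from the Stirling expansion of $\log((\rho_1 N)!)$, with no other contribution of that order, so $C_f = \frac{1}{2}$. Finally, the coefficient of $N$ equals $(\rho_1\log\rho_1 - \rho_1) + \sum_{i=2}^{m+1}\int_{\rho_{i-1}}^{\rho_i}\log(f(x)+x-1)\,dx = (\rho_1\log\rho_1 - \rho_1) + \int_{\rho_1}^{1}\log(f(x)+x-1)\,dx$.

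It then remains to identify this quantity with $B_f$. On $(0,\rho_1)$ we have $f\equiv 1$ because $f_1 \equiv 1$ by construction, hence $\log(f(x)+x-1) = \log x$ there; since the value of $f$ at the single endpoint $\rho_1$ is irrelevant to the integral, $\int_0^{\rho_1}\log(f(x)+x-1)\,dx = \int_0^{\rho_1}\log x\,dx = \rho_1\log\rho_1 - \rho_1$, a convergent improper integral. Therefore the coefficient of $N$ equals $\int_0^{\rho_1}\log(f(x)+x-1)\,dx + \int_{\rho_1}^1\log(f(x)+x-1)\,dx = \int_0^1\log(f(x)+x-1)\,dx = B_f$. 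For completeness one checks that this integral is finite: by Proposition~\ref{prop:basic-loft}(d) the integrand is bounded above by $\log x \le 0$ and below by $\log(\min\{x,\loft(f)\})$, so near the origin it is controlled by a $\log$-type singularity and is otherwise bounded.

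I do not anticipate any real obstacle: all the analytic work --- the rounding bound in Proposition~\ref{prop:RfN-is-Of1}, the control of the jump discrepancies giving Equation~(\ref{eq:effect-from-jumps}), and the Stirling and Euler--Maclaurin estimates --- is already in place, and the remainder is a telescoping sum together with the elementary observation that $f\equiv 1$ to the left of $\rho_1$. The one point deserving care is that the Euler--Maclaurin error is $O_f(1)$ uniformly over $i \ge 2$; this rests on the fact that $(\mu_i+1)x + (\beta_i-1)N \ge \delta N$ on $[\rho_{i-1}N,\rho_i N]$ for a single constant $\delta = \delta(f) > 0$, which one derives from Proposition~\ref{prop:basic-P}(c) applied with $\varepsilon = \rho_1$.
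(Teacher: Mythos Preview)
Your proposal is correct and follows essentially the same route as the paper: combine the Stirling expansion for the $i=1$ block with the Euler--Maclaurin/change-of-variables estimate for $i\ge 2$, telescope the $N\log N$ coefficients, and identify the $N$-coefficient with $B_f$ via the observation that $f\equiv 1$ on $(0,\rho_1)$ so that $\int_0^{\rho_1}\log(f(x)+x-1)\,dx=\rho_1\log\rho_1-\rho_1$. The paper's proof is terser, but the content is the same; your additional remarks on finiteness of the improper integral and uniformity of the Euler--Maclaurin error are sound elaborations rather than a different argument.
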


Note that the integral is improper at $x = 0$. We interpret it as
\[
\lim_{\varepsilon \searrow 0} \int_{\varepsilon}^1 \log(f(x)+x-1) \, dx.
\]

\begin{proof}[Proof of Theorem~\ref{thm:asymp-P}]
This result is immediate from combining Equations~(\ref{eq:rho_1N})-(\ref{eq:change-of-var}) and observing that $\lim_{\varepsilon \searrow 0} \int_{\varepsilon}^{\rho_1} \log(f(x)+x-1) \, dx = \lim_{\varepsilon \searrow 0} \int_{\varepsilon}^{\rho_1} \log(x) \, dx = \rho_1 \log \rho_1 - \rho_1$.
\end{proof}

\subsection{Properties of $B_f$} Let $p \ge 1$ be any positive real number. By considering the $L^p$-norm of functions in $\mathcal{P}$, we make the class $\mathcal{P}$ a metric space. Let us denote this metric space by $(\mathcal{P}, L^p)$. A technical remark is that in the construction of $(\mathcal{P}, L^p)$, we identify any two functions $f, g \in \mathcal{P}$ for which $\|f-g\|_p = 0$. An element in $(\mathcal{P}, L^p)$ is an equivalence class of functions.

Nevertheless, it is easy to see that the map $f \mapsto B_f$ induces a well-defined map
\[
\boldsymbol{B}: (\mathcal{P}, L^p) \to \mathbb{R}.
\]
Let $(\mathbb{R}, \operatorname{Euclid})$ denote the set of real numbers equipped with the usual Euclidean metric. We have the following topological property of $\boldsymbol{B}$, when considered as a function from $(\mathcal{P}, L^p)$ to $(\mathbb{R}, \operatorname{Euclid})$.

\begin{proposition}\label{prop:B-discontinuous}
The map $\boldsymbol{B}:(\mathcal{P}, L^p) \to (\mathbb{R}, \operatorname{Euclid})$ is discontinuous everywhere on $\mathcal{P}$.
\end{proposition}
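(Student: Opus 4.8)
## Proof Proposal

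The plan is to show that at any $f \in \mathcal{P}$, one can find functions $g$ arbitrarily close to $f$ in the $L^p$-metric whose $B$-value differs from $B_f$ by a fixed amount bounded away from zero. The key mechanism exploits the improper nature of the integral defining $B_f$ near $x = 0$: the integrand $\log(f(x)+x-1)$ is integrable there precisely because $f(x) = 1$ on an initial segment $[0,\loft(f)]$, forcing $f(x)+x-1 = x$, and $\int_0^\varepsilon \log x\,dx$ converges. But a tiny $L^p$-perturbation that dips the function slightly below $1$ on a very short interval near $0$ can push $f(x)+x-1$ down to a value like $x^{\alpha}$ with $\alpha$ large, or even closer to $0$, inflating the contribution of $\log(f(x)+x-1)$ on that interval without costing much $L^p$-mass.

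The steps, in order, would be: first, fix $f \in \mathcal{P}$ and $\delta > 0$; we want $g \in \mathcal{P}$ with $\|f-g\|_p < \delta$ but $|B_g - B_f|$ at least some absolute constant (say $\ge 1$, or even unbounded). Second, construct $g$ explicitly: on a short interval $[\eta, 2\eta]$ with $\eta$ to be chosen tiny, replace the constant value $1$ by a steep linear dip down to a value like $1 - \eta + \eta^2$ (chosen to stay within $\mathcal{P}$ — in particular one must recheck $\lim_{x \searrow a} g(x) > 1-a$ and $g(1) > 0$, which hold since the dip is localized near $0$ and shallow), then return to the original graph. One must be slightly careful to keep $g$ weakly decreasing and piecewise linear with rational breakpoints; the breakpoints $\eta, 2\eta$ can be taken rational. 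Third, estimate $\|f - g\|_p$: since $|f - g| \le \eta$ and it is supported on an interval of length $\eta$, we get $\|f-g\|_p \le \eta \cdot \eta^{1/p} = \eta^{1+1/p} \to 0$, so for $\eta$ small enough this is below $\delta$. Fourth, estimate $B_g - B_f$: outside $[\eta, 2\eta]$ the integrands of $B_g$ and $B_f$ differ only on a controlled set and one shows this difference is small; on $[\eta, 2\eta]$ one has $g(x) + x - 1$ dropping to order $\eta^2$, so $\log(g(x)+x-1) \approx 2\log\eta$ there versus $\log(f(x)+x-1) = \log x \approx \log\eta$ for $f$, giving a contribution difference of order $\eta \cdot |\log\eta|$ — which is, unfortunately, small. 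So the naive dip is too weak; one must instead make the dip reach a value that is exponentially close to the line $X+Y=1$ relative to $\eta$, e.g. let $g$ touch down to $1 - \eta + e^{-1/\eta}$, so that $g(x)+x-1$ becomes as small as $e^{-1/\eta}$ on a sub-subinterval, contributing roughly $\eta \cdot (1/\eta) = 1$ (or more, by tuning) to the integral while $\|f-g\|_p$ remains $O(\eta^{1+1/p})$.

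The main obstacle is a genuine tension: we need the perturbation to be small in $L^p$ yet to create a logarithmic singularity strong enough to move $B$ by a fixed amount, all while keeping $g$ inside $\mathcal{P}$ — and the constraint $\lim_{x\searrow a} g(x) > 1-a$ (equivalently $g(x) + x - 1 > 0$ with a one-sided limit condition) forbids $g$ from actually meeting the line, so $B_g$ is always finite; the argument must produce a large but finite jump. Resolving this requires choosing the depth of the dip as a function of $\eta$ that goes to the line fast enough (like $e^{-1/\eta}$ or faster) that $\int_{\eta}^{2\eta} |\log(g(x)+x-1) - \log x|\,dx$ stays bounded below by a constant (in fact one can make it $\to \infty$, so $\boldsymbol B$ fails continuity badly), while the $L^p$-cost, governed only by the width $\eta$ and height $\le\eta$ of the dip and hence $O(\eta^{1+1/p})$, still tends to $0$. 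Once the bookkeeping on these two competing estimates is arranged, discontinuity at the arbitrary point $f$ follows, and since $f$ was arbitrary, $\boldsymbol B$ is discontinuous everywhere.
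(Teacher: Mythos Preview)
Your underlying mechanism --- push $g(x)+x-1$ down to a value exponentially close to $0$ on a short interval so that $\int\log(g(x)+x-1)\,dx$ is driven toward $-\infty$ while $\|f-g\|_p\to 0$ --- is exactly the paper's mechanism. But the construction as written does not produce a function in $\mathcal P$. Functions in $\mathcal P$ must be \emph{weakly decreasing}, and near $x=0$ we have $f\equiv 1$ on all of $[0,\loft(f)]$; so once $g$ dips below $1$ on $[\eta,2\eta]$ it cannot ``return to the original graph'', since that would force $g$ to climb back up to the value $1$. Any legal $g\in\mathcal P$ that dips near $0$ must therefore stay at or below the dip value on the whole interval $[2\eta,\loft(f)]$ (and possibly further), so $f-g$ is supported on a set of width at least $\loft(f)-2\eta$, not $\eta$. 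Your estimate $\|f-g\|_p=O(\eta^{1+1/p})$, which rests on width $\eta$, collapses for the same reason.

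The paper sidesteps this by perturbing at the other end of $[0,1]$: for large $n$ it takes $g_n(x)=f(x)$ on $[0,1-\tfrac1n]$ and $g_n(x)=1-x+2^{-n^2}$ on $(1-\tfrac1n,1]$. There is no ``afterwards'' beyond $x=1$, so weak monotonicity reduces to a single downward-jump check at $1-\tfrac1n$, handled by the hypothesis $n^{-1}+2^{-n^2}<\loft(f)$ together with Proposition~\ref{prop:basic-loft}. Then $\|f-g_n\|_p\le (1/n)^{1/p}\to 0$, while
\[
\int_{1-1/n}^{1}\log\bigl(g_n(x)+x-1\bigr)\,dx=\frac{1}{n}\log\bigl(2^{-n^2}\bigr)=-n\log 2\to -\infty,
\]
forcing $|B_{g_n}|\to\infty$. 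Your near-$0$ idea can in fact be repaired (let the dip persist as a plateau rather than return, and accept $\|f-g\|_p=O(\eta)$ from height $O(\eta)$ on width $O(1)$, which still tends to $0$), but this is strictly more bookkeeping than the paper's construction for no gain.
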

\begin{proof}
Let $f \in \mathcal{P}$ be an arbitrary function. For each positive integer $n$ such that $n^{-1} + 2^{-n^2} < \loft(f)$, define
\[
g_n(x) := \begin{cases}
f(x) & \text{ if } x \le 1 - \frac{1}{n}, \\
1 - x + 2^{-n^2} & \text{ if } x > 1 - \frac{1}{n}.
\end{cases}
\]
Observe that $g_n$ converges to $f$ in $L^p$, as $n \to \infty$. However, by the triangle inequality, we have
\[
|B_{g_n}| = \left| \int_0^1 \log(g_n(x) + x - 1) \, dx \right| \ge |I_1| - |I_2| - |I_3|,
\]
where
\[
I_1 := \int_{1-\frac{1}{n}}^1 \log(g_n(x) + x - 1) \, dx,
\]
\[
I_2 := \int_0^1 \log(f(x) + x - 1) \, dx,
\]
and
\[
I_3 := \int_{1-\frac{1}{n}}^1 \log(f(x) + x - 1) \, dx.
\]
Note that $|I_1| = (\log 2) \cdot n$, $|I_2| = |B_f|$, and $|I_3| \to 0$, as $n \to \infty$. Hence, $\{g_n\}$ is a sequence of functions converging to $f$ in $L^p$, but $|B_{g_n}| \to \infty$, as $n \to \infty$.
\end{proof}

The following proposition is clear from the integral formula of $B_f$.

\begin{proposition}\label{prop:from-int-form-B}
\begin{itemize}
\item[(a)] For every function $f \in \mathcal{P}$, we have $B_f \le -1$. The upper bound is tight. The equality is attained if and only if $f(x) = 1$ for every $x \in [0,1)$.
\item[(b)] For any functions $f, g \in \mathcal{P}$ such that $\forall x \in [0,1]$, $f(x) \ge g(x)$, we have $B_f \ge B_g$.
\end{itemize} 
\end{proposition}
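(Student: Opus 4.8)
The plan is to read off both parts directly from the integral formula $B_f = \int_0^1 \log(f(x)+x-1)\,dx$, using only the pointwise estimates already established for functions in $\mathcal{P}$. Part (b) will be essentially immediate: if $f(x) \ge g(x)$ for all $x \in [0,1]$, then $f(x)+x-1 \ge g(x)+x-1$, and both sides are strictly positive by Proposition~\ref{prop:basic-loft}(d) applied to $f$ and to $g$; since $\log$ is increasing, integrating the pointwise inequality $\log(f(x)+x-1) \ge \log(g(x)+x-1)$ over $(0,1]$ gives $B_f \ge B_g$. The improper integrals at $0$ cause no trouble, since each of $f$ and $g$ is identically $1$ near $0$ by Proposition~\ref{prop:basic-loft}(b), where the integrand reduces to $\log x$, which is integrable on $(0,\varepsilon]$.

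For part (a), I would first invoke Proposition~\ref{prop:basic-loft}(d), which gives $0 < f(x)+x-1 \le x$ for every $x \in (0,1]$. Monotonicity of $\log$ together with integration then yields $B_f \le \int_0^1 \log x\, dx = -1$, proving the bound. For the characterization of equality, the ``if'' direction is a direct computation: when $f(x) = 1$ on all of $[0,1)$, the integrand $\log(f(x)+x-1)$ coincides with $\log x$ except possibly at the single point $x = 1$, so $B_f = \int_0^1 \log x\,dx = -1$.

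The only step needing genuine care is the converse. If $B_f = -1$, then $\int_0^1 \bigl(\log x - \log(f(x)+x-1)\bigr)\,dx = 0$ where, by Proposition~\ref{prop:basic-loft}(d), the integrand is non-negative; hence it vanishes for almost every $x \in (0,1)$, which is to say $f(x) = 1$ for a.e. $x$. To upgrade this to $f(x) = 1$ for every $x \in [0,1)$, I would use that $f$ is weakly decreasing: the set $\{x \in [0,1) : f(x) < 1\}$ is then an ``up-set,'' i.e. an interval with right endpoint $1$, and any nonempty such interval has positive Lebesgue measure — a contradiction unless the set is empty. This is the one place where membership in $\mathcal{P}$ (specifically weak monotonicity, rather than merely being an $L^p$ equivalence class) is essential, and it is the only real obstacle; the rest is routine bookkeeping around the improper integral at $0$.
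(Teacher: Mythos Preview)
Your proposal is correct and is precisely the argument the paper has in mind: the paper gives no proof of this proposition, simply declaring it ``clear from the integral formula of $B_f$,'' and your write-up fills in exactly those details. Your handling of the equality case in (a) via weak monotonicity to upgrade from a.e.\ to everywhere on $[0,1)$ is the right move and the only point requiring any care.
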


\bigskip

\section{The class $\widetilde{\mathcal{P}}$ of piecewise linear functions}\label{s:tilde-P}
For each positive integer $k$, define $\widetilde{\mathcal{P}}_k$ to be the class of functions $f \in \mathcal{P}$ which satisfy the following additional properties:
\begin{itemize}
\item for each $i \in [k]$, we have $k \cdot f(i/k) \in \mathbb{Z}$,
\item $f$ is upper-semicontinuous, and
\item for each $i \in [k]$, the restriction $f|_{((i-1)/k, i/k)}$ is linear with a non-positive integer slope.
\end{itemize}

We let $\widetilde{\mathcal{P}} := \bigcup_{k=1}^{\infty} \widetilde{\mathcal{P}}_k$. One important property about function $f \in \widetilde{\mathcal{P}}$ is that for every $a \in (0,1]$, we have $\lim_{x \nearrow a} f(x) = f(a)$.

\subsection{Bijective Combinatorics in $\widetilde{\mathcal{P}}_k$}\label{ss:bijective-combin} It is easy to see that $\widetilde{\mathcal{P}}_k$ is a finite set. In fact, its size has a nice product formula.

\begin{proposition}\label{prop:bij-P-T-G}
For each positive integer $k \ge 2$, the sizes of the following sets are equal:
\begin{itemize}
\item the class $\widetilde{\mathcal{P}}_k$,
\item the set $\mathcal{T}_k$ of $(2k-3)$-tuples $\left(y_1, y_2, \ldots, y_{2k-3}\right)$ of non-negative integers such that for each $m \in [2k-3]$, we have
\[
y_1 + y_2 + \cdots + y_m \le \left\lceil \frac{m}{2} \right\rceil,
\]

\item the set $\mathcal{G}_k$ of lattice paths from $(0,0)$ to $(2k-1,k-1)$ which are contained in the half-plane $\{(x,y) : x \ge 2y\}$.
\end{itemize}
\end{proposition}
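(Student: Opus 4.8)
The plan is to establish, for each $k \ge 2$, explicit bijections $\widetilde{\mathcal{P}}_k \leftrightarrow \mathcal{T}_k$ and $\mathcal{T}_k \leftrightarrow \mathcal{G}_k$; each will identify the set with the common ``master set'' $\mathcal{S}_k := \{(p_1,\dots,p_{2k-3}) \in \mathbb{Z}_{\ge 0}^{2k-3} : p_1 \le p_2 \le \cdots \le p_{2k-3} \text{ and } p_j \le \lceil j/2 \rceil \text{ for every } j\}$ of weakly increasing, suitably capped integer sequences.

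For $\widetilde{\mathcal{P}}_k \leftrightarrow \mathcal{T}_k$ I first put each $f \in \widetilde{\mathcal{P}}_k$ into a normal form. Set $a_i := k f(i/k) \in \mathbb{Z}$ for $0 \le i \le k$. Since $f \equiv 1$ on a neighborhood of $0$ and is linear on $(0,1/k)$, the first piece is the constant $1$, so $a_0 = a_1 = k$; and upper-semicontinuity with weak monotonicity forces $f(i/k) = \lim_{x \nearrow i/k} f(x)$ at each grid point. Hence $f$ is reconstructed from the integers $a_2,\dots,a_k$ together with the nonnegative integer slopes $s_2,\dots,s_k$ of its linear pieces, or equivalently from the slopes $s_i$ and the jump sizes $u_i := a_{i-1}-a_i-s_i$ of the downward jump of $f$ at $(i-1)/k$. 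The remaining clauses of the definitions of $\mathcal{P}$ and $\widetilde{\mathcal{P}}_k$ translate into: $s_i,u_i \ge 0$; $u_2 = 0$; $\sum_{l=2}^{i}(s_l+u_l) \le i-1$ for $2 \le i \le k$ (this is $a_i \ge k+1-i$, i.e.\ $\lambda(f,k) \in \cD_k$); and $\sum_{l=2}^{i-1}(s_l+u_l)+u_i \le i-2$ for $2 \le i \le k$ (this is $a_i+s_i \ge k+2-i$, the condition $\lim_{x \searrow (i-1)/k} f(x) > 1-(i-1)/k$, whose case $i = 2$ is what pins down $u_2 = 0$). Dropping $u_2$ and interleaving the remaining $2k-3$ quantities as $\mathbf{y} := (s_2,\ u_3,s_3,\ u_4,s_4,\ \dots,\ u_k,s_k)$, a short computation gives $y_1+\cdots+y_{2j-1} = k-a_{j+1}$ and $y_1+\cdots+y_{2j} = k-a_{j+2}-s_{j+2}$, so the two families of inequalities become precisely $y_1+\cdots+y_m \le \lceil m/2 \rceil$, that is $\mathbf{y} \in \mathcal{T}_k$. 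The inverse map sets $u_2 = 0$, $s_2 = y_1$, reads off $u_{j+2}, s_{j+2}$ from the interleaving, rebuilds the $a_i$, and hence $f$; one checks the reconstructed $f$ indeed lies in $\widetilde{\mathcal{P}}_k$. This gives the bijection (and, via partial sums, shows $\mathcal{T}_k \leftrightarrow \mathcal{S}_k$).

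For $\mathcal{T}_k \leftrightarrow \mathcal{G}_k$ I use that a path $P \in \mathcal{G}_k$ is determined by its column-exit heights $h_x := \max\{y : (x,y) \in P\}$ for $0 \le x \le 2k-1$: inside the vertical line $x$ the path rises from $h_{x-1}$ to $h_x$ and then takes a right step. In these terms ``$P$ lies in $\{x \ge 2y\}$'' is exactly ``$h_x \le \lfloor x/2 \rfloor$ for all $x$'', the start forces $h_0 = h_1 = 0$, the endpoint forces $h_{2k-1} = k-1$, and the cap $h_{2k-2} \le \lfloor (2k-2)/2 \rfloor = k-1$ makes the sequence automatically non-decreasing through the last step; thus $P \mapsto (h_2,\dots,h_{2k-2})$ identifies $\mathcal{G}_k$ with weakly increasing sequences $0 \le h_2 \le \cdots \le h_{2k-2}$ satisfying $h_x \le \lfloor x/2 \rfloor$. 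Reindexing $p_j := h_{j+1}$ and using $\lfloor (j+1)/2 \rfloor = \lceil j/2 \rceil$ identifies this with $\mathcal{S}_k$. Composing with the partial-sum identification $\mathcal{T}_k \leftrightarrow \mathcal{S}_k$, the tuple $\mathbf{y} \in \mathcal{T}_k$ corresponds to the path with exactly $y_m$ up-steps at horizontal coordinate $m+1$ (for $1 \le m \le 2k-3$) and the remaining $k-1-\sum_m y_m$ up-steps at horizontal coordinate $2k-1$.

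The step I expect to demand the most care is the normal-form reduction for the first bijection: checking that each clause of the rather elaborate definitions of $\mathcal{P}$ and $\widetilde{\mathcal{P}}_k$ corresponds to exactly the claimed inequality — in particular that the condition $\lim_{x \searrow a} f(x) > 1-a$ need only be verified at the grid points $a = i/k$ (on each open linear piece, $f(a)-(1-a)$ is affine and, given the grid-point inequalities, positive at both endpoints, hence positive throughout), and that the forced relation $u_2 = 0$ is isolated correctly. One must also align the interleaving and partial sums so the caps come out as $\lceil m/2 \rceil$ rather than off by one; the column-height encoding for $\mathcal{G}_k$ is then essentially mechanical. I would sanity-check $k = 2$ (all three sets have size $2$) and $k = 3$ (all of size $7$) against $\frac{1}{k}\binom{3k-2}{k-1}$, although that closed form is presumably obtained afterwards from the $\mathcal{G}_k$ description via a cycle-lemma argument.
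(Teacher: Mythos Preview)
Your proposal is correct and is essentially the paper's proof with the verifications filled in. Your interleaving $\mathbf{y}=(s_2,u_3,s_3,\dots,u_k,s_k)$ is exactly the paper's map $\widetilde{\mathcal{P}}_k\to\mathcal{T}_k$ (their $y_{2h-1}$ is your $s_{h+1}$ and their $y_{2\ell}$ is your $u_{\ell+2}$), and your path with $y_m$ up-steps at abscissa $m+1$ is exactly the paper's word $E^2 N^{y_1} E N^{y_2}\cdots E N^{y_{2k-3}} E N^{k-1-\sum y_m}$; the only cosmetic difference is that you factor both bijections through the auxiliary partial-sum set $\mathcal{S}_k$, which the paper leaves implicit.
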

\begin{proof} We construct the following bijections.

First, $\widetilde{\mathcal{P}}_k \to \mathcal{T}_k$. Given $f \in \widetilde{\mathcal{P}}_k$, we define $(y_1, y_2, \ldots, y_{2k-3})$ as follows. For each $h$ such that $1 \le h \le k-1$, let
\[
y_{2h-1} := k \cdot \left( \lim_{x \searrow h/k} f(x) - f\!\left( \frac{h+1}{k} \right)\right),
\]
and for each $\ell$ such that $1 \le \ell \le k-2$, let
\[
y_{2\ell} := k \cdot \left( f\!\left( \frac{\ell+1}{k} \right) - \lim_{x \searrow (\ell+1)/k} f(x) \right).
\]

Second, $\mathcal{T}_k \to \mathcal{G}_k$. Send the tuple $(y_1, \ldots, y_{2k-3}) \in \mathcal{T}_k$ to the path
\[
E^2 N^{y_1} E N^{y_2} E N^{y_3} \cdots E N^{y_{2k-3}} E N^{k-1-y_1-y_2-\cdots-y_{2k-3}},
\]
where $E$ denotes the step $(1,0)$ and $N$ denotes the step $(0,1)$.
\end{proof}

\begin{corollary}\label{cor:wt-P_k-count}
For every positive integer $k$, we have
\[
|\widetilde{\mathcal{P}}_k| = \frac{1}{k} \binom{3k-2}{k-1}.
\]
\end{corollary}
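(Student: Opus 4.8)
The plan is to pass to the lattice-path model $\mathcal{G}_k$ supplied by Proposition~\ref{prop:bij-P-T-G} and then to count by the cycle lemma. The case $k = 1$ is degenerate and I would dispatch it by hand: the requirement that $f$ be identically $1$ on some interval $[0,\varepsilon)$ forces the single linear piece of $f$ on $(0,1)$ to be constant $1$, and then upper semicontinuity at $1$ together with $f(1) \le 1$ forces $f(1) = 1$, so $\widetilde{\mathcal{P}}_1$ consists of the constant function $1$ alone and $|\widetilde{\mathcal{P}}_1| = 1 = \tfrac11\binom{1}{0}$. So assume $k \ge 2$; by Proposition~\ref{prop:bij-P-T-G} it suffices to prove $|\mathcal{G}_k| = \tfrac1k\binom{3k-2}{k-1}$.

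Next I would rephrase membership in $\mathcal{G}_k$ as a condition on partial sums. Encode the step $E = (1,0)$ by the integer $+1$ and the step $N = (0,1)$ by $-2$; then along a lattice path from $(0,0)$ to $(2k-1,k-1)$ the partial sum after $j$ steps is exactly $x - 2y$ at the $j$-th vertex, so the path lies in $\{x \ge 2y\}$ if and only if all of its partial sums are $\ge 0$. Hence $|\mathcal{G}_k|$ is the number of words in the alphabet $\{+1,-2\}$ using $2k-1$ letters $+1$ and $k-1$ letters $-2$ whose partial sums are all $\ge 0$. Prepending one extra letter $+1$ gives a bijection between these words and words of length $3k-1$ using $2k$ letters $+1$ and $k-1$ letters $-2$ whose partial sums are all $\ge 1$: the added letter raises every partial sum by $1$ so that they become strictly positive, and conversely a word with all partial sums $\ge 1$ necessarily begins with $+1$, which one then deletes to recover the original word.

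Now the cycle lemma of Dvoretzky and Motzkin applies: each letter is $\le 1$ and the total sum is $2k - 2(k-1) = 2 > 0$, so among the $3k-1$ cyclic rotations of any fixed arrangement of $2k$ copies of $+1$ and $k-1$ copies of $-2$, exactly two have all partial sums positive. Double counting over all $\binom{3k-1}{k-1}$ arrangements then shows that the number of words with all partial sums positive equals $\tfrac{2}{3k-1}\binom{3k-1}{k-1}$, and an elementary factorial rearrangement, $\tfrac{2}{3k-1}\binom{3k-1}{k-1} = \tfrac{2(3k-2)!}{(k-1)!\,(2k)!} = \tfrac{(3k-2)!}{k!\,(2k-1)!} = \tfrac1k\binom{3k-2}{k-1}$, gives the claimed value of $|\mathcal{G}_k|$, hence of $|\widetilde{\mathcal{P}}_k|$.

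The only delicate point I anticipate is the cycle-lemma bookkeeping: one must shift by precisely one $+1$ step so that the weak inequality ``$\ge 0$'' turns into the strict inequality that the cycle lemma requires, and one must phrase the count so that arrangements invariant under a nontrivial rotation cause no trouble — they do not, because the double count is run over all $\binom{3k-1}{k-1}$ arrangements simultaneously rather than one at a time. The remaining steps (the $k = 1$ check, the $\{+1,-2\}$ encoding of steps, and the factorial simplification) are routine.
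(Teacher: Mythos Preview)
Your proof is correct and follows the same reduction as the paper: both invoke Proposition~\ref{prop:bij-P-T-G} to pass to the lattice-path set $\mathcal{G}_k$, and you additionally dispose of the case $k=1$ (which that proposition does not cover) by hand. Where the paper then simply cites the literature (Gessel--Xin and OEIS \texttt{A006013}) for $|\mathcal{G}_k| = \tfrac{1}{k}\binom{3k-2}{k-1}$, you supply a self-contained argument via the $\{+1,-2\}$ encoding and the Dvoretzky--Motzkin cycle lemma; this is exactly the kind of standard computation those references contain, and your bookkeeping (the prepended $+1$ to convert weak to strict positivity, and the double count over all $\binom{3k-1}{k-1}$ arrangements to sidestep any worry about cyclic symmetry) is handled correctly.
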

Note that this sequence appears as \verb!A006013! on the OEIS \cite{OEIS}.
\begin{proof}[Proof of Corollary~\ref{cor:wt-P_k-count}]
With Proposition~\ref{prop:bij-P-T-G}, it suffices to show that $|\mathcal{G}_k| = \frac{1}{k} \binom{3k-2}{k-1}$. This is well-known: see, for example, the sequence $\{b_n\}$ in the work of Gessel and Xin \cite[Section~3]{GX06}, or references in \verb!A006013! on the OEIS \cite{OEIS}.
\end{proof}

In the following discussion, by a {\em lattice path}, we mean the image of an injective continuous function $[0,1] \to \mathbb{R}^2$ (under the usual Euclidean topology for both spaces) that is also a finite union of segments $s_1, \ldots, s_n$ such that both end points of each $s_i$ are lattice points.

Functions in $\widetilde{\mathcal{P}}_k$ can be seen as lattice paths, by dilating their graphs and then adding vertical segments. Formally, suppose a function $f \in \widetilde{\mathcal{P}}_k$ is given. We first dilate the graph of $f$ into the square $[0,k] \times [0,k]$ by
\[
\Gamma_f := \left\{ (x,y) \in \mathbb{R} \times \mathbb{R} : \frac{y}{k} = f \! \left( \frac{x}{k} \right) \right\} \subseteq [0,k] \times [0,k].
\]
Then, we take the closure $\overline{\Gamma}_f$ of $\Gamma_f$ with respect to the usual Euclidean topology on $\mathbb{R}^2$. Note that the closure simply adds a finite number of points into the set $\Gamma_f$. Then, our path $\Path(f)$ is given by
\[
\Path(f) := \left\{ (x,y) \in \mathbb{R}^2 \, | \, \text{there exist } y_1, y_2 \text{ such that } y_1 \le y \le y_2 \text{ and } (x,y_1), (x,y_2) \in \overline{\Gamma}_f \right\}.
\]
For any $f \in \widetilde{\mathcal{P}}_k$, the path $\Path(f)$ is a lattice path with endpoints $(0,k)$ and $(k,0)$. Let us define $\mathcal{L}_k := \Path(\widetilde{\mathcal{P}}_k)$.

The map $\Path: \widetilde{\mathcal{P}}_k \to \mathcal{L}_k$ is a bijection. To go back from lattice paths to functions, consider the map $\Func: \mathcal{L}_k \to \widetilde{\mathcal{P}}_k$ given by
\[
\left(\Func(\gamma)\right)(x) := \sup \left\{ y : (kx, ky) \in \gamma \right\},
\]
for any $x \in [0,1]$, for any $\gamma \in \mathcal{L}_k$. This map simply shrinks the path back and then removes vertical segments. It is straightforward to see that $\Path$ and $\Func$ are inverses.

If we think of paths in $\mathcal{L}_k$ as going from $(0,k)$ to $(k,0)$, then they are exactly the lattice paths $\gamma$ with the following properties:
\begin{itemize}
\item the path $\gamma$ starts at $(0,k)$ and ends at $(k,0)$,
\item each step is either $(1,-\ell)$ for $\ell \in \mathbb{Z}_{\ge 0}$ or $(0,-1)$,
\item the path $\gamma$ intersects with the diagonal $X+Y = k$ exactly at its two endpoints.
\end{itemize}

This class $\mathcal{L}_k$ of lattice paths contains many familiar paths in algebraic combinatorics such as Dyck paths and Motzkin paths. It is also closely related to plane $S$-trees, parenthesizations, and dissections of a convex polygon. See Stanley's text \cite[Chapter~6]{EC2} for details.

For the following discussion, a {\em Dyck path} from $(0,k)$ to $(k,0)$ is a lattice path starting from $(0,k)$, using steps $(1,0)$ and $(0,-1)$, and ending at $(k,0)$ that never crosses (but might touch) the line $X+Y = k$. We let $\Dyck(k)$ denote the set of Dyck paths from $(0,k)$ to $(k,0)$. 

Since paths in $\mathcal{L}_k$ can intersect with the line $X+Y = k$ only at the two endpoints $(0,k)$ and $(k,0)$, the Dyck paths in $\mathcal{L}_k$ are in bijection with the Dyck paths from $(1,k)$ to $(k,1)$. Note that Dyck paths in $\mathcal{L}_k$ correspond (under $\Func: \mathcal{L}_k \to \widetilde{\mathcal{P}}_k$) to piecewise constant functions in $\widetilde{\mathcal{P}}_k$. We have thus obtained one trivial embedding of a Catalan-numerous family into $\widetilde{\mathcal{P}}$.

\begin{proposition}
For each positive integer $k$, the number of piecewise constant functions in $\widetilde{\mathcal{P}}_k$ is exactly the $(k-1)^{\text{st}}$ Catalan number
\[
C_{k-1} = \frac{1}{k} \binom{2k-2}{k-1}.
\]
\end{proposition}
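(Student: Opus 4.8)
The plan is to read off the piecewise constant functions in $\widetilde{\mathcal{P}}_k$ through the bijection $\Path:\widetilde{\mathcal{P}}_k \to \mathcal{L}_k$ and then invoke the classical Catalan enumeration. Recall that for $f \in \widetilde{\mathcal{P}}_k$ each restriction $f|_{((i-1)/k,\,i/k)}$ is linear with a non-positive integer slope, and $f$ is piecewise constant precisely when all $k$ of these slopes vanish. First I would verify that $f \in \widetilde{\mathcal{P}}_k$ is piecewise constant if and only if $\Path(f)$ uses only the steps $(1,0)$ and $(0,-1)$, that is, $\Path(f) \in \Dyck(k) \cap \mathcal{L}_k$. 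If all slopes are $0$, then on each strip $x \in (i-1,i)$ the dilated graph $\Gamma_f$ is a horizontal segment whose height is an integer (by the integrality condition $k\cdot f(i/k)\in\mathbb{Z}$ together with the upper-semicontinuity of $f$), so after taking the closure $\overline{\Gamma}_f$ and filling in the vertical segments, $\Path(f)$ is a staircase assembled from unit horizontal and unit vertical steps, hence an element of $\Dyck(k)$; conversely, if $\gamma \in \mathcal{L}_k$ uses only the steps $(1,0)$ and $(0,-1)$, its horizontal portions sit at constant heights, so $\Func(\gamma)$ is constant on each of the $k$ subintervals. Since $\Path$ and $\Func$ are mutually inverse, this exhibits the piecewise constant functions in $\widetilde{\mathcal{P}}_k$ as being in bijection with $\Dyck(k) \cap \mathcal{L}_k$.

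It then remains to count $\Dyck(k) \cap \mathcal{L}_k$, the Dyck paths that lie in $\mathcal{L}_k$. Such a path meets the diagonal $X+Y=k$ only at its endpoints $(0,k)$ and $(k,0)$, so its first step is forced to be $(1,0)$ and its last step is forced to be $(0,-1)$; deleting these two steps gives the bijection (already noted in the discussion above) onto the Dyck paths from $(1,k)$ to $(k,1)$. The latter are lattice paths with $k-1$ right steps and $k-1$ down steps staying weakly on one side of the line through their two endpoints, and translating to the origin identifies them with the Dyck paths of semilength $k-1$. Their number is the Catalan number $C_{k-1}=\frac{1}{k}\binom{2k-2}{k-1}$, which is exactly the claimed count. (For $k=1$ the statement is trivial, since then $\widetilde{\mathcal{P}}_1$ consists of the single constant function $1$ and $C_0=1$.)

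The only point requiring care is the equivalence in the first paragraph: one must check that the full list of conditions defining $\widetilde{\mathcal{P}}_k$ — weak monotonicity, upper-semicontinuity, integrality of $k f(i/k)$, the integer-slope condition, and the ambient $\mathcal{P}$-conditions (identically $1$ near $0$, $f(1)>0$, and $\lim_{x\searrow a} f(x)>1-a$) — match up exactly with the three defining properties of paths in $\mathcal{L}_k$ specialized to unit steps, so that the restriction of $\Path$ really is a bijection onto $\Dyck(k)\cap\mathcal{L}_k$ rather than merely an injection. This is a routine unwinding of definitions, essentially contained in the discussion preceding the statement, and the final enumeration is the textbook Catalan count, so no substantive obstacle remains.
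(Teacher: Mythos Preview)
Your proposal is correct and follows essentially the same approach as the paper: the paper's justification (given in the paragraph immediately preceding the proposition rather than in a formal proof block) is precisely that piecewise constant functions correspond under $\Func$/$\Path$ to the Dyck paths in $\mathcal{L}_k$, which are in bijection with Dyck paths from $(1,k)$ to $(k,1)$ and hence counted by $C_{k-1}$. Your write-up is more detailed in verifying the equivalence between ``piecewise constant'' and ``only unit steps,'' but the route is identical.
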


Similarly, we have a Motzkin-numerous class of functions in $\widetilde{\mathcal{P}}$ as follows.

\begin{proposition}
For each positive integer $k \ge 2$, define the subset $\mathcal{MO}_k \subseteq \widetilde{\mathcal{P}}_k$ to be the class of all functions $f \in \widetilde{\mathcal{P}}_k$ which satisfy the following conditions:
\begin{itemize}
\item each linear piece of $f$ either is constant or has slope $-1$,
\item for each non-differentiable point $a \in (0,1)$ of $f$, we have
\[
k \cdot f(a) \equiv k \cdot \lim_{x \searrow a} f(x) \equiv k \cdot (1 - a) \pmod{2},
\]
\item the number $k \cdot f(1)$ is an even integer.
\end{itemize}
Then, the size of $\mathcal{MO}_k$ is the $(k-2)^{\text{nd}}$ Motzkin number $M_{k-2}$. (For more details about the Motzkin numbers, we recommend Stanley's text \cite[Exercises~6.37~and~6.38]{EC2}.)
\end{proposition}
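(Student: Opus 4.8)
The plan is to construct an explicit bijection between $\mathcal{MO}_k$ and the set of Motzkin paths of length $k-2$ --- lattice paths from $(0,0)$ to $(k-2,0)$ with steps $(1,1)$, $(1,0)$, $(1,-1)$ that stay weakly above the $x$-axis, of which there are exactly $M_{k-2}$ (cf.\ \cite[Exercises~6.37~and~6.38]{EC2}). Everything is routed through the path representation $\Path:\widetilde{\mathcal{P}}_k \to \mathcal{L}_k$. Since every linear piece of $f \in \mathcal{MO}_k$ is constant or has slope $-1$, the lattice path $\Path(f)$ uses only the unit steps $(1,0)$ and $(1,-1)$ (from the linear pieces) and $(0,-1)$ (from the downward jumps of $f$ and from the final descent to $(k,0)$). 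I would then introduce on such a path the coordinate $v := X + Y - k$, the signed distance to the anti-diagonal $X+Y = k$: the steps $(1,0)$, $(1,-1)$, $(0,-1)$ change $v$ by $+1$, $0$, $-1$ respectively, $v \ge 0$ throughout with $v = 0$ only at the two endpoints, and at an integer abscissa $X = i$ the value of $v$ at the top of the vertical segment of $\Path(f)$ over $X=i$ is $k f(i/k) + i - k = k\big(f(i/k) + \tfrac{i}{k} - 1\big)$. Reading off the three parity conditions defining $\mathcal{MO}_k$, one finds they are equivalent to the single statement that $v$ is even at every corner of $\Path(f)$, where ``corner'' includes both the top and the bottom of each maximal vertical run (these correspond, at a jump, to the two quantities $kf(a)$ and $k\lim_{x\searrow a}f(x)$) as well as the junction at $X=k$ between the graph and the final descent (corresponding to $kf(1)$).

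Next I would convert ``$v$ even at every corner'' into a statement about run lengths. Because $\Path(f)$ begins at $(0,k)$, where $v=0$, and may meet $v=0$ only at its endpoints, its first maximal run must consist of $(1,0)$-steps (a $(0,-1)$-run would push $v$ negative, a $(1,-1)$-run would keep $v=0$ at a non-endpoint) and, symmetrically, its last maximal run must consist of $(0,-1)$-steps. Scanning the maximal runs from left to right --- a run of $(1,0)$-steps, resp.\ of $(0,-1)$-steps, changes $v$ by plus, resp.\ minus, its length, while a run of $(1,-1)$-steps leaves $v$ fixed --- one sees by induction that $v$ is even at every corner if and only if every maximal run of $(1,0)$-steps and every maximal run of $(0,-1)$-steps has even length. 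Thus $\Path$ identifies $\mathcal{MO}_k$ with the set $\mathcal{Q}_k$ of paths in $\mathcal{L}_k$ that use only the steps $(1,0)$, $(1,-1)$, $(0,-1)$ and all of whose maximal horizontal runs and maximal vertical runs have even length.

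Finally I would contract. Send a path in $\mathcal{Q}_k$ to the word obtained by replacing each consecutive pair $(1,0)(1,0)$ inside a horizontal run by an up-step $\widehat{U} = (1,1)$, each $(1,-1)$ by a level-step $\widehat{F} = (1,0)$, and each consecutive pair $(0,-1)(0,-1)$ inside a vertical run by a down-step $\widehat{D} = (1,-1)$. Using that any lattice path from $(0,k)$ to $(k,0)$ with steps $(1,0)$, $(1,-1)$, $(0,-1)$ has equally many $(1,0)$-steps and $(0,-1)$-steps, this word has exactly $k$ letters, begins with $\widehat{U}$ and ends with $\widehat{D}$ (the forced first and last runs), stays weakly above height $0$, and returns to height $0$ only at its two ends --- indeed $v$, which equals twice the height at each inter-run vertex, vanishes only at the endpoints of the original path. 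Deleting the forced initial $\widehat{U}$ and final $\widehat{D}$ produces a path that stays weakly above height $1$, i.e.\ (after translating down by $1$) a Motzkin path of length $k-2$. The inverse prepends $\widehat{U}$ and appends $\widehat{D}$ to a Motzkin path of length $k-2$ translated up by $1$, then expands $\widehat{U}\mapsto (1,0)(1,0)$, $\widehat{F}\mapsto (1,-1)$, $\widehat{D}\mapsto (0,-1)(0,-1)$; the resulting path lies in $\mathcal{Q}_k$ because its endpoints are $(0,k)$ and $(k,0)$, its non-vertical steps are $(1,0)$ and $(1,-1)$ as required for membership in $\mathcal{L}_k$, and it meets the anti-diagonal only at its endpoints --- each following directly from the analogous property of the padded Motzkin path. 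The two maps are visibly mutually inverse, so $|\mathcal{MO}_k| = M_{k-2}$.

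The step I expect to be the real work is the second one: pinning down that the three parity conditions are equivalent to all maximal horizontal and vertical runs of $\Path(f)$ having even length. One must be careful that a jump of $f$ at an interior point contributes \emph{two} corners, matching the two congruences $kf(a)\equiv k(1-a)$ and $k\lim_{x\searrow a}f(x)\equiv k(1-a)\pmod 2$; that the abscissa $X = k$ is a corner not covered by the ``$a\in(0,1)$'' clause and is instead governed by the separate condition that $kf(1)$ be even; and that the forced first horizontal run and last vertical run are automatically of even length, so that the contraction in the third step is well defined. Once this dictionary between $\mathcal{MO}_k$ and $\mathcal{Q}_k$ is in hand, the passage to Motzkin paths and back is routine.
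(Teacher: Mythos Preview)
The paper states this proposition without proof, so there is nothing explicit to compare against; your argument is correct and is precisely the route the paper's lattice-path framework $\Path:\widetilde{\mathcal{P}}_k\to\mathcal{L}_k$ invites. The slope restriction forces the steps $(1,0)$, $(1,-1)$, $(0,-1)$; your translation of the two parity clauses and the condition on $kf(1)$ into ``$v=X+Y-k$ is even at every corner of $\Path(f)$,'' and thence into ``every maximal horizontal and vertical run has even length,'' is exactly right (and your care about the two corners at a jump and the separate corner at $X=k$ is on point). The contraction $(1,0)(1,0)\mapsto\widehat U$, $(1,-1)\mapsto\widehat F$, $(0,-1)(0,-1)\mapsto\widehat D$ then gives the bijection with elevated Motzkin paths of length $k$, hence with Motzkin paths of length $k-2$.

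One phrasing to tighten: when you write that the forced first horizontal run and last vertical run are ``automatically of even length,'' this is not automatic from membership in $\mathcal{L}_k$ alone---it is a consequence of the parity dictionary you have just set up (the first non-differentiable point $a_1$ forces $ka_1$ even, and the hypothesis on $kf(1)$ handles the final vertical run). You clearly intend this, but saying so explicitly removes any appearance of circularity.
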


If we drop the last two conditions about parity, we obtain Schr\"{o}der numbers.

\begin{proposition}
For each positive integer $k$, define the subset $\mathcal{SC}_k \subseteq \widetilde{\mathcal{P}}_k$ to be the class of all functions $f \in \widetilde{\mathcal{P}}_k$ such that each linear piece of $f$ either is constant or has slope $-1$. Then, the size of $\mathcal{SC}_k$ is the $(k-1)^{\text{st}}$ Schr\"{o}der number $r_{k-1}$. (For more details about the Schr\"{o}der numbers, we recommend Stanley's text \cite[Section~6.2~and~Exercises~6.39]{EC2}.)
\end{proposition}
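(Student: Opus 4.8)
The plan is to transport the statement to a problem about lattice paths via the bijection $\Path: \widetilde{\mathcal{P}}_k \to \mathcal{L}_k$ constructed above, and then to recognize the relevant subfamily as a classical lattice-path model for the Schr\"{o}der numbers.

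First I would record how $\Path$ reflects the slopes of $f$: under $\Path$, the restriction of $f$ to an interval $((i-1)/k,i/k)$ of slope $-\ell$ corresponds to a step $(1,-\ell)$ of the path, while every vertical step $(0,-1)$ of $\Path(f)$ records a downward jump of $f$ (or the final descent of the path to the horizontal axis). Hence $f \in \mathcal{SC}_k$ if and only if $\Path(f)$ uses only the three steps $(1,0)$, $(1,-1)$, and $(0,-1)$. Writing $\mathcal{L}_k^{\mathrm{SC}}$ for the set of such paths in $\mathcal{L}_k$ (equivalently, the lattice paths from $(0,k)$ to $(k,0)$ with steps in $\{(1,0),(1,-1),(0,-1)\}$ that meet the anti-diagonal $X+Y=k$ only at their endpoints), we have $|\mathcal{SC}_k| = |\mathcal{L}_k^{\mathrm{SC}}|$, so it remains to prove $|\mathcal{L}_k^{\mathrm{SC}}| = r_{k-1}$.

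Next I would apply the affine reflection $(x,y)\mapsto(x,k-y)$, which sends $\mathcal{L}_k^{\mathrm{SC}}$ bijectively onto the set $\mathcal{M}_k$ of lattice paths from $(0,0)$ to $(k,k)$ with steps $(1,0)$, $(1,1)$, $(0,1)$ that stay strictly below the main diagonal $Y=X$ except at the two endpoints (the defining inequality $X+Y>k$ turns into $Y<X$). In any such path the first step must be $(1,0)$ and the last must be $(0,1)$, since from $(0,0)$ a step $(0,1)$ would leave the path above the diagonal and a step $(1,1)$ on it, and symmetrically at $(k,k)$. Deleting the first and last steps and translating by $(-1,0)$ then gives a bijection from $\mathcal{M}_k$ onto the set of lattice paths from $(0,0)$ to $(k-1,k-1)$ with steps $(1,0),(1,1),(0,1)$ that stay weakly below $Y=X$: an interior point of an $\mathcal{M}_k$-path satisfies $Y\le X-1$ and hence lands weakly below the diagonal after the translation, and conversely bordering any weakly-below path in the $(k-1)\times(k-1)$ square by an initial $(1,0)$ and a terminal $(0,1)$ recovers a strictly-below path in the $k\times k$ square. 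The number of east/north/northeast lattice paths from $(0,0)$ to $(k-1,k-1)$ staying weakly below the main diagonal is a standard description of the $(k-1)$st Schr\"{o}der number $r_{k-1}$ (see \cite[Section~6.2 and Exercise~6.39]{EC2}; equivalently, sending $(1,0)$ to an up step, $(0,1)$ to a down step, and $(1,1)$ to a flat double step identifies these paths with the Schr\"{o}der paths of semilength $k-1$). Composing the bijections yields $|\mathcal{SC}_k| = |\mathcal{L}_k^{\mathrm{SC}}| = |\mathcal{M}_k| = r_{k-1}$, as desired.

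The argument is mostly bookkeeping; the step I expect to require the most care is the peeling of the first and last steps — one must invoke the \emph{strict} inequality (paths in $\mathcal{L}_k$ touch the anti-diagonal only at their endpoints) to pin down those two steps, and then correctly reconcile the strictly-below condition in the $k\times k$ square with the weakly-below condition in the $(k-1)\times(k-1)$ square. The case $k=1$, where both sides equal $1$, is a convenient sanity check.
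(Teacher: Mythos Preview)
Your argument is correct. The paper itself does not supply a proof of this proposition: it simply states the result and points the reader to Stanley's text for the definition of the Schr\"{o}der numbers, leaving the identification implicit in the surrounding discussion of $\mathcal{L}_k$. Your write-up makes that identification explicit --- translating $\mathcal{SC}_k$ through $\Path$ to paths in $\mathcal{L}_k$ with step set $\{(1,0),(1,-1),(0,-1)\}$, reflecting, and then peeling the forced first and last steps to land on the standard weakly-below-diagonal model for $r_{k-1}$ --- and handles the one genuinely delicate point (that the \emph{strict} avoidance of the anti-diagonal in $\mathcal{L}_k$ is exactly what pins down the first and last steps and converts ``strictly below in the $k\times k$ square'' into ``weakly below in the $(k-1)\times(k-1)$ square''). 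The $k=1$ base case and the parenthetical about the final vertical descent to $(k,0)$ are both appropriate sanity checks.
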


There is another embedding of a Catalan-numerous family in $\widetilde{\mathcal{P}}$. The following proposition is observed and proved by Alex Postnikov.

\begin{proposition}\label{prop:continuous-family}
Let $k$ be a positive integer. The number of continuous functions in $\widetilde{\mathcal{P}}_k$ is exactly the $k^{\text{th}}$ Catalan number
\[
C_k = \frac{1}{k+1} \binom{2k}{k}.
\]
\end{proposition}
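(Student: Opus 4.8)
The plan is to build an explicit bijection between the continuous functions in $\widetilde{\mathcal{P}}_k$ and the set $\cD_k$, whose cardinality is the Catalan number $C_k = \frac{1}{k+1}\binom{2k}{k}$ by the fact recalled in Section~\ref{s:rook-placements}. First I would record the shape of a continuous $f \in \widetilde{\mathcal{P}}_k$: its non-differentiable points all lie in $\{1/k, 2/k, \ldots, (k-1)/k\}$ since $f$ is linear on each $((i-1)/k, i/k)$, so being continuous, $f$ is the piecewise-linear interpolant of its grid values $f(0), f(1/k), \ldots, f(1)$. Hence $f$ is encoded by the integers $a_i := k\, f(i/k)$ for $i = 0, 1, \ldots, k$, where $a_i \in \mathbb{Z}$ for $i \ge 1$ by the definition of $\widetilde{\mathcal{P}}_k$ and $a_0 = k$, and it remains to determine which tuples $(a_1, \ldots, a_k)$ occur.

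Next I would translate the membership conditions into inequalities on the $a_i$. The requirement that $f \equiv 1$ on a neighborhood of $0$, together with linearity on $(0,1/k)$ and the left-continuity property $\lim_{x\nearrow a}f(x)=f(a)$ enjoyed by every element of $\widetilde{\mathcal{P}}$, forces $f\equiv 1$ on $[0,1/k]$, i.e. $a_1 = k$. Monotonicity of $f$ amounts to $a_0\ge a_1\ge\cdots\ge a_k$, after which the non-positive integer slopes on the pieces are automatic, and $f(1)>0$ reads $a_k\ge 1$. Finally, for the condition $\lim_{x\searrow a}f(x) > 1-a$ for $a\in(0,1)$: by continuity this is $f(a) > 1-a$ on $(0,1)$, and since $f$ and $x\mapsto 1-x$ are affine on each $[(i-1)/k, i/k]$, the difference $f(x)-(1-x)$ is positive throughout $(0,1)$ precisely when it is positive at the grid points $1/k, \ldots, (k-1)/k$; by integrality this says $a_i \ge k+1-i$ for $i=1,\ldots,k-1$, and the case $i=k$ of this inequality is exactly the already-recorded $a_k\ge 1$. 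Altogether the admissible tuples are the integer sequences $k=a_1\ge a_2\ge\cdots\ge a_k>0$ with $a_i\ge k+1-i$ for all $i$ --- which, writing $\lambda_i:=a_i$, are exactly the partitions $\lambda\in\cB_k$ with $\lambda_i \ge k+1-i$, i.e. the partitions in $\cD_k$.

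Conversely I would verify that for every $\lambda\in\cD_k$ the interpolant $f$ of the points $(0,1), (1/k,\lambda_1/k), \ldots, (1,\lambda_k/k)$ lies in $\widetilde{\mathcal{P}}_k$ and is continuous: it is weakly decreasing, piecewise linear with rational breakpoints, equal to $1$ on $[0,1/k]$ because $\lambda_1=k$, has $f(1)=\lambda_k/k>0$, stays strictly above $x\mapsto 1-x$ on $(0,1)$ because $\lambda_i\ge k+1-i$, and has slope $\lambda_i-\lambda_{i-1}$ (with $\lambda_0:=k$), a non-positive integer, on $((i-1)/k, i/k)$. Since this map is inverse to the encoding above, it is a bijection, and the count $C_k$ follows from $|\cD_k| = C_k$.

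The only mildly delicate steps --- the ones I would write with care --- are that continuity, left-continuity, and the ``$f\equiv1$ near $0$'' condition genuinely force $f(1/k)=1$ and make $f$ the linear interpolant of its grid values, and that strict positivity of $f(x)-(1-x)$ on all of $(0,1)$ is equivalent to the finitely many grid inequalities $a_i\ge k+1-i$; both rest on the elementary fact that an affine function positive at the two endpoints of a segment is positive on the whole segment. Everything else is routine bookkeeping.
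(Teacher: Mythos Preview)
Your proof is correct and is essentially the same bijection as the paper's: both record a continuous $f\in\widetilde{\mathcal{P}}_k$ by its grid values $a_i=k\,f(i/k)$, with you packaging the result as a partition $\lambda=(a_1,\ldots,a_k)\in\cD_k$ while the paper packages it as the Dyck path in $\Dyck(k)$ whose $i^{\text{th}}$ horizontal step sits at height $a_i$ (obtained by replacing each slanted segment of the dilated graph by an L-shape). Since $\cD_k$ and $\Dyck(k)$ are in the standard staircase bijection, the two encodings coincide; your version is just more explicit in checking that the defining inequalities of $\widetilde{\mathcal{P}}_k$ translate exactly into $\lambda_i\ge k+1-i$.
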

\begin{proof}
We construct an explicit bijection from the set of continuous functions in $\widetilde{\mathcal{P}}_k$ to $\Dyck(k)$, the set of Dyck paths from $(0,k)$ to $(k,0)$. Note that for each continuous function $f \in \widetilde{\mathcal{P}}_k$, the graph of $f$, after dilating to $[0,k] \times [0,k]$, is a continuous lattice path from $(0,k)$ to $(k,0)$ without a vertical step. We can write this path as the union of $k$ segments $\bigcup_{i=1}^k \gamma_i$, where
\[
 \gamma_i := \left[ \left(i-1, k \cdot f \left( \frac{i-1}{k} \right)\right), \left(i, k \cdot f \left( \frac{i}{k} \right)\right) \right]
\]
Replace each segment $\gamma_i$ with an $L$-shaped broken segment with the same endpoints:
\[
\gamma'_i := \left[ \left(i-1, k \cdot f \left( \frac{i-1}{k} \right)\right), \left(i-1, k \cdot f \left( \frac{i}{k} \right)\right) \right] \cup \left[ \left(i-1, k \cdot f \left( \frac{i}{k} \right)\right), \left(i, k \cdot f \left( \frac{i}{k} \right)\right) \right].
\]
Then, the union $\bigcup_{i=1}^k \gamma'_i$ is the desired Dyck path in $\Dyck(k)$.
\end{proof}

\begin{remark}\label{rmk:f(1)-is-1/k-family}
From Proposition~\ref{prop:continuous-family}, we quickly obtain yet another Catalan-numerous family. The number of continuous functions $f \in \widetilde{\mathcal{P}}_k$ for which $f(1) = 1/k$ is the $(k-1)^{\text{st}}$ Catalan number $C_{k-1}$. This is because under the bijection in the proof of Proposition~\ref{prop:continuous-family}, these functions become Dyck paths in $\Dyck(k)$ that visit $(k-1,1)$.
\end{remark}

The bijection in the proof of Proposition~\ref{prop:continuous-family} that sends continuous functions to Dyck paths might be extended to the whole $\widetilde{\mathcal{P}}_k$. The image of the extended map can be understood as Dyck paths with certain marks on vertical segments. These are combinatorial objects which we call {\em waterfalls}.

\begin{definition}
A {\em waterfall} of size $k$ is a Dyck path $\gamma \in \Dyck(k)$ together with a choice of coloring of every unit segment in $\gamma$ so that each segment is colored one of either green or blue with the following rules:
\begin{itemize}
    \item every horizontal segment is colored blue,
    \item every vertical segment on the line $x = k$ is colored green,
    \item every vertical segment with an endpoint on the line $x+y = k$ is colored green, and
    \item if $s_1$ and $s_2$ are vertical segments such that $s_1$ is immediately above $s_2$ and $s_2$ is colored blue, then $s_1$ must also be colored blue.
\end{itemize}
\end{definition}

Let $\WT(k)$ denote the set of waterfalls of size $k$. From our discussion above, we have
\[
|\WT(k)| = \frac{1}{k} \binom{3k-2}{k-1}.
\]

We obtain the following curious combinatorial formula.

\begin{proposition}\label{prop:curious}
Let $k$ be a positive integer. For each Dyck path $D \in \Dyck(k)$, let us define the {\em weight} of $D$ to be
\[
\wt(D) := \prod_{i=1}^{k-1} \# \left\{ j \in \mathbb{Z} \, | \, i+j > k \text{ and } (i,j) \in D \right\}.
\]
Then,
\[
\sum_{D \in \Dyck(k)} \wt(D) = \frac{1}{k} \binom{3k-2}{k-1}.
\]
\end{proposition}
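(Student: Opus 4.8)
The plan is to interpret the right-hand side combinatorially through the already-recorded identity $|\WT(k)| = \frac{1}{k}\binom{3k-2}{k-1}$, and to reduce the proposition to a purely local counting statement about colorings of a fixed Dyck path. By definition a waterfall of size $k$ is a pair $(D,c)$ with $D \in \Dyck(k)$ and $c$ a valid green/blue coloring of the unit segments of $D$, so grouping waterfalls by their underlying path gives $|\WT(k)| = \sum_{D \in \Dyck(k)} N(D)$, where $N(D)$ is the number of valid colorings of $D$. It therefore suffices to prove $N(D) = \wt(D)$ for every $D \in \Dyck(k)$, after which the proposition is immediate.

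To compute $N(D)$ I would argue that the coloring problem decouples across the columns $x = 0, 1, \ldots, k$. Every horizontal segment is forced blue, so only the vertical segments matter, and the only rule relating two vertical segments — blue propagates upward (equivalently, green propagates downward) — links segments in the same column, since consecutive vertical segments in different columns are separated by the unique right step into a column. In a fixed column the vertical segments of $D$ form one contiguous run $t_1, \ldots, t_m$ (listed from the bottom, using that there is exactly one right step entering the column), and the propagation rule forces the blue ones to be a top block $\{t_\ell, \ldots, t_m\}$; hence that column admits $m+1$ colorings, cut to $m$ if a boundary rule forces $t_1$ green. In column $0$ the topmost vertical segment has its upper endpoint $(0,k)$ on $X+Y=k$, so it is forced green, and downward propagation forces the whole column green; in column $k$ every vertical segment lies on $x=k$ and is forced green. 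Thus columns $0$ and $k$ each contribute a factor $1$, and neither index occurs in the product defining $\wt(D)$, so $N(D) = \prod_{i=1}^{k-1} N_i(D)$ with $N_i(D)$ the number of colorings of column $i$.

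It then remains to check, for $1 \le i \le k-1$, that $N_i(D) = \wt_i(D) := \#\{\,j \in \mathbb{Z} : i+j > k,\ (i,j) \in D\,\}$. The lattice points of $D$ in column $i$ form an interval $(i,b_i), (i,b_i+1), \ldots, (i,a_i)$ with $b_i \ge k-i$ (the Dyck condition), giving $m := a_i - b_i$ vertical segments. If $b_i \ge k-i+1$, no vertical segment in column $i$ touches or abuts $X+Y=k$, so none is forced, $N_i(D) = m+1$, and every lattice point in the column satisfies $i+j>k$, so $\wt_i(D) = m+1$. If $b_i = k-i$ — that is, $D$ meets $X+Y=k$ at column $i$ — then $D$ must arrive at $(i,k-i)$ by a down step (arriving by a right step would take $D$ below the line), so $m \ge 1$ and the bottom segment $t_1 = [(i,k-i+1),(i,k-i)]$ has an endpoint on $X+Y=k$ and is forced green, giving $N_i(D) = m$; meanwhile the lattice points with $i+j>k$ are exactly $(i,k-i+1), \ldots, (i,a_i)$, so $\wt_i(D) = m$ as well. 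Hence $N_i(D) = \wt_i(D)$ in both cases, so $N(D) = \prod_{i=1}^{k-1}\wt_i(D) = \wt(D)$, and summing over $\Dyck(k)$ finishes the argument.

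The step I expect to require the most care is this last matching: identifying precisely which vertical segments the waterfall rules force to be green, and verifying that the single forced-green bottom segment is exactly what removes one coloring in a column where $D$ meets the anti-diagonal — in particular checking that such a column can never be empty of vertical segments (the degenerate case in which $\wt_i(D)$ would vanish). The rest is bookkeeping, but aligning the boundary conditions in the definition of a waterfall with the strict inequality $i+j>k$ in the definition of $\wt$ is the crux of the proof.
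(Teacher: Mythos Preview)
Your proof is correct and follows the same approach as the paper: both recognize that $\wt(D)$ counts the number of waterfalls with underlying Dyck path $D$, and then invoke $|\WT(k)| = \frac{1}{k}\binom{3k-2}{k-1}$. The paper states this identification as a one-line assertion, whereas you carry out the column-by-column verification explicitly; your added detail (in particular the case analysis on whether a column touches the anti-diagonal, and the observation that such a column necessarily contains at least one vertical segment) is exactly what is needed to justify that assertion.
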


\begin{proof}
Note that the weight $\wt(D)$ is the number of waterfalls whose underlying Dyck paths are $D$. Therefore, $\sum_{D \in \Dyck(k)} \wt(D)$ counts the total number of waterfalls in $\WT(k)$.
\end{proof}

\subsection{A precise asymptotic formula for the number of rook placements for functions in $\widetilde{\mathcal{P}}_k$}\label{ss:precise-asymp} The goal of this subsection is to compute a precise asymptotic formula for $\#\RP(\lambda(f,N))$, for each $f \in \widetilde{\mathcal{P}}_k$, of the form
\[
\log\left( \#\RP(\lambda(f,N)) \right) = A_f \cdot N \log N + B_f \cdot N + C_f \cdot \log N + D_f + O_f(1/N),
\]
for positive integers $N \in k \mathbb{Z} := \{k, 2k, 3k, \ldots\}$. Since $\widetilde{\mathcal{P}}_k$ is a subclass of $\mathcal{P}$, the quantities $A_f$, $B_f$, and $C_f$ are the same as before.

In this subsection, we redefine our notations $\mu_i$ and $\beta_i$. These notations now have slightly different meanings from what they meant in Subsection~\ref{ss:asymp-P}. For $f \in \widetilde{\mathcal{P}}_k$ and for any $i \in [k]$, we know that $f$ is a linear function on the half-open interval $( (i-1)/k, i/k ]$. Let $\mu_i$ and $\beta_i$ be such that for $x \in ((i-1)/k, i/k]$, we have $f(x) = \mu_i x + \beta_i$.

Because $f \in \widetilde{\mathcal{P}}_k$, the discrepancy from rounding, $R(f,N)$ (as in Proposition~\ref{prop:RfN-is-Of1}), is zero. Furthermore, the effect from jumps (as in Equation~(\ref{eq:effect-from-jumps})) is also zero. Therefore, for $f \in \widetilde{\mathcal{P}}_k$ and for any positive integer $N \in k \mathbb{Z}$, we have
\begin{equation}
    \log \left( \# \RP(\lambda(f,N)) \right) = \sum_{i=1}^k \sum_{\frac{i-1}{k} N < n \le \frac{i}{k} N} \log((\mu_i+1)n + (\beta_i-1)N).
\end{equation}
Once again, we break the outer summation on the right-hand side above into when $i = 1$ and when $i \ge 2$. When $i = 1$, we have, by Stirling's formula,
\begin{align}
&\sum_{0 < n \le \frac{N}{k}} \log((\mu_i+1)n + (\beta_i-1)N) = \sum_{0 < n \le \frac{N}{k}} \log(n) = \log((N/k)!) \\
&= \frac{1}{k} \cdot N \log N - \frac{(\log k)+1}{k} \cdot N + \frac{1}{2} \log N + \frac{1}{2} \log \left( \frac{2\pi}{k} \right) + O(k/N),\label{eq:Stirling-wt-P}
\end{align}
for positive integers $N \in k \mathbb{Z}$.

When $2 \le i \le k$, we use the Euler-Maclaurin summation formula (cf. \cite[Appendix~B]{MV07}) to obtain
\begin{align}
    &\sum_{\frac{i-1}{k} N < n \le \frac{i}{k} N} \log((\mu_i+1)n + (\beta_i-1)N) \\
    &= \frac{1}{k} \cdot N \log N + \left\{ \int_{(i-1)/k}^{i/k} \log(f(x)+x-1) \, dx \right\} \cdot N \label{eq:EM-wt-P} \\
    &\hphantom{=} + \frac{1}{2} \cdot \log \left( \frac{(\mu_i+1) \cdot \frac{i}{k} + (\beta_i - 1)}{(\mu_i+1) \cdot \frac{i-1}{k} + (\beta_i - 1)} \right) + O_f(1/N).
\end{align}
Combining these terms, we obtain the following theorem.

\begin{theorem}\label{thm:asymp-wt-P}
Let $k$ be a positive integer. Let $f \in \widetilde{\mathcal{P}}_k$. We have
\[
\log \left( \# \RP(\lambda(f,N)) \right) = N \log N + B_f \cdot N + \frac{1}{2} \log N + D_f + O_f(1/N),
\]
for positive integers $N \in k \mathbb{Z}$, where $B_f$ is as given in Theorem~\ref{thm:asymp-P}, and
\[
D_f := \frac{1}{2} \log(2\pi) + \frac{1}{2} \int_0^1 \frac{xf'(x) - f(x) + 1}{x(f(x)+x-1)} dx.
\]
\end{theorem}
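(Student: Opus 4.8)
The plan is to assemble Theorem~\ref{thm:asymp-wt-P} from the term-by-term expansions already obtained, namely Equation~(\ref{eq:Stirling-wt-P}) for the index $i = 1$ and Equation~(\ref{eq:EM-wt-P}) for each index $2 \le i \le k$, and then to read off the four coefficients. Summing all $k$ expansions — and noting that $k$ error terms of size $O_f(1/N)$ combine into a single $O_f(1/N)$, since $k$ depends only on $f$ — the coefficient of $N\log N$ is $k \cdot \tfrac1k = 1$ and the coefficient of $\log N$ is $\tfrac12$ (contributed only by $i=1$). This already produces the claimed shape, and it remains to identify the coefficient of $N$ with $B_f$ and the constant with $D_f$.

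For the coefficient of $N$: the $i=1$ term contributes $-\tfrac{(\log k)+1}{k}$, and the indices $i \ge 2$ contribute $\sum_{i=2}^{k}\int_{(i-1)/k}^{i/k}\log(f(x)+x-1)\,dx = \int_{1/k}^{1}\log(f(x)+x-1)\,dx$. I would then use that every $f \in \widetilde{\mathcal{P}}_k$ is linear on $(0,1/k)$ and equal to $1$ near $0$, hence identically $1$ on $[0,1/k]$ (left-continuity at $1/k$, which holds in $\widetilde{\mathcal{P}}_k$, gives $f(1/k)=1$ too); thus $f(x)+x-1 = x$ there and $\int_0^{1/k}\log(f(x)+x-1)\,dx = \int_0^{1/k}\log x\,dx = -\tfrac{(\log k)+1}{k}$. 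So the $i=1$ contribution fills in exactly the missing piece, and the total is $\int_0^1\log(f(x)+x-1)\,dx = B_f$, matching Theorem~\ref{thm:asymp-P}.

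The substantive point is the constant term. Here $i=1$ contributes $\tfrac12\log(2\pi/k)$, while the indices $i \ge 2$ contribute $\tfrac12\sum_{i=2}^{k}\log\!\bigl(\frac{(\mu_i+1)(i/k)+(\beta_i-1)}{(\mu_i+1)((i-1)/k)+(\beta_i-1)}\bigr)$, which I would recognize as telescoping up to an explicit correction. The key device is to set $\phi(x) := \log\!\bigl(\frac{f(x)+x-1}{x}\bigr)$; then $\phi$ is piecewise-smooth on $[0,1]$, vanishes identically on $(0,1/k)$, and on each piece $((i-1)/k,\,i/k)$ satisfies $\phi'(x) = \frac{xf'(x) - f(x) + 1}{x(f(x)+x-1)}$ — precisely the integrand appearing in $D_f$. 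Since $(\mu_i+1)x+(\beta_i-1)$ represents $f(x)+x-1$ on $((i-1)/k,\,i/k]$, each summand equals $\phi\bigl((i/k)^-\bigr) - \phi\bigl(((i-1)/k)^+\bigr) + \log\frac{i}{i-1}$; summing over $2 \le i \le k$ gives $\int_{1/k}^{1}\phi'(x)\,dx + \log k = \int_{0}^{1}\phi'(x)\,dx + \log k$ (the first term because each piece's contribution is $\int$ of $\phi'$ over that piece, the $\log k$ because $\prod_{i=2}^k \frac{i}{i-1}$ telescopes). Adding the $i=1$ contribution, the two occurrences of $\pm\tfrac12\log k$ cancel, leaving $D_f = \tfrac12\log(2\pi) + \tfrac12\int_0^1 \frac{xf'(x)-f(x)+1}{x(f(x)+x-1)}\,dx$, as claimed.

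I do not anticipate a genuine obstacle, since all analytic estimates (Stirling, Euler–Maclaurin, vanishing of the rounding discrepancy and of the jump correction for $f \in \widetilde{\mathcal{P}}_k$) are already in place before the statement. The only care needed is the bookkeeping at the breakpoints $x = i/k$: tracking which one-sided limit of $f$ the linear data $\mu_i,\beta_i$ encode (they give the right-hand limit at $(i-1)/k$ and the actual value at $i/k$), invoking the left-continuity of functions in $\widetilde{\mathcal{P}}_k$, and observing that the integrand of $D_f$ is a proper integrand — it vanishes on $(0,1/k)$, and by Proposition~\ref{prop:basic-loft} its denominator stays bounded away from $0$ wherever its numerator is nonzero. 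Beyond that, the proof is just this assembly and the telescoping identity above.
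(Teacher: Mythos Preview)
Your proposal is correct and follows the paper's approach: assemble the Stirling expansion (\ref{eq:Stirling-wt-P}) for $i=1$ with the Euler--Maclaurin expansions (\ref{eq:EM-wt-P}) for $i\ge 2$, then read off the coefficients. The paper leaves the identification of the constant term essentially implicit (it simply says ``combining these terms'' and later records the directly-telescoped sum as Proposition~\ref{prop:D_f-formula}); your device $\phi(x)=\log\bigl((f(x)+x-1)/x\bigr)$ with $\phi'(x)$ equal to the integrand of $D_f$ is a clean way to make that step explicit and land on the integral formula in one pass, and your bookkeeping at the breakpoints is accurate.
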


Note that one has to be careful about the integral in the formula of $D_f$. Since in general $f$ has a number of non-differentiable points, the derivative $f'$ might be undefined for some values of $x$. By the integral as expressed, we mean
\[
\int_0^1 \frac{xf'(x) - f(x) + 1}{x(f(x)+x-1)} dx = \sum_{i=1}^k \int_{(i-1)/k}^{i/k} \frac{xf'(x) - f(x) + 1}{x(f(x)+x-1)} dx.
\]
Since $f$ is linear in $((i-1)/k, i/k)$, the sum of integrals on the right-hand side is well-defined.

An illustration of Theorem~\ref{thm:asymp-wt-P} is given in Figure~\ref{fig:P3}. In the following examples, we compute explicit asymptotic formulas for certain functions.

\begin{example}\label{ex:B-low}
Suppose that $k$ is a positive integer. Let $f:[0,1] \to [0,1]$ be given as
\[
f(x) := \min\left\{ 1 + \frac{1}{k} - x, 1 \right\},
\]
for all $x \in [0,1]$. Note that $f \in \widetilde{\mathcal{P}}_k$. We have $B_f = - \log k - \frac{1}{k}$ and $D_f = \frac{1}{2} \log(2\pi/k)$. Therefore,
\[
\#\RP(\lambda(f,N)) \sim \sqrt{\frac{2\pi}{k}} \cdot N^{N+\frac{1}{2}} \cdot (k^{-1}e^{-1/k})^N,
\]
as $N \to \infty$, $N \in k\mathbb{Z}$.
\end{example}

\begin{example}\label{ex:D-high}
Suppose that $k$ is a positive integer. Let $f:[0,1] \to [0,1]$ be given as
\[
f(x) := \min \left\{ 1 + \frac{2-\left\lceil kx \right\rceil}{k}, 1 \right\},
\]
for all $x \in [0,1]$. Note that $f \in \widetilde{\mathcal{P}}_k$. We have $B_f = -\log k + \left( 2 - \frac{2}{k} \right) \log 2 - 1$ and $D_k = -\frac{1}{2}\log k + \frac{k}{2} \log 2 + \frac{1}{2} \log \pi$. Therefore,
\[
\#\RP(\lambda(f,N)) \sim \sqrt{\frac{2^k \pi}{k}} \cdot N^{N+\frac{1}{2}} \cdot \left( 2^{2-2/k} \, k^{-1} \, e^{-1} \right)^N,
\]
as $N \to \infty$, $N \in k \mathbb{Z}$.
\end{example}

As another application of our result, we can detect the number of {\em ground bumps} of Dyck paths analytically. If $D \in \Dyck(k)$ is a Dyck path from $(0,k)$ to $(k,0)$, then a {\em ground bump} of $\gamma$ is an intersection between $\gamma$ and the open line segment from $(0,k)$ to $(k,0)$. Recall that a partition $\lambda \in \cD_k$ can be thought of as a Dyck path from $(0,k)$ to $(k,0)$. In the following proposition, a {\em ground bump} of $\lambda$ is defined as a ground bump of the Dyck path corresponding to $\lambda$.

\begin{proposition}\label{prop:ground-bump-detect}
Let $\lambda$ be any nonempty partition such that $\RP(\lambda) \neq \varnothing$. Then, there exist positive real numbers $\alpha_1, \alpha_2, \alpha_3, \alpha_4 > 0$ such that
\[
\#\RP(N \odot \lambda) \sim N^{\alpha_1 N + \alpha_2} \cdot \alpha_3^N \cdot \alpha_4,
\]
as $N \to \infty$. Furthermore, $\alpha_1 = \lambda_1$ and
\[
\alpha_2 = \frac{(\#\text{ground bumps of } \lambda) + 1}{2}.
\]
\end{proposition}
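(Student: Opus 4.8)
The plan is to derive everything from Stirling's formula applied to an exact product formula, with a short lattice-path argument at the end to pin down the coefficient $\alpha_2$. First, note that since $\RP(\lambda)\neq\varnothing$ the partition $\lambda$ lies in $\cD_n$, where $n=\lambda_1$ is simultaneously the number of parts of $\lambda$. Set $c_i:=\lambda_i-(n-i)$ for $i\in[n]$; membership $\lambda\in\cD_n$ is precisely the statement that every $c_i$ is a positive integer, and one always has $c_1=\lambda_1-(n-1)=1$. Plugging $m=N$ into the product formula for $\#\RP(m\odot\lambda)$ in Section~\ref{s:rook-placements} (itself a consequence of Equation~(\ref{eq:RP-prod})) and using the simplification $N!\cdot\binom{Nc_i}{N}=(Nc_i)!/(N(c_i-1))!$, I would obtain the clean identity
\[
\#\RP(N\odot\lambda)=\prod_{i=1}^{n}\frac{(Nc_i)!}{(N(c_i-1))!},
\]
valid for every positive integer $N$; no divisibility hypothesis on $N$ is needed since $N\odot\lambda$ is defined for all $N$.

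Next I would take logarithms and apply $\log M!=M\log M-M+\tfrac12\log M+\tfrac12\log(2\pi)+O(1/M)$ to each factor. A factor with $c_i=1$ equals $N!$ and contributes $N\log N-N+\tfrac12\log N+\tfrac12\log(2\pi)+O(1/N)$, whereas a factor with $c_i\ge 2$ contributes $N\log N+\bigl(c_i\log c_i-(c_i-1)\log(c_i-1)-1\bigr)N+\tfrac12\log\tfrac{c_i}{c_i-1}+O(1/N)$, the two $\tfrac12\log N$ terms having cancelled. Writing $G:=\#\{i\in[n]:c_i=1\}$ and summing over $i$ gives
\[
\log\bigl(\#\RP(N\odot\lambda)\bigr)=nN\log N+(\log\alpha_3)N+\tfrac{G}{2}\log N+\log\alpha_4+O(1/N),
\]
with $\alpha_3:=\exp\sum_{i=1}^{n}\bigl(c_i\log c_i-(c_i-1)\log(c_i-1)-1\bigr)>0$ (convention $0\log 0=0$) and $\alpha_4:=(2\pi)^{G/2}\prod_{i:\,c_i\ge 2}\sqrt{c_i/(c_i-1)}>0$. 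Exponentiating, with the $O(1/N)$ error becoming a factor $1+O(1/N)\to 1$, yields $\#\RP(N\odot\lambda)\sim N^{\alpha_1N+\alpha_2}\alpha_3^{N}\alpha_4$ with $\alpha_1:=\lambda_1$ and $\alpha_2:=G/2$; positivity of $\alpha_1$ follows because $\lambda$ is nonempty and of $\alpha_2$ because $c_1=1$, so $G\ge 1$. This already gives $\alpha_1=\lambda_1$.

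It then remains to show $\alpha_2=G/2$ equals $\tfrac12\bigl(1+\#(\text{ground bumps of }\lambda)\bigr)$. I would make the path of $\lambda$ explicit: place the Young diagram of $\lambda$ in $[0,n]^2$ with row $i$ occupying $[0,\lambda_i]\times[n-i,n-i+1]$ and let $\Gamma_\lambda$ be its south-east boundary, a lattice path from $(n,n)$ to $(0,0)$ whose successive vertices are $(\lambda_i,n-i+1),(\lambda_i,n-i)$ for $i=1,\dots,n$, followed by $(0,0)$. The rigid motion $(x,y)\mapsto(n-y,x)$ carries $\Gamma_\lambda$ to a Dyck path in $\Dyck(n)$ — a representative of the path associated to $\lambda$ — and carries the diagonal $\{X=Y\}$ onto the line $\{X+Y=n\}$; since a rigid motion preserves which intersection points are endpoints, the number of ground bumps of $\lambda$ equals the number of interior intersections of $\Gamma_\lambda$ with the diagonal. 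Reading off the vertex list, and using $c_i\ge 1$ to rule out any crossing in the interior of an edge, one checks that $\Gamma_\lambda$ meets $\{X=Y\}$ exactly at the vertices $(\lambda_i,n-i+1)$ with $\lambda_i=n-i+1$ (equivalently $c_i=1$) together with the terminal vertex $(0,0)$, i.e.\ at $G+1$ points in all; discarding the two endpoints $(n,n)$ (the index $i=1$) and $(0,0)$ leaves $G-1$ ground bumps, so $G=1+\#(\text{ground bumps of }\lambda)$ as desired.

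The Stirling expansion and the assembly of the $N$-linear and constant parts in the second step are entirely routine. The one place I expect to need care is the last step: fixing precisely the path $\Gamma_\lambda$ attached to $\lambda$ and verifying that it meets the diagonal only at the listed vertices and never in the interior of an edge — which is exactly where the hypothesis $\RP(\lambda)\neq\varnothing$ (equivalently $c_i\ge 1$ for all $i$) is used. I expect this bit of lattice-path bookkeeping, rather than the asymptotic analysis, to be the main obstacle.
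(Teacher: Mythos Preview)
Your proof is correct and takes a genuinely different route from the paper's. The paper decomposes the Dyck path of $\lambda$ at its ground bumps into bump-free pieces $\lambda^{(1)}\ast\cdots\ast\lambda^{(p)}$, observes that $\#\RP(N\odot\lambda)$ factors accordingly, and then applies its main asymptotic Theorem~\ref{thm:asymp-wt-P} (built on Euler--Maclaurin and the integral formulas for $B_f,D_f$) to each piece; the $\tfrac12\log N$ contributed by each of the $p$ pieces gives $\alpha_2=p/2$. You instead go straight to the exact identity $\#\RP(N\odot\lambda)=\prod_i (Nc_i)!/(N(c_i-1))!$ and apply Stirling termwise, so that only the indices with $c_i=1$ contribute a $\tfrac12\log N$, and then identify $G=\#\{i:c_i=1\}$ with $p$ by a direct lattice-path count. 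Your approach is more elementary and self-contained (no Euler--Maclaurin, no $\widetilde{\mathcal P}_k$ machinery) and yields explicit closed forms for $\alpha_3,\alpha_4$ in terms of the $c_i$; the paper's approach, on the other hand, exhibits the proposition as a corollary of its central asymptotic theorem and makes the role of the decomposition into irreducible Dyck pieces transparent. Both are valid; the only delicate point in yours, as you anticipated, is the bookkeeping that all diagonal intersections of $\Gamma_\lambda$ occur at the listed vertices, which indeed uses $c_i\ge 1$.
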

\begin{proof}
Since $\RP(\lambda) \neq \varnothing$, we have that $\lambda \in \cD_{\lambda_1}$. As a Dyck path, $\lambda$ can be uniquely written as a concatenation $\lambda = \lambda^{(1)} \ast \lambda^{(2)} \ast \cdots \ast \lambda^{(p)}$ such that each $\lambda^{(i)}$ is a Dyck path without ground bumps. Observe that
\begin{equation}\label{eq:lambda-product}
    \#\RP(N \odot \lambda) = \prod_{i=1}^p \#\RP(N \odot \lambda^{(i)}),
\end{equation}
and that
\begin{equation}\label{eq:lambda-sum}
    \lambda_1 = \sum_{i=1}^p \lambda^{(i)}_1,
\end{equation}
where $\lambda^{(i)}_1$ denotes the first part of the partition $\lambda^{(i)}$.

For each $i \in [p]$, there is a unique corresponding function $f^{(i)} \in \widetilde{\mathcal{P}}_{\lambda_1^{(i)}}$. We have that for any positive integer $N$,
\[
\#\RP(N \odot \lambda^{(i)}) = \#\RP(\lambda(f^{(i)}, N \lambda^{(i)}_1)).
\]
(Note that the notation $\lambda$ on the right-hand side of the equation above is an operator, not a partition.) Now, Theorem~\ref{thm:asymp-wt-P} gives
\begin{align}
    \log(\#\RP(N \odot \lambda_i)) &= \lambda_1^{(i)} N \log N + (\lambda^{(i)}_1 \log \lambda^{(i)}_1) N + B_{f^{(i)}}\lambda^{(i)}_1 N + \frac{1}{2} \log N \label{eq:N-odot-lambda-i} \\
    &\hphantom{=} + \frac{1}{2} \log \lambda^{(i)}_1 + D_{f^{(i)}} + O_{\lambda}(1/N).
\end{align}
Combining Equations~(\ref{eq:lambda-product}), (\ref{eq:lambda-sum}), and (\ref{eq:N-odot-lambda-i}), we obtain
\begin{align}
    \log(\#\RP(N \odot \lambda)) &= \lambda_1 N \log N + \sum_{i=1}^p \left(\lambda^{(i)}_1 \log \lambda^{(i)}_1 + B_{f^{(i)}} \lambda^{(i)}_i \right) \cdot N \\
    &\hphantom{=} + \frac{p}{2} \log N + \sum_{i=1}^p \left( \frac{1}{2} \log \lambda^{(i)}_1 + D_{f^{(i)}} \right) + O_{\lambda}(1/N),
\end{align}
for positive integers $N$. Since $p-1$ is the number of ground bumps of $\lambda$, we have finished the proof.
\end{proof}

\begin{figure}
\begin{center}
\def\Pttfn[#1,#2,#3]{
\draw (0,0) grid (3,3); \draw[dashed] (0,3) -- (3,0); \draw[line width = 1 pt, color = red] (0,3) -- (1,3) -- (2,#1); \draw[line width = 1 pt, color = red] (2,#2) -- (3,#3); }
\def\h{5}
    \begin{tikzpicture}[scale = 0.3]
    \begin{scope}[shift={(0,6*\h)}]    
    \Pttfn[3,3,3]
    \node[anchor = west] at (4,1.5) {$\sim \sqrt{2\pi} \cdot N^{N+1/2} \cdot (e^{-1})^N$};
    \end{scope}
    
    \begin{scope}[shift={(0,5*\h)}]    
    \Pttfn[3,3,2]
    \node[anchor = west] at (4,1.5) {$\sim \sqrt{\frac{4}{3}\pi} \cdot N^{N+1/2} \cdot \left( 2^1 \, 3^{-1} \, e^{-2/3} \right)^N$};
    \end{scope}
    
    \begin{scope}[shift={(0,4*\h)}]    
    \Pttfn[3,3,1]
    \node[anchor = west] at (4,1.5) {$\sim \sqrt{\frac{2}{3} \pi} \cdot N^{N+1/2} \cdot \left( 2^{4/3} \, 3^{-1} \, e^{-1} \right)^N$};
    \end{scope}
    
    \begin{scope}[shift={(0,3*\h)}]    
    \Pttfn[3,2,2]
    \node[anchor = west] at (4,1.5) {$\sim \sqrt{\frac{8}{3} \pi} \cdot N^{N+1/2} \cdot \left( 2^{4/3} \, 3^{-1} \, e^{-1} \right)^N$};
    \end{scope}
    
    \begin{scope}[shift={(0,2*\h)}]    
    \Pttfn[3,2,1]
    \node[anchor = west] at (4,1.5) {$\sim \sqrt{\frac{4}{3} \pi} \cdot N^{N+1/2} \cdot \left( 2^{2/3} \, 3^{-1} \, e^{-2/3} \right)^N$};
    \end{scope}
    
    \begin{scope}[shift={(0,\h)}]    
    \Pttfn[2,2,2]
    \node[anchor = west] at (4,1.5) {$\sim \sqrt{\frac{4}{3} \pi} \cdot N^{N+1/2} \cdot \left( 2^{2/3} \, 3^{-1} \, e^{-2/3} \right)^N$};
    \end{scope}
    
    \begin{scope}[shift={(0,0)}]    
    \Pttfn[2,2,1]
    \node[anchor = west] at (4,1.5) {$\sim \sqrt{\frac{2}{3} \pi} \cdot N^{N+1/2} \cdot \left( 3^{-1} \, e^{-1/3} \right)^N$};
    \end{scope}
    \end{tikzpicture}
\end{center}
\caption{The seven functions in the class $\widetilde{\mathcal{P}}_3$ and their corresponding asymptotic formulas for $\#\operatorname{RP}(\lambda(f,N))$, as $N \to \infty$, $N \in 3\mathbb{Z}$.\label{fig:P3}}
\end{figure}

\subsection{Properties of $D_f$}\label{ss:prop-D} Theorem~\ref{thm:asymp-wt-P} gives an integral formula for $D_f$. In applications, it is also useful to have the following formula, which is immediate from Equations~(\ref{eq:Stirling-wt-P}) and (\ref{eq:EM-wt-P}).

\begin{proposition}\label{prop:D_f-formula}
For any function $f \in \widetilde{\mathcal{P}}$, we have
\[
D_f = \frac{1}{2} \log(2\pi f(1)) + \frac{1}{2} \sum_{a \in (0,1)} \log \left( \frac{f(a) + a - 1}{\lim_{x \searrow a} f(x) + x - 1} \right).
\]
Note that the sum on the right-hand side is finite, since there are only finitely many discontinuous points for $f$.
\end{proposition}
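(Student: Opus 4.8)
The plan is to read the constant term off the asymptotic decomposition built in Subsection~\ref{ss:precise-asymp} and then repackage the resulting telescoping sum into the closed form claimed in Proposition~\ref{prop:D_f-formula}. Fix $k$ and a function $f \in \widetilde{\mathcal{P}}_k$. Collecting from Equations~(\ref{eq:Stirling-wt-P}) and~(\ref{eq:EM-wt-P}) the contributions that neither grow with $N$ nor tend to $0$ — that is, discarding the $N\log N$, $N$, and $\log N$ terms, which are already recorded in $A_f$, $B_f$, $C_f$ — the surviving constant is
\[
D_f = \frac{1}{2} \log\!\left( \frac{2\pi}{k} \right) + \frac{1}{2} \sum_{i=2}^{k} \log\!\left( \frac{(\mu_i+1)\cdot\frac{i}{k}+(\beta_i-1)}{(\mu_i+1)\cdot\frac{i-1}{k}+(\beta_i-1)} \right),
\]
where the term $\frac12\log(2\pi/k)$ comes from the $i=1$ block in Equation~(\ref{eq:Stirling-wt-P}) and the sum from the blocks $i = 2, \ldots, k$ in Equation~(\ref{eq:EM-wt-P}).

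Next I would identify the quantities inside the logarithms. Since $f(x) = \mu_i x + \beta_i$ on $((i-1)/k, i/k]$, a one-line computation gives $(\mu_i+1)\cdot\frac{i}{k}+(\beta_i-1) = f(i/k) + \frac{i}{k} - 1$ and $(\mu_i+1)\cdot\frac{i-1}{k}+(\beta_i-1) = \lim_{x \searrow (i-1)/k} f(x) + \frac{i-1}{k} - 1$; all of these are positive by Proposition~\ref{prop:basic-P} and the defining properties of $\mathcal{P}$. Abbreviating $g(a) := f(a) + a - 1$ and $g^{+}(a) := \lim_{x\searrow a} f(x) + a - 1$, the sum becomes $\sum_{i=2}^{k} \log g(i/k) - \sum_{i=2}^{k} \log g^{+}((i-1)/k)$. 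Reindexing and telescoping, $\sum_{i=2}^{k} \log g(i/k) = \sum_{j=1}^{k-1} \log g(j/k) + \log g(1) - \log g(1/k)$, while $\sum_{i=2}^{k} \log g^{+}((i-1)/k) = \sum_{j=1}^{k-1} \log g^{+}(j/k)$. Here I use the two boundary facts $g(1) = f(1)$ and $g(1/k) = 1/k$: the latter holds because $f \in \mathcal{P}$ forces $f \equiv 1$ on a right-neighborhood of $0$, so $f$ being linear on $(0,1/k)$ forces slope $0$, and by left-continuity of functions in $\widetilde{\mathcal{P}}$ one gets $f(1/k) = 1$. Substituting, the term $-\log g(1/k) = \log k$ combines with $\frac12 \log(2\pi/k)$ to give $\frac12 \log(2\pi)$, and together with $\frac12 \log g(1) = \frac12 \log f(1)$ this produces the constant $\frac12 \log(2\pi f(1))$, leaving the residual $\frac12 \sum_{j=1}^{k-1} \log\bigl( g(j/k)/g^{+}(j/k) \bigr)$.

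Finally I would observe that for $f \in \widetilde{\mathcal{P}}_k$ the only points of $(0,1)$ at which $f$ can be discontinuous are $1/k, 2/k, \ldots, (k-1)/k$: on each open interval $((i-1)/k, i/k)$ the function is linear hence continuous, and $f$ is left-continuous everywhere, so at any other $a \in (0,1)$ we have $g(a) = g^{+}(a)$ and the associated log-term is $0$. Hence $\sum_{j=1}^{k-1} \log\bigl( g(j/k)/g^{+}(j/k) \bigr)$ equals the finite sum $\sum_{a \in (0,1)} \log\bigl( (f(a)+a-1)/(\lim_{x\searrow a} f(x)+a-1) \bigr)$, and the formula follows. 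No step is genuinely hard; the only place demanding care is the bookkeeping of the two boundary contributions at $a = 1/k$ and $a = 1$ in the telescoping — which is exactly where the factor $2\pi$ and the factor $f(1)$ inside $\log(2\pi f(1))$ come from — so I would present that part in full detail and keep the rest terse.
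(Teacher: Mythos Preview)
Your proposal is correct and follows exactly the route the paper indicates: the paper's entire proof is the remark that the formula ``is immediate from Equations~(\ref{eq:Stirling-wt-P}) and~(\ref{eq:EM-wt-P}),'' and what you have written is precisely the bookkeeping that makes this immediacy explicit. The identification of the numerator and denominator with $f(i/k)+i/k-1$ and $\lim_{x\searrow(i-1)/k}f(x)+(i-1)/k-1$, the boundary evaluations $g(1)=f(1)$ and $g(1/k)=1/k$, and the final observation that only the discontinuity points of $f$ contribute to the sum are all correct and constitute the natural unpacking of the paper's one-line justification.
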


Let $p \ge 1$ be any positive real number. Let $(\widetilde{\mathcal{P}}, L^p)$ be the metric space obtained from endowing $\widetilde{\mathcal{P}}$ with the $L^p$ norm. The map $f \mapsto D_f$ induces a well-defined map
\[
\boldsymbol{D}: (\widetilde{\mathcal{P}}, L^p) \to \mathbb{R}.
\]
In Proposition~\ref{prop:B-discontinuous}, we have seen that the map $\boldsymbol{B}:(\mathcal{P}, L^p) \to (\mathbb{R}, \operatorname{Euclid})$ is discontinuous everywhere. The following proposition says that $\boldsymbol{D}$, considered as a function from $(\widetilde{\mathcal{P}}, L^p)$ to $(\mathbb{R}, \operatorname{Euclid})$, exhibits a similar topological property.

\begin{proposition}
The map $\boldsymbol{D}:(\widetilde{\mathcal{P}}, L^p) \to (\mathbb{R}, \operatorname{Euclid})$ is discontinuous everywhere on $\widetilde{\mathcal{P}}$.
\end{proposition}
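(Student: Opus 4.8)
The plan is to reuse the perturbation idea behind Proposition~\ref{prop:B-discontinuous}, but to read off the conclusion from the closed formula for $D_f$ in Proposition~\ref{prop:D_f-formula}. Fix an arbitrary $f \in \widetilde{\mathcal{P}}$, say $f \in \widetilde{\mathcal{P}}_k$. In that formula the dependence of $D_f$ on the values of $f$ near the right endpoint is concentrated in the single term $\tfrac12\log(2\pi f(1))$, while the remaining finite sum only records the one-sided values of $f$ at its jump points. So the idea is to approximate $f$ in $L^p$ by functions $g_n \in \widetilde{\mathcal{P}}$ that share exactly the jump points of $f$ and exactly the one-sided values there, but with $g_n(1) \to 0$. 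For such a sequence the finite sums for $g_n$ and for $f$ agree, leaving $D_{g_n} - D_f = \tfrac12\log(g_n(1)/f(1)) \to -\infty$, which is the desired discontinuity at $f$; as $f$ is arbitrary, this gives discontinuity everywhere. It is worth noting why a direct copy of the construction in Proposition~\ref{prop:B-discontinuous} fails here: that perturbation creates a new jump near $x=1$, and a short computation shows the new term it produces in Proposition~\ref{prop:D_f-formula} exactly cancels the divergence of $\tfrac12\log(2\pi g_n(1))$, so that $D_{g_n}\to D_f$. Hence the perturbation must not introduce any new jump.

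For the construction I would set $k_n := k(n+1)$, so that $f \in \widetilde{\mathcal{P}}_{k_n}$ as well (refining the grid preserves every defining condition of $\widetilde{\mathcal{P}}_k$), and then change $f$ only on the last column $(1-1/k_n, 1]$: let $g_n = f$ on $[0, 1-1/k_n]$ and, on $(1-1/k_n, 1]$, let $g_n$ be the affine function joining the point $(1-1/k_n, f(1-1/k_n))$ to the point $(1, 1/k_n)$. The main work is to verify $g_n \in \widetilde{\mathcal{P}}_{k_n}$. Since $k_n > k$, the abscissa $1-1/k_n$ lies strictly inside the last linear piece $((k-1)/k, 1)$ of $f$, so $f$ is affine and continuous on $(1-1/k_n, 1]$ and has no jump there; consequently the junction at $1-1/k_n$ is continuous, $g_n$ has exactly the jump data of $f$, and $g_n$ remains upper-semicontinuous. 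The slope of the new affine piece equals $1 - k_n f(1-1/k_n)$, which is an integer because $k_n f(1-1/k_n) \in \mathbb{Z}$, and is at most $-1$ because $k_n f(1-1/k_n) > k_n \cdot (1/k_n) = 1$ by Proposition~\ref{prop:basic-P}(b); the numbers $k_n g_n(i/k_n)$ are integers for every $i \in [k_n]$ (for $i < k_n$ because $g_n = f$ there, and since $k_n g_n(1) = 1$); and $g_n$ stays strictly above the line $X+Y=1$ on $(0,1)$ and has $g_n(1) > 0$, because on the last column $g_n$ interpolates linearly between two points lying strictly above that line, using $f(1-1/k_n) > 1/k_n$ and $1/k_n > 0$. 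Weak monotonicity, rationality of the breakpoints, and the condition that $g_n \equiv 1$ near $0$ carry over immediately from $f$.

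Two short computations then finish the argument. First, $\|g_n - f\|_p^p = \int_{1-1/k_n}^{1} |g_n - f|^p\,dx \le 1/k_n \to 0$, so $g_n \to f$ in $(\widetilde{\mathcal{P}}, L^p)$. Second, by Proposition~\ref{prop:D_f-formula} and the identical jump data,
\[
D_{g_n} - D_f = \tfrac12\log(2\pi g_n(1)) - \tfrac12\log(2\pi f(1)) = -\tfrac12\log(k_n f(1)),
\]
and the right-hand side tends to $-\infty$ because $k_n f(1) = k(n+1) f(1) \to \infty$. Hence $g_n \to f$ while $D_{g_n} \not\to D_f$, which proves that $\boldsymbol{D}$ is discontinuous at every $f \in \widetilde{\mathcal{P}}$. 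I expect the one genuinely delicate point to be the verification that $g_n \in \widetilde{\mathcal{P}}_{k_n}$: the requirement that the descent have integer slope forces it into a single unit column; the requirement that it add no new jump forces it to be continuous at the junction with $f$; and the requirement that it never touch or cross the line $X+Y=1$ could not be met if the descent were spread over more than one column, since that line has already risen to height $1/k_n$ one column before the end. Confining the entire modification to the last column and realizing it as one steep affine descent to height $1/k_n$ is exactly what makes all three compatible.
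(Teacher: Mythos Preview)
Your proof is correct and takes a genuinely different route from the paper's. The paper perturbs $f$ by replacing it on $(1-1/n,1]$ with the \emph{constant} value $n^{-1}+n^{-2}$; this introduces a new jump at $1-1/n$, and the extra jump term in Proposition~\ref{prop:D_f-formula} contributes $\tfrac12\log\bigl(n^2(f(1-1/n)-1/n)\bigr)$, which combines with $\tfrac12\log(2\pi g_n(1))=\tfrac12\log(2\pi(n+1)/n^2)$ to give $D_{g_n}\to+\infty$. You instead refine the grid to $k_n=k(n+1)$ and replace only the last column by a single steep affine piece down to $1/k_n$, so that \emph{no} new jump is created and the entire finite sum in Proposition~\ref{prop:D_f-formula} is preserved; then $D_{g_n}-D_f=-\tfrac12\log(k_nf(1))\to-\infty$. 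Your verification that $g_n\in\widetilde{\mathcal{P}}_{k_n}$ is careful and correct: the key points are that $1-1/k_n$ lies strictly inside the last linear piece of $f$ (so the junction is continuous), that the new slope $1-k_nf(1-1/k_n)$ is a non-positive integer, and that the segment stays strictly above $X+Y=1$ by convexity. Your side remark that the slope-$(-1)$ construction from Proposition~\ref{prop:B-discontinuous} would produce exact cancellation is also correct and explains nicely why some care is needed. The only inaccuracy is in your final explanatory sentence: spreading the descent over two columns would \emph{not} in general force the graph to touch $X+Y=1$ (the midpoint value $(f(1-2/k_n)+1/k_n)/2$ still exceeds $1/k_n$); the real obstruction to a multi-column descent is the integrality of the slope, not the diagonal constraint. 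This does not affect the validity of your argument.
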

\begin{proof}
Let $f \in \widetilde{\mathcal{P}}$ be arbitrary. Let $k$ be a positive integer for which $f \in \widetilde{\mathcal{P}}_k$. For each positive integer $n \ge k+1$, define
\[
g_n(x) = \begin{cases}
f(x) & \text{if } x \le 1 - \frac{1}{n}, \\
n^{-1} + n^{-2} & \text{if } x > 1 - \frac{1}{n}.
\end{cases}
\]
Note that $\{g_n\}$ is a sequence of functions in $\widetilde{\mathcal{P}}$ that converges in $L^p$ to $f$. On the other hand, we obtain from Proposition~\ref{prop:D_f-formula} that
\[
D_{g_n} = \frac{1}{2} \log(n+1) + D_f + \frac{1}{2} \log \left( \frac{f(1-\frac{1}{n}) - \frac{1}{n}}{f(1)} \right).
\]
Since $\lim_{x \nearrow 1} f(x) = f(1)$, the third term on the right-hand side converges to $0$ as $n \to \infty$. The second term does not depend on $n$. The first term goes to $+\infty$, as $n \to \infty$. Therefore, $D_{g_n} \to \infty$, as $n \to \infty$.
\end{proof}

\subsection{Bounds for $B_f$ and $D_f$}\label{ss:bounds-B-D} In this subsection, we determine the extremal values for both $B_f$ and $D_f$ among all functions $f \in \widetilde{\mathcal{P}}_k$.

\begin{proposition}\label{prop:range-B}
Let $k$ be a positive integer. Let $f \in \widetilde{\mathcal{P}}_k$. Then,
\[
-\log k - \frac{1}{k} \le B_f \le -1.
\]
Both bounds are tight. The lower bound is attained if and only if $f$ is the function in Example~\ref{ex:B-low}. The upper bound is attained if and only if $f(x) = 1$ for every $x \in [0,1]$.
\end{proposition}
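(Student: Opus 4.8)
The plan is to work entirely from the integral formula $B_f = \int_0^1 \log(f(x)+x-1)\,dx$ of Theorem~\ref{thm:asymp-P}, using Proposition~\ref{prop:basic-loft}(d) to pin down the pointwise behavior of the integrand. The upper bound is the easy direction: by Proposition~\ref{prop:basic-loft}(d), $f(x)+x-1 \le x \le 1$ for all $x \in [0,1]$, hence $\log(f(x)+x-1) \le \log x$, and integrating gives $B_f \le \int_0^1 \log x\,dx = -1$. (This is exactly Proposition~\ref{prop:from-int-form-B}(a), restricted to $\widetilde{\mathcal P}_k$.) Equality forces $\log(f(x)+x-1) = \log x$ almost everywhere, i.e. $f(x) = 1$ for a.e. $x$; since every $f \in \widetilde{\mathcal P}_k$ is piecewise linear and upper-semicontinuous, this means $f \equiv 1$ on $[0,1)$, and the condition $k f(1) \in \mathbb Z$ with $f(1) \le 1$ forces $f(1) = 1$ as well.

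For the lower bound, let $L := \loft(f)$. By Proposition~\ref{prop:basic-loft}(d) we have $f(x)+x-1 \ge \min\{x, L\}$, but this estimate is too weak near $x = 1$ to be sharp; instead I would use that $f$ is weakly decreasing, so $g(x) := f(x)+x-1$ satisfies $g'(x) = f'(x)+1 \le 1$ wherever $f$ is differentiable — that is, $g$ is $1$-Lipschitz from above in the sense that $g(x) - g(y) \le x - y$ for $x \ge y$. Combined with $g(L) = L$ (Proposition~\ref{prop:basic-loft}(c) at $x = L$, using $f(L)=1$) and $g > 0$ throughout, the worst case for minimizing $\int_0^1 \log g\,dx$ subject to these constraints is the function whose graph drops as steeply as allowed, namely $g(x) = \max\{x, \, x - (1 - L)\cdot\mathbf{1}_{x>L}\}$... more carefully: the candidate extremal is $f(x) = \min\{1 + \tfrac{1}{k} - x, \, 1\}$ from Example~\ref{ex:B-low}, for which $L = 1/k$ and $g(x) = \min\{x, 1/k\}$ is constant equal to $1/k$ on $[1/k, 1]$. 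I would first record (as the paper does in Example~\ref{ex:B-low}) that this $f$ gives $B_f = \int_0^{1/k}\log x\,dx + \int_{1/k}^1 \log(1/k)\,dx = -\log k - \tfrac{1}{k}$.

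The heart of the argument is then a comparison: for arbitrary $f \in \widetilde{\mathcal P}_k$ with loft $L$, I claim $g(x) = f(x)+x-1 \ge \min\{x, 1/k\}$ pointwise. For $x \le L$ this is clear since $g(x) = x$; for $x > L$ it suffices to show $g(x) \ge 1/k$. Here I use that $L = \loft(f) \ge 1/k$: indeed $f \in \widetilde{\mathcal P}_k$ means $f$ is constant on $(0,1/k)$ equal to $1$ and $kf(1/k) \in \mathbb Z$ with $f(1/k) \le 1$; if $f(1/k) = 1$ one pushes further, and in any case the combinatorial structure forces the first place $g$ can dip below its maximal slope-$1$ growth to occur at a multiple of $1/k$, giving $g(x) \ge 1/k$ for all $x \in [1/k,1]$ — this is essentially Proposition~\ref{prop:basic-loft}(c) together with $L \ge 1/k$. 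Granting $g(x) \ge \min\{x,1/k\}$, monotonicity of $\log$ and of the integral yield $B_f = \int_0^1 \log g(x)\,dx \ge \int_0^1 \log \min\{x,1/k\}\,dx = -\log k - \tfrac1k$, which is the lower bound. Equality in this chain forces $g(x) = \min\{x,1/k\}$ a.e., hence (by piecewise-linearity and upper-semicontinuity) $f(x) = \min\{1+\tfrac1k - x, 1\}$ identically, which is the uniqueness claim.

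The main obstacle I anticipate is the pointwise lower bound $f(x)+x-1 \ge 1/k$ on $[1/k,1]$: Proposition~\ref{prop:basic-loft}(d) only delivers $\ge \min\{x, L\}$ with $L = \loft(f)$, and one must separately argue $L \ge 1/k$ for members of $\widetilde{\mathcal P}_k$ and that $g$ never drops below $1/k$ afterwards. The key leverage is the integrality constraints defining $\widetilde{\mathcal P}_k$ (each $kf(i/k) \in \mathbb Z$, integer slopes on each piece $((i-1)/k, i/k)$): these force the values $kg(i/k) = kf(i/k) + i - k$ to be integers, and since $g > 0$ everywhere and $g$ can decrease with slope at most $0$ relative to the line $y = x$ on each sub-interval of length $1/k$, the only way $g$ could fall below $1/k$ on $(i/k, (i+1)/k)$ is if $kg(i/k) = 0$, contradicting $g > 0$. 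Making this discrete argument precise — ideally via the lattice-path picture $\Path(f) \in \mathcal L_k$, where "$g(i/k) \ge 1/k$" translates to the path staying weakly below the hook through $(1,k-1)$ — is the one place the proof needs genuine care rather than routine estimation.
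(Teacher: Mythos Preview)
Your approach is essentially the paper's. For the upper bound you invoke exactly Proposition~\ref{prop:from-int-form-B}(a), as the paper does; for the lower bound the paper simply asserts $\loft(f)\ge 1/k$ for every $f\in\widetilde{\mathcal P}_k$ and then applies Proposition~\ref{prop:basic-loft}(d) to get $f(x)+x-1\ge\min\{x,1/k\}$, which is precisely the pointwise comparison you aim for. The bulk of your write-up is an attempt to justify the assertion $\loft(f)\ge 1/k$ that the paper leaves as a one-line remark.

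On that justification, your instinct (integrality at the grid points $i/k$) is right, but the reasoning as written has a small slip. Knowing that each $k\,g(i/k)$ is a positive integer (hence $g(i/k)\ge 1/k$) is not by itself enough, because $g=f+x-1$ can jump \emph{down} at $i/k$ (upper-semicontinuity and monotonicity of $f$), so you also need the right-limits $\lim_{x\searrow i/k} g(x)\ge 1/k$. This follows the same way: writing $f(x)=\mu_{i+1}x+\beta_{i+1}$ on $(\tfrac{i}{k},\tfrac{i+1}{k}]$ with $\mu_{i+1}\in\mathbb Z_{\le 0}$ and $k\beta_{i+1}\in\mathbb Z$, the quantity $k\cdot\lim_{x\searrow i/k}g(x)$ is an integer, and the defining condition $\lim_{x\searrow a}f(x)>1-a$ for $f\in\mathcal P$ makes it positive, hence $\ge 1$. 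With both one-sided endpoint values $\ge 1/k$ and $g$ affine on each open sub-interval, $g\ge 1/k$ on all of $[1/k,1]$; your remark about ``slope at most $0$ relative to the line $y=x$'' is not needed and does not point in the right direction. Once this is in place, your integral comparison and the equality analysis match the paper exactly.
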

\begin{proof}
The upper bound follows from Proposition~\ref{prop:from-int-form-B}(a). For the equality case of the upper bound, note that since $\lim_{x \nearrow 1} f(x) = f(1)$, if $f(x) = 1$ for all $x \in [0,1)$, then $f(1)$ must also be $1$.

For the lower bound, note that for any function $f \in \widetilde{\mathcal{P}}_k$, we have $\loft(f) \ge 1/k$. Therefore, by Proposition~\ref{prop:basic-loft}, we have
\[
B_f = \int_0^1 \log(f(x)+x-1) \, dx \ge \int_0^{1/k} \log(x) \, dx + \int_{1/k}^1 \log(1/k) \, dx
\]
Computing the last expression yields the desired lower bound. The equality case happens when $f(x) = 1$ on $[0,1/k]$ and $f(x) + x - 1 = 1/k$ on $(1/k,1]$. This is exactly if and only if $f$ is the function in Example~\ref{ex:B-low}.
\end{proof}

\begin{proposition}\label{prop:range-D}
Let $k$ be a positive integer. Let $f \in \widetilde{\mathcal{P}}_k$. Then,
\[
\frac{1}{2} \log \left( \frac{2\pi}{k} \right) \le D_f \le \frac{1}{2} \log \left( \frac{2^k \pi}{k} \right).
\]
Both bounds are tight. The lower bound is attained if and only if $f$ is continuous and $f(1) = 1/k$ (i.e., if and only if $f$ is in the Catalan-numerous family discussed in Remark~\ref{rmk:f(1)-is-1/k-family}). The upper bound is attained if and only if $f$ is the function in Example~\ref{ex:D-high}.
\end{proposition}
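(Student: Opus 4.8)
The plan is to avoid the integral formula for $D_f$ and work directly from the explicit formula of Proposition~\ref{prop:D_f-formula}. For $f \in \widetilde{\mathcal{P}}_k$ I set $a_i := k\bigl(f(i/k) + i/k - 1\bigr)$ for $0 \le i \le k$ and $b_i := k\bigl(\lim_{x\searrow i/k} f(x) + i/k - 1\bigr)$ for $0 \le i \le k-1$; both are integers (for $b_i$, note $\lim_{x\searrow i/k}f(x) = f((i+1)/k) - \mu/k$ where $\mu$ is the integer slope of the piece on $(i/k,(i+1)/k)$, so $k\lim_{x\searrow i/k}f(x) = kf((i+1)/k) - \mu \in \mathbb{Z}$). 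With this notation, and using $f(1) = a_k/k$, Proposition~\ref{prop:D_f-formula} becomes
\[
D_f = \tfrac12\log\!\frac{2\pi}{k} + \tfrac12\log\!\Bigl( a_k \prod_{i=1}^{k-1}\frac{a_i}{b_i} \Bigr),
\]
so the whole proposition is equivalent to the two-sided estimate $1 \le a_k\prod_{i=1}^{k-1}(a_i/b_i) \le 2^{k-1}$ together with an identification of the equality cases.

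First I would record the constraints on the $a_i,b_i$ coming from $f \in \widetilde{\mathcal{P}}_k$. Upper-semicontinuity of $f$ gives $b_i \le a_i$; monotonicity of $f$ gives $a_{i+1} \le b_i + 1$ (since $f((i+1)/k) \le \lim_{x\searrow i/k}f(x)$); the inequality $\loft(f) \ge 1/k$ — as used in the proof of Proposition~\ref{prop:range-B} — together with Proposition~\ref{prop:basic-loft}(d) gives $a_i \ge 1$ for $i \in [k]$ and $b_i \ge 1$ for $i \in [k-1]$; and $f \equiv 1$ on $[0,1/k]$ forces $a_1 = 1$. The lower bound is then immediate: $a_k \ge 1$ and each $a_i/b_i \ge 1$, so the product is $\ge 1$, with equality exactly when $a_k = 1$ and $a_i = b_i$ for every $i$ — i.e. $f(1) = 1/k$ and $f$ has no downward jump in $(0,1)$, which is precisely the continuous family of Remark~\ref{rmk:f(1)-is-1/k-family}.

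For the upper bound I would write $a_k\prod_{i=1}^{k-1}(a_i/b_i) = (a_1 a_2 \cdots a_k)/(b_1 b_2 \cdots b_{k-1})$ and bound the numerator using $a_1 = 1$ and $a_i \le b_{i-1}+1 \le 2b_{i-1}$ for $2 \le i \le k$:
\[
a_1 a_2 \cdots a_k \le 1\cdot\prod_{i=2}^k 2 b_{i-1} = 2^{k-1}\, b_1 b_2 \cdots b_{k-1},
\]
whence the product is $\le 2^{k-1}$ and $D_f \le \tfrac12\log(2^k\pi/k)$. Equality forces $a_i = b_{i-1}+1$ and $b_{i-1} = 1$ for all $2 \le i \le k$, hence $b_1 = \cdots = b_{k-1} = 1$ and $a_2 = \cdots = a_k = 2$; translating this back through $f(i/k)$ and $\lim_{x\searrow i/k}f(x)$, and using that a function in $\widetilde{\mathcal{P}}_k$ is determined by these grid values and right-limits (equivalently, that $\Func$ is a bijection), pins down a single function, which one checks is the step function of Example~\ref{ex:D-high}. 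The only part requiring genuine care is this dictionary between $f$ and the data $(a_i,b_i)$: checking integrality of $b_i$, that the four inequalities above really hold, and — in the equality analysis — that the extremal data is realized by exactly one $f \in \widetilde{\mathcal{P}}_k$. Every individual estimate is elementary; the content lies entirely in setting up the correct discrete model for $D_f$.
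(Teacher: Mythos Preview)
Your proof is correct. For the lower bound you are doing exactly what the paper does --- reading off from Proposition~\ref{prop:D_f-formula} that each jump term is $\ge 0$ and that $f(1)\ge 1/k$ --- just phrased through the integer variables $a_i,b_i$.

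For the upper bound, however, your argument is genuinely different from the paper's. The paper proceeds by a greedy transformation: starting from an arbitrary $f\in\widetilde{\mathcal{P}}_k$, it first flattens every negatively sloped piece (strictly increasing $D_f$), then sweeps the resulting step function from right to left, lowering each plateau to the minimal admissible height, again strictly increasing $D_f$ at each modification, until it reaches the function of Example~\ref{ex:D-high}. Your route is a one-line algebraic inequality: from $a_1=1$ and $a_i\le b_{i-1}+1\le 2b_{i-1}$ you get $\prod a_i \le 2^{k-1}\prod b_i$ directly, with equality forcing $b_{i-1}=1$ and $a_i=2$ for all $i\ge 2$. This is cleaner and makes the equality analysis immediate, at the cost of requiring you to set up the discrete dictionary $(a_i,b_i)$ carefully (which you do). The paper's transformation argument, by contrast, avoids that bookkeeping but is less quantitative --- it tells you the maximizer without ever writing down the inequality $a_i\le 2b_{i-1}$ that actually drives it.
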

\begin{proof}
For the lower bound, note that by the formula in Proposition~\ref{prop:D_f-formula}, it is immediate that
\[
D_f \ge \frac{1}{2} \log(2\pi f(1)).
\]
Since $f \in \widetilde{\mathcal{P}}_k$, we have $f(1) \ge 1/k$. Combining the two inequalities yields the desired lower bound. The equality is attained if and only if $f(1) = 1/k$ and there are no ``jumps." In other words, $f$ is continuous and $f(1) = 1/k$.

For the upper bound, we use the following strategy. We start with an arbitrary function $f \in \widetilde{\mathcal{P}}_k$, and then we keep transforming the function (if possible) in a number of steps so that in each step $D_f$ becomes larger. We claim that we can always end at the unique extremal function in Example~\ref{ex:D-high}.

First, start with any function $f \in \widetilde{\mathcal{P}}_k$. Consider whether $f$ has a linear piece with a strictly negative slope. If so -- say over $((i-1)/k,i/k]$, $f$ has a negative slope -- modify the function $f$ so that over $((i-1)/k,i/k]$, it becomes constant with the value $\lim_{x \searrow (i-1)/k} f(x)$ instead. The new function remains in $\widetilde{\mathcal{P}}_k$, and the value $D_f$ strictly increases.

Second, now assume that the function $f$ is already piecewise constant. Consider the value of $f$ over $(1-1/k,1]$. If it is strictly greater than $2/k$, change the value to $2/k$. This change strictly increases $D_f$. Then, consider the value of $f$ over $(1-2/k, 1-1/k]$. If it is strictly greater than $3/k$, change the value to $3/k$. Keep going in this manner from the right to the left. The resulting function is the unique function in Example~\ref{ex:D-high}. This proves the upper bound.

Note that in each step, if a change is made, the value of $D_f$ increases {\em strictly}. This shows that the equality case for the upper bound happens if and only if $f$ is the unique function in Example~\ref{ex:D-high}.
\end{proof}

\bigskip

\section{Cumulative X-rays of rook placements}\label{s:X-rays}
\subsection{Marginal Probabilities}
In the following, for a finite nonempty set $S$, we denote by $\Unif(S)$ the uniform distribution on $S$. The notation $X \sim \Unif(S)$ means that $X$ is a uniform random variable so that $\forall s \in S, \bbP(X = s) = |S|^{-1}$.

Let $n$ be a positive integer. Let $\lambda \in \cD_n$ be a partition. In what follows, let us consider our partitions in the French notation so that the boxes of $\lambda$ are bottom- and left-aligned and there are $\lambda_1$ boxes on the bottom row, $\lambda_2$ boxes on the second row from the bottom, and so on.

\begin{proposition}\label{prop:margin-prob}
Let $\pi \sim \Unif(\RP(\lambda))$. Let $i, j \in [n]$.
\begin{itemize}
    \item[(a)] If $\lambda_{n+1-i} < j$, then $\bbE\!\left( \pi_{ij} \right) = 0$.
    \item[(b)] If $\lambda_{n+1-i} \ge j$, and suppose $i' \in [n]$ is the smallest index such that $\lambda_{n+1-i'} \ge j$, then
\[
\bbE\!\left( \pi_{ij} \right) = \left( \prod_{i' \le t < i} \frac{\lambda_{n+1-t} - t}{\lambda_{n+1-t} - t + 1} \right) \cdot \frac{1}{\lambda_{n+1-i} - i + 1}.
\]
\end{itemize}
\end{proposition}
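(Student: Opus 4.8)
The plan is to realize a uniformly random rook placement $\pi \sim \Unif(\RP(\lambda))$ by an explicit row-by-row sampling procedure and then extract the marginal $\bbE(\pi_{ij}) = \bbP(\pi_{ij}=1)$ by conditioning. Recall that, in the matrix indexing, the $a^{\text{th}}$ row from the top may carry its rook only in columns $\{1,\ldots,\lambda_{n+1-a}\}$, and since $\lambda_n \le \lambda_{n-1} \le \cdots \le \lambda_1$ these admissible column-intervals are nested and expand as $a$ grows. I would generate $\pi$ by choosing, for $a = 1, 2, \ldots, n$ in this order, the column of row $a$ uniformly among the columns of $\{1,\ldots,\lambda_{n+1-a}\}$ not already used by rows $1,\ldots,a-1$. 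By nestedness, all $a-1$ previously used columns lie in $\{1,\ldots,\lambda_{n+1-a}\}$, so the number of admissible columns for row $a$ is always the deterministic quantity $\lambda_{n+1-a}-(a-1)$, which is $\ge 1$ because $\lambda \in \cD_n$ forces $\lambda_{n+1-a} \ge a$. Hence the procedure never stalls and outputs each $\pi \in \RP(\lambda)$ with probability $\prod_{a=1}^n \bigl(\lambda_{n+1-a}-(a-1)\bigr)^{-1} = \bigl(\#\RP(\lambda)\bigr)^{-1}$ by Equation~(\ref{eq:RP-prod}), so it indeed samples $\Unif(\RP(\lambda))$.

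Part (a) is then immediate, since $\lambda_{n+1-i} < j$ makes column $j$ inadmissible for row $i$ and forces $\pi_{ij}=0$. For part (b) assume $\lambda_{n+1-i} \ge j$ and let $i'$ be the least index with $\lambda_{n+1-i'} \ge j$; monotonicity of $t \mapsto \lambda_{n+1-t}$ gives $i' \le i$, and none of rows $1,\ldots,i'-1$ can use column $j$. I would track the events $F_t := \{\text{column } j \text{ is free just before row } t \text{ is placed}\}$ for $i' \le t \le i$: we have $\bbP(F_{i'}) = 1$, the recursion $F_{t+1} = F_t \cap \{\text{row } t \text{ avoids column } j\}$ for $i' \le t < i$, and $\{\pi_{ij}=1\} = F_i \cap \{\text{row } i \text{ picks column } j\}$. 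The crux is that on $F_t$, column $j$ is one of the $\lambda_{n+1-t}-(t-1)$ admissible columns for row $t$ and is chosen with probability $\tfrac{1}{\lambda_{n+1-t}-(t-1)}$ regardless of the earlier choices; hence $\bbP(\text{row } t \text{ avoids } j \mid F_t) = \tfrac{\lambda_{n+1-t}-t}{\lambda_{n+1-t}-t+1}$ for $i' \le t < i$ and $\bbP(\text{row } i \text{ picks } j \mid F_i) = \tfrac{1}{\lambda_{n+1-i}-i+1}$. Chaining these via the tower property over $t = i', \ldots, i-1$ and appending the final factor produces exactly the asserted product.

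The points that need care, rather than any real obstacle, are: (i) justifying that the number of admissible columns for row $t$ is the history-independent number $\lambda_{n+1-t}-(t-1)$ — this is precisely where nestedness of the admissible intervals is used, and it is what makes the conditional probabilities clean enough that conditioning on $F_t$ alone suffices; (ii) observing that column $j$ is certainly free at time $i'$, so the telescoping product of conditional probabilities genuinely begins at $t = i'$ with no stray leading factor; and (iii) keeping the indexing consistent, both the flip $a \leftrightarrow n+1-a$ between matrix rows and partition parts, and the off-by-one identity $\bigl(\lambda_{n+1-t}-(t-1)\bigr) - 1 = \lambda_{n+1-t}-t$. Beyond that the argument is a routine application of the chain rule.
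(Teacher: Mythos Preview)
Your proof is correct but takes a different route from the paper's. The paper argues by direct counting: it observes that the rook placements with $\pi_{ij}=1$ are in bijection with $\RP(\mu)$, where $\mu$ is the shape obtained by deleting the row and column through $(i,j)$, so that $\bbP(\pi_{ij}=1) = \#\RP(\mu)/\#\RP(\lambda)$; applying the product formula~(\ref{eq:RP-prod}) to both numerator and denominator, all factors for $t < i'$ and $t > i$ cancel and the surviving ratio is exactly the asserted product. Your approach instead realizes the uniform measure via a sequential row-filling sampler and extracts the marginal by the chain rule over the events $F_t$. Both arguments are short and rigorous; the paper's is slightly more economical since it reuses~(\ref{eq:RP-prod}) wholesale and avoids setting up a stochastic process, while yours has the bonus of producing an explicit sampler and of making the telescoping structure of the formula visible term by term --- a viewpoint that could be handy if one later needs finer conditional or joint marginals.
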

\begin{proof}
\textbf{(a)} It follows immediately from the definition of $\RP(\lambda)$ (cf. the beginning of Section~\ref{s:rook-placements}) that $\pi_{ij} = 0$.

\textbf{(b)} Suppose that $\mu$ is the partition obtained by removing the $i^{\text{th}}$ row from the top (the row corresponding to $\lambda_{n+1-i}$) and the $j^{\text{th}}$ column from the left from the Young diagram of $\lambda$. Observe that the probability that $\pi_{ij} = 1$ is $\#\RP(\mu)/\#\RP(\lambda)$.

The formula in Equation~(\ref{eq:RP-prod}) gives
\[
\#\RP(\mu) = \left( \prod_{t < i' \text{ or } t > i} (\lambda_{n+1-t} - (t-1))\right) \left( \prod_{i' \le t < i} (\lambda_{n+1-t} - t)\right).
\]
Therefore,
\[
\bbE\!\left( \pi_{ij} \right) = \bbP(\pi_{ij} = 1 ) = \frac{\#\RP(\mu)}{\#\RP(\lambda)} = \left( \prod_{i' \le t < i} \frac{\lambda_{n+1-t} - t}{\lambda_{n+1-t} - t + 1} \right) \cdot \frac{1}{\lambda_{n+1-i} - i + 1},
\]
as desired.
\end{proof}

The following corollary is immediate from Proposition~\ref{prop:margin-prob}.

\begin{corollary}\label{cor:marginal-bound}
If $\pi \sim \Unif(\RP(\lambda))$ and $i, j \in [n]$, then
\[
\bbE\!\left( \pi_{ij} \right) \le \frac{1}{\lambda_{n+1-i} - i + 1}.
\]
\end{corollary}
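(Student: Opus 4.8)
The plan is to split into the two cases of Proposition~\ref{prop:margin-prob} and bound, in each case, the quantity given there by $\frac{1}{\lambda_{n+1-i}-i+1}$. The key preliminary observation is that for $\pi \sim \Unif(\RP(\lambda))$ to make sense we need $\RP(\lambda) \neq \varnothing$, i.e.\ $\lambda \in \cD_n$; by the well-known characterization of $\cD_n$ recalled in Section~\ref{s:rook-placements}, this means $\lambda_m \ge n+1-m$ for every $m \in [n]$. Applying this with $m = n+1-i$ gives $\lambda_{n+1-i} \ge i$, so $\lambda_{n+1-i}-i+1$ is a positive integer; in particular the right-hand side of the claimed inequality is well-defined and strictly positive.

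In case (a) of Proposition~\ref{prop:margin-prob} (when $\lambda_{n+1-i} < j$), we have $\bbE(\pi_{ij}) = 0$, which is at most the positive number $\frac{1}{\lambda_{n+1-i}-i+1}$, so the bound holds. In case (b) (when $\lambda_{n+1-i} \ge j$), Proposition~\ref{prop:margin-prob} gives
\[
\bbE(\pi_{ij}) = \left( \prod_{i' \le t < i} \frac{\lambda_{n+1-t}-t}{\lambda_{n+1-t}-t+1} \right) \cdot \frac{1}{\lambda_{n+1-i}-i+1},
\]
where $i' \le i$ is the smallest index with $\lambda_{n+1-i'} \ge j$. For each $t$ with $i' \le t < i$, applying $\lambda_m \ge n+1-m$ with $m = n+1-t$ yields $\lambda_{n+1-t}-t \ge 0$, so each factor $\frac{\lambda_{n+1-t}-t}{\lambda_{n+1-t}-t+1}$ lies in $[0,1]$ (and the product is empty, equal to $1$, when $i' = i$). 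Hence the product is at most $1$, and the desired inequality follows.

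There is no genuine obstacle here; the argument is a direct consequence of Proposition~\ref{prop:margin-prob}. The only point requiring a moment's care is the invocation of $\lambda \in \cD_n$ to guarantee that every $\lambda_{n+1-t}-t$ is nonnegative and that $\lambda_{n+1-i}-i+1$ is strictly positive --- this is precisely what makes each fraction a number in $[0,1]$ and makes the bound in case (a) nonvacuous.
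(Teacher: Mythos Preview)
Your argument is correct and is precisely the intended one: the paper simply declares the corollary ``immediate from Proposition~\ref{prop:margin-prob},'' and your case split together with the observation that $\lambda \in \cD_n$ forces each factor $\frac{\lambda_{n+1-t}-t}{\lambda_{n+1-t}-t+1}$ to lie in $[0,1]$ is exactly what makes it immediate.
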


We also have the following result. Let $\lambda' \in \cD_n$ denote the conjugate partition of $\lambda$.
\begin{corollary}\label{cor:i1i2j1j2}
Let $\pi \sim \Unif(\RP(\lambda))$. Suppose that $i_1, i_2, j_1, j_2 \in [n]$ satisfy $\lambda_{n+1-i_1} \ge j_1$ and $\lambda_{n+1-i_2} \ge j_1$. If $\lambda_{i_1} = \lambda_{i_2}$ and $\lambda'_{j_1} = \lambda'_{j_2}$, then $\bbE\!\left( \pi_{i_1 j_1} \right) = \bbE\!\left( \pi_{i_2 j_2} \right)$.
\end{corollary}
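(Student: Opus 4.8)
\emph{Proof proposal.} The plan is to deduce this from two measure-preserving symmetries of $\Unif(\RP(\lambda))$, rather than from the product formula of Proposition~\ref{prop:margin-prob}. Since each $\pi_{ij}$ lies in $\{0,1\}$, we have $\bbE(\pi_{ij})=\bbP(\pi_{ij}=1)$, the probability that the random rook placement has a rook in cell $(i,j)$. First I would show that transposing two rows of the board that have the same length is a bijection $\RP(\lambda)\to\RP(\lambda)$, and likewise that transposing two columns of the same height is a bijection $\RP(\lambda)\to\RP(\lambda)$; the hypotheses of the corollary --- that rows $i_1$ and $i_2$ of the board have equal length, and that columns $j_1$ and $j_2$ have equal height ($\lambda'_{j_1}=\lambda'_{j_2}$) --- are exactly what these two constructions require.

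For the row step, let $\sigma\colon S_n\to S_n$ be the involution that swaps rows $i_1$ and $i_2$ of a permutation matrix. If $\pi\in\RP(\lambda)$, the entries of $\sigma(\pi)$ differing from those of $\pi$ all lie in rows $i_1$ and $i_2$; since these rows have equal length, the cells forbidden to $\sigma(\pi)$ in row $i_1$ are exactly those forbidden to $\pi$ in row $i_2$ (where $\pi$ vanishes), and symmetrically. Hence $\sigma$ restricts to a bijection of $\RP(\lambda)$, so $\sigma(\pi)\sim\Unif(\RP(\lambda))$ whenever $\pi\sim\Unif(\RP(\lambda))$. Reading off the $(i_2,j_1)$ entry of $\sigma(\pi)$, which equals $\pi_{i_1 j_1}$, gives $\bbE(\pi_{i_1 j_1})=\bbE(\pi_{i_2 j_1})$.

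The column step proceeds identically, but it hinges on one observation which I expect to be the only genuine point to check: because $\lambda$ is weakly decreasing, the rows meeting column $j$ of the board form the ``top block'' $\{\,i\in[n] : n+1-i\le\lambda'_j\,\}$, so $\lambda'_{j_1}=\lambda'_{j_2}$ forces columns $j_1$ and $j_2$ to occupy identical sets of rows --- equivalently, for every $i\in[n]$ one has $j_1>\lambda_{n+1-i}$ if and only if $j_2>\lambda_{n+1-i}$. Granting this, the transposition of columns $j_1$ and $j_2$ restricts to a bijection of $\RP(\lambda)$ by exactly the bookkeeping used in the row step, so $\bbE(\pi_{i_2 j_1})=\bbE(\pi_{i_2 j_2})$. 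Chaining the two steps yields $\bbE(\pi_{i_1 j_1})=\bbE(\pi_{i_2 j_1})=\bbE(\pi_{i_2 j_2})$, as claimed. (The inequalities $\lambda_{n+1-i_1}\ge j_1$ and $\lambda_{n+1-i_2}\ge j_1$ play no role in this argument; they only guarantee that all four cells lie on the board, so that the common value is the positive quantity computed in Proposition~\ref{prop:margin-prob}(b). Alternatively, one could verify the identity by expanding both sides with that product formula and cancelling matching factors, but the symmetry argument is shorter and more conceptual.)
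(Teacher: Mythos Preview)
Your argument is correct. You use a pair of measure-preserving bijections of $\RP(\lambda)$ (swapping two rows of equal board-length, then two columns of equal board-height) to obtain the chain $\bbE(\pi_{i_1 j_1})=\bbE(\pi_{i_2 j_1})=\bbE(\pi_{i_2 j_2})$, and the verification that each swap preserves $\RP(\lambda)$ is exactly as you describe.

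This is a genuinely different route from the paper's. The paper follows the template of Proposition~\ref{prop:margin-prob}(b): it writes $\bbE(\pi_{i_\ell j_\ell})=\#\RP(\mu^{(\ell)})/\#\RP(\lambda)$, where $\mu^{(\ell)}$ is the partition obtained by deleting the row and column through $(i_\ell,j_\ell)$, and then observes that the hypotheses force $\mu^{(1)}=\mu^{(2)}$. So the paper deduces equality from the \emph{equality of the deleted shapes}, whereas you deduce it from an \emph{automorphism of the ambient shape}. Your approach is more self-contained---it never invokes the product formula or the deletion construction---and it would survive in settings where no closed formula for $\#\RP(\mu)$ is available. The paper's approach, on the other hand, plugs directly into the machinery already set up in Proposition~\ref{prop:margin-prob} and makes the later use in Proposition~5.5 (where one actually wants to bound the ratio $\#\RP(\mu)/\#\RP(\lambda)$) feel like a single continuous computation. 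Your parenthetical remark that the product-formula route is available but longer is on point; the paper essentially takes that route but stops at ``$\mu^{(1)}=\mu^{(2)}$'' rather than expanding factors.
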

\begin{proof}
We proceed in a similar manner to how we proved Proposition~\ref{prop:margin-prob}(b). Namely, let $\mu^{(1)}$ (and $\mu^{(2)}$) denote the resulting partition from removing the $(i_1, j_1)$ (and resp. $(i_2, j_2)$) box (together with the row and the column) from $\lambda$. It is not hard to see that $\mu^{(1)} = \mu^{(2)}$. This finishes the proof.
\end{proof}

Like before, we may think of $\lambda$ as a Dyck path from $(0,n)$ to $(n,0)$, which we can write as the following concatenation
\[
\lambda = R^{r_1} D^{d_1} R^{r_2} D^{d_2} \cdots,
\]
where $R$ denotes a unit step to the right, and $D$ denotes a unit step down. The equation above means that $\lambda$ starts by going $r_1$ steps to the right, and then $d_1$ steps down, and so on. Let us refer to the quantity $\min\{r_1, d_1, r_2, d_2, \ldots\}$ as the {\em minimum run} of $\lambda$, denoted $\mr(\lambda)$. For instance, since $\lambda \in \cD_n$, we have $\mr(\lambda) \le n$, where the equality is attained if and only if $\lambda = R^n D^n$.

\begin{proposition}
Let $\pi \sim \Unif(\RP(\lambda))$. Let $t \le \mr(\lambda)$ be a positive integer. Then, for any $t$ different boxes $b_1 = (i_1, j_1)$, $b_2 = (i_2, j_2)$, $\ldots$, $b_t = (i_t, j_t) \in [n]^2$, the probability that $\pi$ is $1$ in all these $t$ boxes is
\[
\bbE\!\left( \pi_{b_1} \pi_{b_2} \cdots \pi_{b_t} \right) \le \frac{1}{\mr(\lambda) \cdot (\mr(\lambda) - 1) \cdots (\mr(\lambda) - t + 1)}.
\]
\end{proposition}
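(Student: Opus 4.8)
The plan is to expand $\bbE(\pi_{b_1}\cdots\pi_{b_t})$ as a telescoping product of conditional marginals and to bound each factor by the reciprocal of the minimum run of a successively smaller board. First one disposes of the degenerate cases: if two of the $b_\ell$ share a row or a column, or if some $b_\ell$ lies outside the Young diagram of $\lambda$, or if $\bbE(\pi_{b_1}\cdots\pi_{b_t}) = 0$, then the asserted inequality is immediate because its right-hand side is positive (this is where $t \le \mr(\lambda)$ enters). So one may assume the $t$ boxes occupy distinct rows and distinct columns of $\lambda$ and that the event $\{\pi_{b_1}=\cdots=\pi_{b_t}=1\}$ has positive probability. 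By the chain rule for conditional probabilities,
\[
\bbE\!\left(\pi_{b_1}\cdots\pi_{b_t}\right) = \prod_{s=1}^{t}\bbP\!\left(\pi_{b_s}=1 \,\middle|\, \pi_{b_1}=\cdots=\pi_{b_{s-1}}=1\right),
\]
and, exactly as in the proof of Proposition~\ref{prop:margin-prob}(b), conditioned on $\pi_{b_1}=\cdots=\pi_{b_{s-1}}=1$ the restriction of $\pi$ to the remaining rows and columns is uniform on $\RP(\mu^{(s-1)})$, where $\mu^{(0)} := \lambda$ and $\mu^{(s)}$ is obtained from $\mu^{(s-1)}$ by deleting the row and the column of $b_s$ and closing up the diagram. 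Positivity of the joint event forces $\mu^{(s)} \in \cD_{n-s}$ for every $s$, so the $s$-th factor above is a marginal probability for $\Unif(\RP(\mu^{(s-1)}))$.

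The argument then rests on two lemmas. \emph{Lemma A:} for every $\mu \in \cD_m$ and every box $(i,j)$ of its Young diagram, the marginal for $\pi \sim \Unif(\RP(\mu))$ satisfies $\bbE(\pi_{ij}) \le 1/\mr(\mu)$. \emph{Lemma B:} if $\mu \in \cD_m$ with $\mr(\mu) \ge 2$ and $\mu^{*}$ is obtained from $\mu$ by deleting the row and column of an in-shape box, then $\mu^{*} \in \cD_{m-1}$ and $\mr(\mu^{*}) \ge \mr(\mu)-1$. Granting these, a one-line induction gives $\mr(\mu^{(s-1)}) \ge \mr(\lambda)-(s-1)$ for $1 \le s \le t$ (Lemma~B applies at every step where it is needed, since $\mr(\mu^{(s-1)}) \ge \mr(\lambda)-(s-1) \ge 2$ whenever $s \le t-1$), so by Lemma~A the $s$-th factor is at most $1/(\mr(\lambda)-s+1)$, and multiplying over $s=1,\dots,t$ produces exactly the stated bound.

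For Lemma~A one starts from the closed formula of Proposition~\ref{prop:margin-prob}(b). Writing $a_t$ for (the width of matrix-row $t$) $-\,t+1$ and $i'$ for the first matrix-row of width at least $j$, the formula reads $\bbE(\pi_{ij}) = a_i^{-1}\prod_{t=i'}^{i-1}\frac{a_t-1}{a_t}$, with all factors positive in the nondegenerate case. Since matrix-row widths are weakly increasing one has $a_{t+1} \ge a_t-1$; pairing the numerator factor $a_t-1$ with the denominator factor $a_{t+1}$ telescopes the product and yields $\bbE(\pi_{ij}) \le 1/a_{i'}$. A short case check on $i'$ now gives $a_{i'} \ge \mr(\mu)$: if $i'=1$ then $a_{i'}$ is the width of the narrowest row, which is the length of the first run of the boundary path of $\mu$, hence at least $\mr(\mu)$; if $i' \ge 2$ then rows $i'-1$ and $i'$ have distinct widths, so these differ by at least $\mr(\mu)$, and combining with the $\cD_m$-inequality ``width of matrix-row $t$ is $\ge t$'' forces $a_{i'} \ge \mr(\mu)$ again. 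For Lemma~B one tracks the deletion through the list $(v_1<\cdots<v_p;\, c_1,\dots,c_p)$ of distinct row-widths and their multiplicities, using $\mr(\mu) = \min(v_1,\, v_2-v_1,\dots,v_p-v_{p-1},\, c_1,\dots,c_p)$: removing the column lowers $v_1$ by at most $1$ and exactly one gap by exactly $1$ while leaving all multiplicities unchanged, and removing the row lowers exactly one multiplicity by $1$; because $\mr(\mu) \ge 2$ no width-value collides with a neighbour, so no merging happens and each term of the minimum drops by at most $1$. That $\mu^{*} \in \cD_{m-1}$ follows since the joint event has positive probability.

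I expect Lemma~A to be the crux. The per-box estimate of Corollary~\ref{cor:marginal-bound} is too lossy for this purpose (for the full $n \times n$ board it allows marginals up to $1$ for the lowest rows, whereas every true marginal there equals $1/n = 1/\mr$), so the telescoping identity together with the run-length interpretation of $a_{i'}$ is the real content. Lemma~B and the verification that the conditional boards stay in $\cD$ are routine bookkeeping by comparison.
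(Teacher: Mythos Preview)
Your proof is correct and follows the same overall architecture as the paper's: reduce to the one-box estimate $\bbE(\pi_{ij})\le 1/\mr(\mu)$ (your Lemma~A) and iterate, using that deleting a row and column drops $\mr$ by at most one (your Lemma~B, which the paper simply asserts).

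The only genuine difference is in how Lemma~A is obtained. The paper does \emph{not} telescope the product formula; instead it first invokes Corollary~\ref{cor:i1i2j1j2} to replace $(i,j)$ by $(i',j)$ where $i'$ is the lowest matrix-row with the \emph{same width} as row~$i$, and then applies Corollary~\ref{cor:marginal-bound} at $i'$, obtaining $\bbE(\pi_{ij})\le 1/(\lambda_{n+1-i'}-i'+1)$; writing this denominator as $(\lambda_{n+1-i''}-i'')+(i''-i'+1)$ with $i''$ the top of that constant-width block, the first summand is $\ge 0$ by $\cD_n$ and the second is a $D$-run, hence $\ge\mr(\lambda)$. So your remark that Corollary~\ref{cor:marginal-bound} alone is ``too lossy'' is right, but the paper's fix is to precompose with the block-symmetry of Corollary~\ref{cor:i1i2j1j2} rather than to telescope. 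Your route lands on $1/a_{i'}$ for the \emph{column}-defined $i'$ and then bounds via an $R$-run gap; the paper lands on the \emph{row}-block $i'$ and bounds via a $D$-run length. Both are short and equally effective.
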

\begin{proof}
If any of the $t$ boxes is ``outside" the Young diagram of $\lambda$, we are done. Suppose that all these boxes are inside the Young diagram (i.e., $\lambda_{n+1-i_1} \ge j_1$ and so on). Since removing a box (together with its row and its column) reduces the minimum run by at most $1$, it suffices to show that if we remove one box $b$ (together with its row and its column) from $\lambda$ and obtain a new partition $\mu$, then
\begin{equation}\label{eq:mu-lambda-le-mr}
    \frac{\#\RP(\mu)}{\#\RP(\lambda)} \le \frac{1}{\mr(\lambda)}.
\end{equation}
Consider a box $b = (i,j)$ such that $\lambda_{n+1-i} \ge j$. Let $i'$ denote the smallest index for which $\lambda_{n+1-i} = \lambda_{n+1-i'}$. Let $i''$ denote the largest index for which $\lambda_{n+1-i} = \lambda_{n+1-i''}$. By the definition of the minimum run, we have
\begin{equation}\label{eq:i''i'}
    i'' - i' + 1 \ge \mr(\lambda).
\end{equation}
By Corollaries~\ref{cor:marginal-bound} and \ref{cor:i1i2j1j2}, we have
\begin{equation}
    \bbE\!\left( \pi_{ij} \right) = \bbE\!\left( \pi_{i'j} \right) \le \frac{1}{\lambda_{n+1-i'} - i' + 1} = \frac{1}{(\lambda_{n+1-i''} - i'') + i'' - i' + 1} \overset{(\ref{eq:i''i'})}{\le} \frac{1}{\mr(\lambda)},
\end{equation}
which implies (\ref{eq:mu-lambda-le-mr}).
\end{proof}

Since the minimum run also grows as we dilate partitions, we immediately have the following corollary.

\begin{corollary}
Let $t$ and $N$ be positive integers such that $t \le N$. Suppose that $\pi \sim \Unif(\RP(N \odot \lambda))$. Then, for any $t$ different boxes $b_1, b_2, \ldots, b_t \in [nN]^2$, we have
\[
\bbE\!\left( \pi_{b_1} \pi_{b_2} \cdots \pi_{b_t} \right) \le \frac{1}{N(N-1) \cdots (N-t+1)}.
\]
\end{corollary}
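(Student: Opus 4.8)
The plan is to reduce directly to the preceding proposition by tracking how the minimum run behaves under dilation. First I would pin down the Dyck-path description of the magnified partition: if $\lambda \in \cD_n$ is written as a Dyck path $\lambda = R^{r_1} D^{d_1} R^{r_2} D^{d_2} \cdots$, then replacing each box of $\lambda$ by an $N \times N$ array of boxes replaces each run by $N$ consecutive copies of it, so the Dyck path of $N \odot \lambda$ is $R^{N r_1} D^{N d_1} R^{N r_2} D^{N d_2} \cdots$. Consequently $\mr(N \odot \lambda) = N \cdot \mr(\lambda)$, and since $\mr(\lambda) \ge 1$ we obtain $\mr(N \odot \lambda) \ge N$.

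Next, given $t \le N$, we then have $t \le N \le \mr(N \odot \lambda)$, so the hypothesis of the preceding proposition is satisfied with $N \odot \lambda$ playing the role of $\lambda$. Applying it to the $t$ distinct boxes $b_1, \ldots, b_t \in [nN]^2$ yields
\[
\bbE\!\left( \pi_{b_1} \pi_{b_2} \cdots \pi_{b_t} \right) \le \frac{1}{\mr(N \odot \lambda)\,\big(\mr(N \odot \lambda) - 1\big) \cdots \big(\mr(N \odot \lambda) - t + 1\big)}.
\]

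Finally I would compare this with the claimed bound by monotonicity. Writing $M := \mr(N \odot \lambda) \ge N \ge t$, each of the $t$ factors satisfies $M - i \ge N - i \ge 1$ for $0 \le i \le t-1$, so $M(M-1)\cdots(M-t+1) \ge N(N-1)\cdots(N-t+1) > 0$, and passing to reciprocals gives exactly the desired inequality.

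The only step that needs any care — and the nearest thing to an obstacle — is the identity $\mr(N \odot \lambda) = N \cdot \mr(\lambda)$ (or even just the weaker $\mr(N \odot \lambda) \ge N$); once the run-by-run description of the dilated Dyck path is written down this is immediate, and the rest is a one-line invocation of the proposition together with the elementary monotonicity estimate above.
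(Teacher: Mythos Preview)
Your argument is correct and is essentially the same as the paper's: the paper simply remarks that the minimum run grows under dilation and deduces the corollary immediately from the preceding proposition, which is exactly what you have spelled out in detail.
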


The proposition below shows that these marginal probabilities $\bbP(\pi_{i,j} = 1)$ behave nicely in the following sense, when we dilate partitions.

\begin{proposition}\label{prop:margin-scale}
If $\pi \sim \Unif(\RP(N \odot \lambda))$ and $\pi^{\downarrow} \sim \Unif(\RP(\lambda))$. Then, for any $i, j \in [nN]$, we have
\[
\bbE\!\left( \pi_{i,j} \right) = \frac{1}{N} \cdot \bbE \! \left( \pi^{\downarrow}_{\left\lceil i/N \right\rceil, \left\lceil j/N \right\rceil}\right)
\]
\end{proposition}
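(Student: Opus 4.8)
The plan is to evaluate both sides explicitly via the marginal formula of Proposition~\ref{prop:margin-prob} and match them factor by factor. Write $I := \lceil i/N \rceil$ and $J := \lceil j/N \rceil$, and decompose the coordinates inside their blocks as $i = (I-1)N + p$ and $j = (J-1)N + q$ with $p, q \in [N]$. For a partition $\nu$ with $M$ parts and $s \in [M]$, it is convenient to set $a_s(\nu) := \nu_{M+1-s} - s + 1$; these are exactly the factors in $\#\RP(\nu) = \prod_{s=1}^{M} a_s(\nu)$ (cf.\ Equation~(\ref{eq:RP-prod})) and the denominators appearing in Proposition~\ref{prop:margin-prob}. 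The first step is to record the block structure of $N \odot \lambda$ in these terms: a short computation shows $(N\odot\lambda)_{nN+1-s} = N\lambda_{n+1-\lceil s/N\rceil}$ for all $s \in [nN]$, and hence, writing $s = (S-1)N + r$ with $S = \lceil s/N\rceil$ and $r \in [N]$,
\[
a_s(N\odot\lambda) = N \cdot a_S(\lambda) - (r-1).
\]
From this one also deduces the dichotomy that the box $(i,j)$ lies inside $N\odot\lambda$ if and only if $(I,J)$ lies inside $\lambda$ (i.e.\ $\lambda_{n+1-I} \ge J$): if $\lambda_{n+1-I} \ge J$ then $(N\odot\lambda)_{nN+1-i} = N\lambda_{n+1-I} \ge NJ \ge (J-1)N + q = j$, while if $\lambda_{n+1-I} \le J-1$ then $(N\odot\lambda)_{nN+1-i} \le (J-1)N < j$. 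When $(I,J)$ is outside $\lambda$, both sides of the asserted identity vanish by Proposition~\ref{prop:margin-prob}(a), so it remains to treat the inside case.

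Assume now that $(I,J)$ is inside $\lambda$ and let $I^{*}$ be the smallest index with $\lambda_{n+1-I^{*}} \ge J$, so that Proposition~\ref{prop:margin-prob}(b) gives $\bbE(\pi^{\downarrow}_{IJ}) = \big(\prod_{I^{*} \le t < I} \frac{a_t(\lambda) - 1}{a_t(\lambda)}\big)\cdot\frac{1}{a_I(\lambda)}$. Using the displayed identity and the same dichotomy (now applied to a general index $s$), I would check that the smallest index $i^{*} \in [nN]$ with $(N\odot\lambda)_{nN+1-i^{*}} \ge j$ is precisely $i^{*} = (I^{*}-1)N + 1$, the smallest index lying in the $I^{*}$-th block of $N$ rows. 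Then Proposition~\ref{prop:margin-prob}(b) applied to $N\odot\lambda$ expresses $\bbE(\pi_{ij})$ as $\big(\prod_{i^{*} \le s < i} \frac{a_s(N\odot\lambda) - 1}{a_s(N\odot\lambda)}\big)\cdot\frac{1}{a_i(N\odot\lambda)}$, where $a_i(N\odot\lambda) = N a_I(\lambda) - (p-1)$.

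The last step is the telescoping. Group the index range $i^{*} \le s < i$ into the complete row blocks $S = I^{*}, \ldots, I-1$ (each contributing the indices $r = 1, \ldots, N$) and the partial block $S = I$ (contributing $r = 1, \ldots, p-1$). Within a complete block, substituting the identity turns $\prod_{r=1}^{N} \frac{a_s(N\odot\lambda)-1}{a_s(N\odot\lambda)}$ into $\prod_{r=1}^{N} \frac{N a_S(\lambda) - r}{N a_S(\lambda) - r + 1}$, which telescopes to $\frac{N a_S(\lambda) - N}{N a_S(\lambda)} = \frac{a_S(\lambda) - 1}{a_S(\lambda)}$ --- exactly the corresponding factor in the formula for $\bbE(\pi^{\downarrow}_{IJ})$. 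Within the partial block, the analogous product telescopes to $\frac{N a_I(\lambda) - (p-1)}{N a_I(\lambda)}$, which cancels against $\frac{1}{a_i(N\odot\lambda)} = \frac{1}{N a_I(\lambda) - (p-1)}$ to leave $\frac{1}{N a_I(\lambda)}$. Multiplying all pieces together yields $\bbE(\pi_{ij}) = \big(\prod_{I^{*} \le S < I} \frac{a_S(\lambda) - 1}{a_S(\lambda)}\big)\cdot\frac{1}{N a_I(\lambda)} = \frac{1}{N}\,\bbE(\pi^{\downarrow}_{IJ})$, as desired. (Every denominator occurring here is positive: since $\lambda \in \cD_n$ we have $\lambda_{n+1-S} \ge S$ and hence $a_S(\lambda) \ge 1$ for all $S \in [n]$.) I do not expect a conceptual difficulty; the real work is purely the index bookkeeping --- translating the block structure of $N\odot\lambda$ into the $a_s$-coordinates and carrying out the two telescopings without off-by-one or sign errors. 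There is a shorter-looking alternative based on the symmetry of $N\odot\lambda$ under permuting rows, and columns, within each block (which already forces $\bbE(\pi_{ij})$ to depend only on $(I,J)$), but completing that route still requires relating the expected number of rooks in a block to $\bbE(\pi^{\downarrow}_{IJ})$, which is essentially the same computation, so the direct approach above seems cleanest.
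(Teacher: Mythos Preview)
Your argument is correct and is precisely the ``direct computation using Proposition~\ref{prop:margin-prob}(b)'' that the paper invokes without details: you expand both marginals via the product formula, identify the block structure $a_s(N\odot\lambda) = N a_{\lceil s/N\rceil}(\lambda) - (r-1)$, and telescope. There is nothing to add; your write-up simply makes explicit what the paper leaves to the reader.
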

\begin{proof}
This follows from a direct computation using Proposition~\ref{prop:margin-prob}(b).
\end{proof}

\subsection{Cumulative X-rays}\label{ss:cumulative-X-rays}
Let $n$ be a positive integer. Suppose that a permutation $\pi \in S_n$ is given. The {\em cumulative X-ray} of $\pi$ is the piecewise constant function $\xi_{\pi}:[0,2n] \to \mathbb{R}$ given by
\[
\xi_{\pi}(t) := \sum_{\substack{i,j \in [n] \\ i+j \le t}} \pi_{ij}.
\]
We also define the normalized version of cumulative X-rays. The {\em normalized cumulative X-ray} of $\pi \in S_n$ is the piecewise constant function $\widetilde{\xi}_{\pi}:[0,2] \to [0,1]$ given by
\[
\widetilde{\xi}_{\pi}(t) := \frac{1}{n} \cdot \xi_{\pi}(nt).
\]

The following is a counting lemma which is easy to prove.

\begin{lemma}\label{lemma:counting}
For each real number $\phi \in \mathbb{R}$ and each positive integer $N \in \mathbb{Z}_{\ge 1}$, let
\[
S(\phi;N) := \#\left\{ (x,y) \in [N] \times [N] \, | \, x+y \le \phi N \right\}.
\]
Then, we have the following.
\begin{itemize}
    \item[(a)] If $\phi \le 0$, then $S(\phi;N) = 0$.
    \item[(b)] If $0 \le \phi \le 1$, then
    \[
    S(\phi;N) = \binom{\left\lfloor \phi N \right\rfloor}{2} = \frac{\phi^2}{2} N^2 + O(\phi N).
    \]
    \item[(c)] If $1 \le \phi \le 2$, then
    \[
    S(\phi;N) = N^2 - \binom{2N-\left\lfloor \phi N \right\rfloor + 1}{2} = \left( - \frac{\phi^2}{2} + 2 \phi - 1 \right) N^2 + O(\phi N).
    \]
    \item[(d)] If $\phi \ge 2$, then $S(\phi;N) = N^2$.
\end{itemize}
In (b) and (c), the implicit constants are absolute.
\end{lemma}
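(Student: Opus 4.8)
The plan is to count, for a fixed threshold $\phi$, the number of lattice points $(x,y)$ in the square $[N]\times[N]$ lying weakly below the line $x+y=\phi N$, by splitting on the position of this line relative to the square and using the elementary fact that $\#\{(x,y)\in\mathbb{Z}_{\ge1}^2 : x+y\le M\}=\binom{\lfloor M\rfloor-1}{2}$ when $M\ge 2$ (and $0$ when $M<2$), which follows by summing $\sum_{x\ge 1}(\lfloor M\rfloor-x-1)^+$ or, more cleanly, by noting that solutions to $x+y\le M$ with $x,y\ge 1$ biject with size-$2$ subsets of $\{1,\dots,\lfloor M\rfloor-1\}$.

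First, part (a): if $\phi\le 0$ then $\phi N\le 0<x+y$ for every $(x,y)\in[N]^2$, so $S(\phi;N)=0$; and part (d): if $\phi\ge 2$ then $x+y\le 2N\le \phi N$ always, so $S(\phi;N)=N^2$. For part (b), with $0\le\phi\le1$, I would observe that the constraint $x+y\le\phi N$ with $x,y\ge 1$ already forces $x,y\le\phi N\le N$, so the ceiling constraint from the square is vacuous; hence $S(\phi;N)=\#\{(x,y)\in\mathbb{Z}_{\ge1}^2 : x+y\le\phi N\}=\binom{\lfloor\phi N\rfloor-1}{2}$. Wait --- the statement writes $\binom{\lfloor\phi N\rfloor}{2}$; I would double-check the indexing convention (solutions to $x+y\le M$ in positive integers number $\binom{\lfloor M\rfloor-1}{2}$, matching $M=2\mapsto 0$, $M=3\mapsto 1$), and reconcile with the off-by-one in the paper's display, possibly by reading their $\binom{\cdot}{2}$ with the shifted argument; in any case the asymptotic expansion $\binom{\lfloor\phi N\rfloor}{2}=\tfrac12\lfloor\phi N\rfloor^2+O(\phi N)=\tfrac{\phi^2}{2}N^2+O(\phi N)$ holds since $\lfloor\phi N\rfloor=\phi N+O(1)$ and $(\phi N+O(1))^2=\phi^2N^2+O(\phi N)$, with absolute implicit constant.

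For part (c), with $1\le\phi\le2$, I would count by complementation: a point $(x,y)\in[N]^2$ fails the constraint precisely when $x+y\ge\phi N+1$, equivalently (setting $x'=N+1-x$, $y'=N+1-y\in[N]$) when $x'+y'\le 2N+2-\phi N-1=(2-\phi)N+1$. Wait --- I should be careful with the strict/weak inequality and with whether $\phi N$ is an integer; the clean route is $\#\{(x,y)\in[N]^2 : x+y>\phi N\}=\#\{(x',y')\in\mathbb{Z}_{\ge1}^2 : x'+y'< (2-\phi)N+2\}$, which since $0\le 2-\phi\le1$ has both coordinates automatically $\le N$, giving $\binom{2N-\lfloor\phi N\rfloor+1}{2}$ after tracking floors. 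Then $S(\phi;N)=N^2-\binom{2N-\lfloor\phi N\rfloor+1}{2}$, and expanding $\binom{2N-\lfloor\phi N\rfloor+1}{2}=\tfrac12(2N-\phi N)^2+O(\phi N)=\tfrac{(2-\phi)^2}{2}N^2+O(\phi N)$ yields $N^2-\tfrac{(2-\phi)^2}{2}N^2=\bigl(-\tfrac{\phi^2}{2}+2\phi-1\bigr)N^2+O(\phi N)$.

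The only real obstacle is bookkeeping with floors and the strict-versus-weak inequality at the boundary $x+y=\phi N$ (which matters when $\phi N\in\mathbb{Z}$); I would handle this uniformly by always writing the count as size-$2$ subsets of an explicit integer interval and only then passing to the asymptotic form, so that every floor is absorbed into the $O(\phi N)$ error with an absolute constant. Everything else is a one-line elementary computation.
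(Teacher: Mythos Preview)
The paper does not actually prove this lemma; it merely introduces it as ``a counting lemma which is easy to prove'' and states the result. So there is nothing to compare your approach against, and your outline is exactly the kind of elementary argument the authors are gesturing at.

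That said, your off-by-one suspicion in part (b) is pointing the wrong way: the paper's formula $\binom{\lfloor\phi N\rfloor}{2}$ is correct, and your claimed count $\binom{\lfloor M\rfloor-1}{2}$ is not. Your test cases are miscomputed: for $M=2$ the pair $(1,1)$ satisfies $1+1\le 2$, so the count is $1$, not $0$; for $M=3$ the pairs $(1,1),(1,2),(2,1)$ give count $3$, not $1$. (Your numbers match the \emph{strict} inequality $x+y<M$, but the lemma uses $\le$.) The correct derivation is simply
\[
\#\{(x,y)\in\mathbb{Z}_{\ge 1}^2 : x+y\le M\}=\sum_{s=2}^{\lfloor M\rfloor}(s-1)=\binom{\lfloor M\rfloor}{2}.
\]
Your complementation argument for (c) is fine and lands on the paper's formula once you use $x+y>\phi N \iff x+y\ge \lfloor\phi N\rfloor+1$ (since $x+y$ is an integer), giving $x'+y'\le 2N-\lfloor\phi N\rfloor+1$ after the reflection. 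The asymptotic expansions are unaffected by any of this, as you note.
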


For convenience, let us reserve the symbol $\fc$. In the following, we let $\fc$ denote the function $\fc: \mathbb{R} \to \mathbb{R}$ given by
\[
\fc(t) :=
\begin{cases}
0 & \text{if } t \le 0, \\
\frac{t^2}{2} & \text{if } 0 < t \le 1, \\
-\frac{t^2}{2} + 2t - 1 & \text{if } 1 < t \le 2, \text{ and} \\
1 & \text{if } t \ge 2.
\end{cases}
\]
Thus, Lemma~\ref{lemma:counting} says that
\begin{equation}\label{eq:counting-s-c}
    S(\phi;N) = \fc(\phi)N^2 + O(N),
\end{equation}
for positive integers $N$.

For each partition $\lambda \in \cD_n$, let us define a function $\fm_{\lambda} : [0,2] \to [0,1]$ by
\begin{equation}\label{eq:limit-shape}
    \fm_{\lambda}(t) := \frac{1}{n} \underset{\substack{x,y \in (0,n] \\ x+y \le nt }}{\int\int} \bbE\!\left( \pi_{\left\lceil x \right\rceil, \left\lceil y \right\rceil}\right) \, dx \, dy,
\end{equation}
where $\pi \sim \Unif(\RP(\lambda))$.

\begin{proposition}\label{prop:conv-of-means}
Let $\lambda \in \cD_n$ and $N \in \mathbb{Z}_{\ge 1}$. Suppose that $\pi \sim \Unif(\RP(N \odot \lambda))$. Then, for any real number $t \in [0,2]$, we have
\[
\bbE\!\left( \widetilde{\xi}_{\pi}(t) \right) = \fm_{\lambda}(t) + O_{\lambda}\!\left( \frac{1}{N} \right).
\]
\end{proposition}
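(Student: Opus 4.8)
The plan is to express both $\bbE(\widetilde{\xi}_{\pi}(t))$ and $\fm_{\lambda}(t)$ as sums/integrals of the marginal expectations $\bbE(\pi_{ij})$ over the region $\{i+j \le \cdot\}$, use the scaling relation of Proposition~\ref{prop:margin-scale} to convert the $N\odot\lambda$ quantity into one involving $\bbE(\pi^{\downarrow}_{ab})$ with $\pi^{\downarrow} \sim \Unif(\RP(\lambda))$, and then recognize the resulting expression as a Riemann sum for the double integral defining $\fm_{\lambda}(t)$, controlling the discretization error.

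In detail: first I would write, by linearity of expectation, $\bbE(\widetilde{\xi}_{\pi}(t)) = \frac{1}{nN}\sum_{i,j\in[nN],\, i+j\le nNt} \bbE(\pi_{ij})$. Applying Proposition~\ref{prop:margin-scale}, this becomes $\frac{1}{nN^2}\sum_{i,j\in[nN],\, i+j\le nNt} \bbE(\pi^{\downarrow}_{\lceil i/N\rceil,\lceil j/N\rceil})$. Now I would group the $(i,j)$ according to the value of the pair $(a,b) = (\lceil i/N\rceil,\lceil j/N\rceil) \in [n]^2$: for each such $(a,b)$, the $i$-values range over an interval of length $N$ (namely $((a-1)N, aN]$) and similarly for $j$, so the number of $(i,j)$ in the block mapping to $(a,b)$ and satisfying $i+j\le nNt$ is exactly $S$-type count from Lemma~\ref{lemma:counting}, up to translation — more precisely it equals $\#\{(x,y)\in[N]^2 : (x+(a-1)N)+(y+(b-1)N)\le nNt\}$, which is a count of the same flavor as in the lemma with an appropriate shifted threshold $\phi$. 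Blocks entirely inside $\{i+j\le nNt\}$ contribute the full $N^2$; blocks entirely outside contribute $0$; only the $O(n)$ ``boundary'' blocks straddling the line $i+j = nNt$ contribute a fractional count, and each of those contributes at most $N^2 \cdot \bbE(\pi^{\downarrow}_{ab}) \le N^2$ to the sum, hence at most $O_{\lambda}(nN^2)$ in total, i.e. $O_{\lambda}(1/N) \cdot nN^2 \cdot \frac{1}{nN^2}$ — wait, more carefully: after dividing by $nN^2$, the boundary blocks contribute $O_{\lambda}(1/N)$ since there are $O_\lambda(N)$ of them... actually the number of boundary blocks is $O(n) = O_\lambda(1)$ since $n$ is fixed, so their total contribution to $\frac{1}{nN^2}\sum$ is $O_\lambda(N^2)/(nN^2) = O_\lambda(1/n)$; this is not small. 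The fix is that the boundary blocks contribute a count between $0$ and $N^2$ each, while the ``correct'' continuous contribution is the area of the block's intersection with the triangle, which differs from the discrete count by only $O(N)$ per block (the count $S(\phi;N)$ approximates $\fc(\phi)N^2$ with error $O(N)$ by Lemma~\ref{lemma:counting}, uniformly). So the discrete sum $\frac{1}{nN^2}\sum$ differs from $\frac{1}{nN^2}\sum_{(a,b)} \bbE(\pi^{\downarrow}_{ab}) \cdot (\text{area of block}\cap\{x+y\le nNt\} \text{ rescaled})$ by $O_\lambda(1)\cdot O(N)/(nN^2) = O_\lambda(1/N)$, and the latter sum is exactly $\fm_{\lambda}(t)$ after the change of variables $x = i/N$, $y = j/N$ matching the definition in Equation~(\ref{eq:limit-shape}), since on the block mapping to $(a,b)$ the integrand $\bbE(\pi^{\downarrow}_{\lceil x\rceil,\lceil y\rceil})$ is constant equal to $\bbE(\pi^{\downarrow}_{ab})$.

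The main obstacle I anticipate is the careful bookkeeping of the boundary blocks and the uniformity of the error: I need the $O(N)$ error in Lemma~\ref{lemma:counting} to hold uniformly over all the shifted thresholds $\phi$ arising from the various blocks, and I need to be sure that summing $O(N)$ over the $O_\lambda(1)$ relevant blocks (those intersecting but not contained in the triangle — there are at most $O(n)$ of them along the antidiagonal) yields total error $O_\lambda(N)$, which after the $\frac{1}{nN^2}$ normalization is $O_\lambda(1/N)$ as claimed. One should also double-check the $t=0$ and $t=2$ (and $t$ near the endpoints) edge cases, and confirm that the improper nature of any integrals does not cause trouble — here it does not, since $\bbE(\pi^{\downarrow}_{ab})$ is a bounded function on a bounded region. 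Once the block decomposition and the uniform counting estimate are in place, the identification with $\fm_{\lambda}(t)$ is a matter of unwinding the change of variables in Equation~(\ref{eq:limit-shape}).
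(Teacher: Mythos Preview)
Your proposal is correct and follows essentially the same approach as the paper: apply Proposition~\ref{prop:margin-scale}, group the sum over $(i,j)\in[nN]^2$ by blocks indexed by $(a,b)=(\lceil i/N\rceil,\lceil j/N\rceil)$, recognize each block-count as $S(nt-a-b+2;N)$ via translation, and use the uniform estimate $S(\phi;N)=\fc(\phi)N^2+O(N)$ from Lemma~\ref{lemma:counting} together with the identification $\fm_\lambda(t)=\frac{1}{n}\sum_{a,b}\bbE(\pi^{\downarrow}_{a,b})\,\fc(nt-a-b+2)$. The paper's write-up is slightly cleaner in that it applies the $O(N)$ error uniformly to all $n^2$ blocks rather than singling out boundary blocks, but your argument is the same in substance.
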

\begin{proof}
From Proposition~\ref{prop:margin-scale}, we have
\begin{align}
    \bbE\!\left( \widetilde{\xi}_{\pi}(t) \right) &= \frac{1}{nN^2} \sum_{\substack{i,j \in [nN] \\ i+j \le nNt}} \bbE \! \left( \pi^{\downarrow}_{\left\lceil i/N \right\rceil, \left\lceil j/N \right\rceil}\right) \\
    &= \frac{1}{nN^2} \sum_{a=1}^n \sum_{b=1}^n \sum_{\substack{i,j \in [nN] \\ i+j \le nN t \\ (\left\lceil i/N \right\rceil, \left\lceil j/N \right\rceil) = (a,b)}} \bbE \! \left( \pi^{\downarrow}_{a,b}\right). \label{eq:triple-a-b}
\end{align}

Note that the innermost summation in the last expression above is over pairs $(i,j)$ of positive integers such that (i) $i,j \le nN$, (ii) $i+j \le nNt$, (iii) $aN - N +1 \le i \le aN$, and (iv) $bN - N + 1 \le j \le bN$. By translation, the number of such pairs is exactly the number of $(x,y) \in [N]^2$ such that $x+y \le (nt-a-b+2)N$. By our discussion above, the number is exactly $S(nt-a-b+2;N)$.

Therefore, Equation~(\ref{eq:counting-s-c}) gives
\begin{align}
    \bbE\!\left( \widetilde{\xi}_{\pi}(t) \right) &= \frac{1}{n} \sum_{a = 1}^n \sum_{b = 1}^n \bbE \! \left( \pi^{\downarrow}_{a,b}\right) \left( \fc(nt-a-b+2) + O\!\left( \frac{1}{N} \right)\right) \\
    &= \frac{1}{n} \sum_{a = 1}^n \sum_{b = 1}^n \bbE \! \left( \pi^{\downarrow}_{a,b}\right)\fc(nt-a-b+2) + O_{\lambda}\!\left( \frac{1}{N} \right).
\end{align}
On the other hand, by the definition of $\fm_{\lambda}$, we have
\begin{align}
    \fm_{\lambda}(t) &= \frac{1}{n} \sum_{a = 1}^n \sum_{b = 1}^n \bbE\!\left( \pi^{\downarrow}_{a,b} \right) \int_{b-1}^b \int_{a-1}^a \boldsymbol{1}(x+y \le nt) \, dx \, dy \\
    &= \frac{1}{n} \sum_{a = 1}^n \sum_{b = 1}^n \bbE\!\left( \pi^{\downarrow}_{a,b} \right) \int_0^1 \int_0^1 \boldsymbol{1}(x+y \le nt-a-b+2) \, dx \, dy \label{eq:simple-change-a-b} \\
    &= \frac{1}{n} \sum_{a = 1}^n \sum_{b = 1}^n \bbE \! \left( \pi^{\downarrow}_{a,b}\right)\fc(nt-a-b+2), \label{eq:double-a-b}
\end{align}
where $\boldsymbol{1}$ denotes the indicator function, and Equation~(\ref{eq:simple-change-a-b}) follows from simple changes of variables $x \mapsto x+a-1$ and $y \mapsto y+b-1$. Combining Equations~(\ref{eq:triple-a-b}) and (\ref{eq:double-a-b}) finishes the proof.
\end{proof}

In particular, Proposition~\ref{prop:conv-of-means} implies a convergence of expectations. If for each $N$, we have a random variable $\pi^{(N)} \sim \Unif(\RP(N \odot \lambda))$, then for any fixed $t \in [0,2]$, the sequence $\left\{ \bbE\!\left( \widetilde{\xi}_{\pi^{(N)}}(t) \right) \right\}_{N = 1}^{\infty}$ converges to $\fm_{\lambda}(t)$.

\smallskip

The author of the present paper gives the following conjecture about this function $\fm_{\lambda}$.

\begin{conjecture}\label{conj}
Let $\lambda \in \cD_n$ be a fixed partition. Fix a positive real number $\varepsilon > 0$. Suppose that $\pi \sim \Unif(\RP(N \odot \lambda))$. Then,
\[
\bbP\!\left( \sup_{t \in [0,2]} \left| \widetilde{\xi}_{\pi}(t) - \fm_{\lambda}(t) \right| < \varepsilon \right) \to 1,
\]
as $N \to \infty$.
\end{conjecture}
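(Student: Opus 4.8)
The plan is to strengthen Proposition~\ref{prop:conv-of-means}, which gives convergence of the means $\bbE(\widetilde{\xi}_{\pi}(t))$, into a concentration statement, and then to pass from pointwise to uniform control using that $\widetilde{\xi}_{\pi}$ is monotone. Write $m := nN$; for $\pi \sim \Unif(\RP(N \odot \lambda))$ let $\sigma(i) \in [m]$ denote the column of the rook in row $i$, so that
\[
\xi_{\pi}(mt) = \sum_{\substack{i,j \in [m] \\ i+j \le mt}} \pi_{ij} = \sum_{i=1}^{m} X_i, \qquad X_i := \boldsymbol{1}\bigl(i + \sigma(i) \le mt\bigr) \in \{0,1\}.
\]
The argument has three steps: (i) for each fixed $t \in [0,2]$, show $\Var(\widetilde{\xi}_{\pi}(t)) = O_{\lambda}(1/N)$; (ii) deduce pointwise concentration by Chebyshev; (iii) upgrade to the uniform statement of Conjecture~\ref{conj} by a Pólya-type argument.

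The heart is step (i). Decomposing,
\[
\Var(\xi_{\pi}(mt)) = \sum_{i=1}^{m} \Var(X_i) + \sum_{i \neq i'} \operatorname{Cov}(X_i, X_{i'}) \le \frac{m}{4} + \sum_{i \neq i'} \operatorname{Cov}(X_i, X_{i'}),
\]
so it would suffice to show that the row-occupation indicators $X_i$ are pairwise negatively correlated under $\Unif(\RP(N \odot \lambda))$ --- the rook-placement counterpart of the negative association of uniform permutations --- since then $\Var(\xi_{\pi}(mt)) \le m/4 = O_{\lambda}(N)$ and hence $\Var(\widetilde{\xi}_{\pi}(t)) = O_{\lambda}(1/N)$. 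If one wants to avoid proving negative correlation outright, it is enough to bound $\sum_{i \neq i'} \operatorname{Cov}(X_i, X_{i'}) = O_{\lambda}(N)$, and this should follow from a direct estimate: conditioning on $\sigma(i) = v$ replaces $N \odot \lambda$, as in the proof of Proposition~\ref{prop:margin-prob}(b), by the Young diagram obtained by deleting its $i$-th row and $v$-th column, and one expects that by Proposition~\ref{prop:margin-prob}(b) together with the minimum-run bound~(\ref{eq:mu-lambda-le-mr}) (applied to $N \odot \lambda$, whose minimum run is at least $N$) each single-box marginal $\bbP(\sigma(i') = c)$ is changed by only $O_{\lambda}(1/N^2)$ under such a deletion; summing this over the $O_{\lambda}(N)$ columns $c \le mt - i'$ gives $\bigl| \bbP(\sigma(i') \le mt - i' \mid \sigma(i) = v) - \bbP(\sigma(i') \le mt - i') \bigr| = O_{\lambda}(1/N)$, hence $\operatorname{Cov}(X_i, X_{i'}) = O_{\lambda}(1/N)$, and the sum over the $O_{\lambda}(N^2)$ pairs is $O_{\lambda}(N)$.

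Granting step (i), Chebyshev's inequality and Proposition~\ref{prop:conv-of-means} give, for each fixed $t$ and $\delta > 0$, the bound $\bbP\bigl(|\widetilde{\xi}_{\pi}(t) - \fm_{\lambda}(t)| \ge \delta\bigr) = O_{\lambda, \delta}(1/N)$. For step (iii), note that $\widetilde{\xi}_{\pi}$ is non-decreasing and $[0,1]$-valued while $\fm_{\lambda} : [0,2] \to [0,1]$ is continuous and non-decreasing by~(\ref{eq:limit-shape}). Given $\varepsilon > 0$, fix a grid $0 = t_0 < t_1 < \cdots < t_M = 2$ (with $M$ depending only on $\varepsilon$ and $\lambda$) such that $\fm_{\lambda}(t_k) - \fm_{\lambda}(t_{k-1}) < \varepsilon/3$ for all $k$, using uniform continuity of $\fm_{\lambda}$. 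On the event $\bigcap_{k=0}^{M} \{ |\widetilde{\xi}_{\pi}(t_k) - \fm_{\lambda}(t_k)| < \varepsilon/3 \}$, monotonicity of both functions traps $\widetilde{\xi}_{\pi}(t)$ between $\widetilde{\xi}_{\pi}(t_{k-1})$ and $\widetilde{\xi}_{\pi}(t_k)$ on each interval $[t_{k-1}, t_k]$, which forces $\sup_{t \in [0,2]} |\widetilde{\xi}_{\pi}(t) - \fm_{\lambda}(t)| < \varepsilon$; and by a union bound over the $M+1$ grid points the complementary event has probability at most $(M+1) \cdot O_{\lambda, \varepsilon}(1/N) \to 0$ as $N \to \infty$. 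This is precisely the conjecture.

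The step I expect to be the main obstacle is the dependence input to step (i): either that the indicators $X_i$ are pairwise negatively correlated under the rook-placement measure on the board $N \odot \lambda$, or the weaker but still nontrivial $O_{\lambda}(1/N^2)$ perturbation bound on single-box marginals stated above. The negative association of uniform permutations is classical, but a rook placement of $N \odot \lambda$ is a uniform permutation conditioned on the constraints $\{ \sigma(i) \le \ell_i \}$ (with $\ell_i$ the length of row $i$), and such conditioning need not preserve negative association; a direct argument seems to require a monotone coupling between rook placements of a board and of that board with one column deleted, exploiting that two adjacent columns of equal height are interchangeable. The covariance route via the product formula~(\ref{eq:RP-prod}) is more computational and must contend with the case where deleting a row or column causes two length-classes of the board to merge, which can shift the ``top row of a length-class'' by more than $O(1)$ and so needs care. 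Settling this dependence estimate is what remains; the rest is the bookkeeping already assembled in Section~\ref{s:X-rays} together with the elementary variance argument that handles the case $\lambda = \square$ of Theorem~\ref{thm:limit-shape}.
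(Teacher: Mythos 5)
First, note where this statement sits in the paper: it is stated as a \emph{conjecture}, and the paper proves it only in the special case $\lambda = \square$ (Theorem~\ref{thm:limit-shape}), where the measure is the uniform measure on $S_N$ and the second moment $\bbE(X_k^2)$ can be computed \emph{exactly} (Propositions~\ref{prop:E-X-squared} and~\ref{prop:variance-bound}) because the pair marginal $\bbE(\pi_{ij}\pi_{i'j'})$ has the closed form $\frac{1}{N(N-1)}$ off the forbidden row/column pairs. Your steps (ii) and (iii) are sound, and your step (iii) is in fact a cleaner discretization than the one used in Subsection~\ref{ss:limit-random-permutation}: exploiting that both $\widetilde{\xi}_{\pi}$ and $\fm_{\lambda}$ are non-decreasing and that $\fm_{\lambda}$ is (Lipschitz) continuous avoids the detour through the maximum X-ray component (Corollary~\ref{cor:max-rare} and Proposition~\ref{prop:reduction}), and would simplify even the $\lambda=\square$ argument.

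The genuine gap is exactly the one you flag yourself: step (i) is not proved, and it is the entire content of the conjecture. Neither of your two proposed inputs is established. Negative association of the entries of a uniform rook placement on a general dilated Ferrers board does not follow from the classical statement for uniform permutations, since conditioning on the events $\{\sigma(i)\le \ell_i\}$ need not preserve it, and no coupling is constructed. The alternative perturbation estimate is also not correct as literally stated: deleting row $i$ and column $v$ changes the marginal $\bbP(\sigma(i')=c)$ by $\Theta_{\lambda}(1/N)$, not $O_{\lambda}(1/N^2)$, when $c=v$ (it drops from $\Theta(1/N)$ to $0$); this particular column is harmless after summation, but it shows the bound must be argued box-by-box from the telescoping product in Proposition~\ref{prop:margin-prob}(b), and there the merging of length-classes (and the boxes near the antidiagonal, where $\lambda_{m+1-i}-i+1$ is $O(1)$ rather than $\Theta(N)$) makes the claimed uniform $O_{\lambda}(1/N^2)$ control a real lemma requiring proof, not an observation. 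Until one of these dependence estimates is supplied, what you have is a reduction of the conjecture to the statement $\Var(\widetilde{\xi}_{\pi}(t)) = O_{\lambda}(1/N)$, together with a correct proof of that variance bound in the single case $\lambda=\square$; that is a reasonable plan of attack, but not a proof of the conjecture.
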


In the next subsection, we give a proof of Conjecture~\ref{conj} in the special case of uniformly random permutations.

\subsection{Limit shape for normalized cumulative X-rays of random permutations}\label{ss:limit-random-permutation} Let $N$ be a positive integer, and let $\pi \sim \Unif(S_N)$ be a uniformly random permutation. For each $k \in [N+1]$, we let
\[
X_k := \xi_{\pi}(k).
\]
Let's also define, for each $k = 2, 3, \ldots, 2N$, the {\em $k^{\text{th}}$ X-ray component}
\[
x_k := \sum_{\substack{i,j \in [N] \\ i+j = k}} \pi_{ij}.
\]
Indeed, there is a simple relation between these notations: $X_k = x_2 + x_3 + \cdots + x_k$. 

The goal of this subsection is to prove the following result.

\begin{theorem}\label{thm:limit-shape}
Let $\varepsilon > 0$ be any fixed positive real number. Then,
\[
\bbP\!\left( \forall k \in [N+1], \left| X_k - \frac{k(k-1)}{2N} \right| < \varepsilon N \right) = 1 - O_{\varepsilon}\! \left( \frac{\log N}{N \, \log \log N} \right),
\]
as $N \to \infty$.
\end{theorem}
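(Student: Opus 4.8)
The plan is to control $X_k = \xi_\pi(k)$ uniformly in $k \in [N+1]$ by first understanding the expectation and concentration of each individual $X_k$, and then performing a union bound. The expectation is routine: for $\pi \sim \Unif(S_N)$, each entry $\pi_{ij}$ has $\bbE(\pi_{ij}) = 1/N$, so $\bbE(X_k) = \frac{1}{N}\#\{(i,j) \in [N]^2 : i+j \le k\}$, which by Lemma~\ref{lemma:counting}(b) equals $\frac{1}{N}\binom{k-1}{2} = \frac{k(k-1)}{2N} + O(1)$ for $k \in [N+1]$. Hence $\left| X_k - \frac{k(k-1)}{2N}\right| < \varepsilon N$ will follow as soon as $\left| X_k - \bbE(X_k)\right|$ is smaller than, say, $\frac{\varepsilon}{2} N$ and $N$ is large enough. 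So the real content is a concentration inequality for each $X_k$ together with a union bound over the $N+1$ values of $k$ that still leaves a small failure probability of the stated order.

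The key steps, in order, are: (1) compute $\bbE(X_k)$ exactly via Lemma~\ref{lemma:counting} and reduce the theorem to a uniform deviation bound; (2) compute or bound $\Var(X_k)$ — here one uses that $X_k = \sum_{i+j \le k}\pi_{ij}$ is a sum of indicators of a permutation matrix restricted to a staircase region, so the covariances $\bbE(\pi_{ij}\pi_{i'j'}) - \bbE(\pi_{ij})\bbE(\pi_{i'j'})$ are $-\frac{1}{N^2(N-1)}$ when the boxes share a row or column and $\frac{1}{N^2(N-1)} - \frac{1}{N^2} \cdot \frac{1}{N}$-type corrections otherwise, giving $\Var(X_k) = O(N)$ by a direct count (this is the kind of variance computation standard for statistics of random permutation matrices); (3) since $X_k$ is a Lipschitz function of the random permutation and the uniform measure on $S_N$ has good concentration, apply a concentration-of-measure inequality — for instance the bounded-differences/Azuma inequality adapted to random permutations (a transposition of two values changes $X_k$ by at most $2$), which yields $\bbP(|X_k - \bbE X_k| \ge s) \le 2\exp(-s^2/(cN))$; (4) take $s = \frac{\varepsilon}{2}N$, so each tail is at most $2\exp(-c'\varepsilon^2 N)$, which is exponentially small and the union bound over $k \in [N+1]$ still gives $O(N e^{-c'\varepsilon^2 N}) = O\!\left(\frac{\log N}{N\log\log N}\right)$ comfortably. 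Actually, the stated error term $O_\varepsilon\!\left(\frac{\log N}{N\log\log N}\right)$ is far weaker than exponential, which suggests the author may instead use only a second-moment (Chebyshev) bound on a cleverly chosen subset of the $k$'s, or control $\sup_k$ via a chaining/maximal-inequality argument over a net of size about $\log N / \log\log N$; I would first try the clean exponential route and only fall back to Chebyshev with a union bound over $k$ if the Lipschitz constant causes trouble, noting that even Chebyshev gives $\bbP(|X_k - \bbE X_k|\ge \varepsilon N/2) = O(\Var(X_k)/(\varepsilon N)^2) = O(1/(\varepsilon^2 N))$ per $k$, and a union bound over $N+1$ values would only give $O(1/\varepsilon^2)$ — not good enough, so one genuinely needs either the exponential concentration or a smarter argument.

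The main obstacle I anticipate is precisely matching the \emph{uniform-in-$k$} statement with a failure probability as small as $O\!\left(\frac{\log N}{N \log\log N}\right)$: a naive Chebyshev union bound over all $k$ is too lossy, so one must either (a) invoke a genuine exponential concentration inequality for functions of random permutations (bounded differences / Talagrand-type), after checking the correct Lipschitz constant — straightforward but it is the technical heart — or (b) exploit the fact that $X_k$ is monotone nondecreasing in $k$, so it suffices to control $X_k$ on a sparse grid of $O(N/(\varepsilon N)) = O(1/\varepsilon)$ — or more carefully a grid chosen so the increments are $\le \varepsilon N/2$ — and then the intermediate values are automatically sandwiched. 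Route (b) reduces the union bound to $O(1/\varepsilon)$ events, and then even Chebyshev per event gives failure probability $O(1/(\varepsilon^3 N))$, which is already stronger than the claimed bound; the explicit $\frac{\log N}{\log\log N}$ shape in the theorem hints that the author wants room to spare, possibly because the per-event bound actually being used is weaker (e.g. a crude moment bound) or because the grid size they use is slightly larger. I would therefore structure the proof as: monotonicity of $X_k$ $\Rightarrow$ reduce to a grid $\Rightarrow$ variance bound $\Rightarrow$ Chebyshev on the grid $\Rightarrow$ union bound, and verify at the end that the resulting probability is indeed $O_\varepsilon\!\left(\frac{\log N}{N \log\log N}\right)$ (in fact better), rather than fight for the sharpest possible constant.
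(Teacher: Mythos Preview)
Your proposal is correct, and your preferred route (b) --- exploit the monotonicity of $k\mapsto X_k$ to reduce to a grid of $O(1/\varepsilon)$ values of $k$, then apply Chebyshev at each grid point --- works and in fact yields the stronger bound $O_\varepsilon(1/N)$ that you anticipate. The paper follows the same overall architecture (the variance bound $\Var(X_k)<k^2/(2N)$ is Proposition~\ref{prop:variance-bound}, and the endgame is Chebyshev plus a union bound over a grid), but it handles the grid reduction differently: rather than using monotonicity, it first proves (Corollary~\ref{cor:max-rare}) that $\max_{2\le \ell\le N+1} x_\ell < \tfrac{3\log N}{\log\log N}$ with probability at least $1-1/N$, and then bounds $|X_k-X_{k'}|\le g\cdot\max_\ell x_\ell$ between neighboring grid points (Proposition~\ref{prop:reduction}). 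This is precisely the source of the peculiar $\frac{\log N}{N\log\log N}$ in the statement --- it is inherited from the max-X-ray bound and the resulting grid spacing $g\asymp \varepsilon N\log\log N/\log N$. Your monotonicity argument bypasses that step entirely and gives a cleaner, sharper conclusion; the paper's route, on the other hand, produces along the way an independently interesting tail bound on the largest antidiagonal count of a uniformly random permutation. Your route (a) via bounded differences for random permutations would also go through and give exponentially small failure probability, but route (b) is already enough and closer in spirit to what the paper does.
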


As a corollary of Theorem~\ref{thm:limit-shape}, we obtain a proof of Conjecture~\ref{conj} in the very special case when $\lambda = \square$ is a partition with one box. In this case, the function $\fm_{\lambda}$ coincides with $\fc|_{[0,2]}$. From the definition of $\fc$, we see that the graph of this function is a concatenation of two parabolas.

Here is our rough strategy for proving the theorem. First, we show that with high probability the largest X-ray component $\max_k x_k$ is small. Second, we give an upper bound on the size of the variance $\Var(X_k)$. Third, we argue that since the X-ray components are small with high probability, it suffices to establish the bound
\[
\left| X_k - \frac{k(k-1)}{2N} \right| < \varepsilon N
\]
with high probability for all $k$ simultaneously in a certain subset of $[N+1]$, instead of the whole $[N+1]$. Fourth, we use Chebyshev's tail bound to show that we have the bound
\[
\left| X_k - \frac{k(k-1)}{2N} \right| < \varepsilon N
\]
for all $k$ in the mentioned subset of $[N+1]$ simultaneously with high probability. This finishes the proof.

\smallskip

Now we begin the first step of our strategy.

\begin{proposition}
Let $t \le N$ be a positive integer. We have
\[
\sum_{k=2}^{N+1} \bbP\!\left( x_k \ge t \right) \le \frac{N+1}{(t+1)!}.
\]
\end{proposition}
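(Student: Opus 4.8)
The plan is to bound the probability that a single X-ray component $x_k$ is at least $t$, and then sum over $k$. For a fixed $k \in \{2, 3, \ldots, 2N\}$, the event $\{x_k \ge t\}$ means that there are at least $t$ pairs $(i,j)$ with $i+j = k$ and $\pi_{ij} = 1$; equivalently, the permutation $\pi$ places at least $t$ of its ones on the anti-diagonal $\{(i,j) : i+j=k\}$. Let me denote by $\ell = \ell(k)$ the number of lattice points $(i,j) \in [N]^2$ with $i+j=k$; note $\ell(k) \le N$ for every $k$, and in fact $\ell(k) = \min\{k-1, 2N+1-k\}$. Then
\[
\bbP(x_k \ge t) = \sum_{s \ge t} \bbP(x_k = s) \le \sum_{s \ge t} \binom{\ell}{s} \cdot \frac{(N-s)!}{N!},
\]
since to have $\pi_{ij}=1$ at a prescribed set of $s$ cells on the anti-diagonal (which automatically lie in distinct rows and distinct columns), we choose the $s$ cells in $\binom{\ell}{s}$ ways and then the number of permutations of $S_N$ agreeing with those $s$ ones is $(N-s)!$. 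Using $\binom{\ell}{s} \le \binom{N}{s} = \frac{N!}{s!(N-s)!}$ gives $\bbP(x_k \ge s) \le \sum_{s \ge t} \frac{1}{s!} \le \frac{2}{t!}$ or, more carefully, the first term $\frac{1}{t!}$ dominates a geometric-type tail, so one gets a clean bound of the form $\bbP(x_k \ge t) \le \frac{C}{t!}$ for a small constant. To land exactly on $\frac{1}{(t+1)!}$ per term when summed, I would instead be slightly more economical: bound $\bbP(x_k \ge t)$ by observing that $\binom{\ell}{s} \frac{(N-s)!}{N!} = \frac{\binom{\ell}{s}}{N(N-1)\cdots(N-s+1)} \le \frac{\ell^s/s!}{N(N-1)\cdots(N-s+1)} \le \frac{1}{s!}$ since $\ell \le N - s + 1$ whenever $s \le \ell \le N$ forces... actually $\ell \le N$ always, and for $s \le \ell$ we have $N(N-1)\cdots(N-s+1) \ge \ell(\ell-1)\cdots(\ell-s+1) \cdot$ (nothing useful directly), so the cleanest route is $\binom{\ell}{s} \le \binom{N}{s}$ hence each summand is $\le 1/s!$, giving $\bbP(x_k \ge t) \le \sum_{s=t}^{N} \frac{1}{s!} \le \frac{1}{t!}\sum_{r \ge 0} \frac{1}{(t+1)^r} \cdot$(rough) $\le \frac{2}{t!}$; and then I would sharpen by noting $\ell(k) \le N$ can be improved for most $k$, or simply redo the book-keeping so the sum over the $2N-1$ anti-diagonals telescopes.

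In fact, the sharp form comes from summing over $k$ directly rather than bounding each $\bbP(x_k \ge t)$ in isolation. The key observation is $\sum_{k} \binom{\ell(k)}{s}$ over all anti-diagonals equals the number of ways to choose $s$ cells lying on a common anti-diagonal, which is at most the number of ways to choose an anti-diagonal and then $s$ of its cells; but more to the point, $\sum_{k=2}^{2N} \binom{\ell(k)}{s} \le \binom{N}{s} \cdot (\text{number of } k) $ is too lossy. Instead I would use: for each $s$, the total count $\sum_{k} \binom{\ell(k)}{s}$ of $s$-subsets of cells that are collinear along an anti-diagonal is at most $\binom{N}{s}$ times something, but actually the genuinely clean bound is that summing $\binom{\ell}{s}(N-s)!/N!$ over $k$ from $2$ to $N+1$ (the range in the statement) and using $\ell(k) = k - 1$ on this range gives $\sum_{k=2}^{N+1}\binom{k-1}{s}(N-s)!/N! = \binom{N}{s+1}(N-s)!/N! = \frac{1}{(s+1)!}$ by the hockey-stick identity $\sum_{k=2}^{N+1}\binom{k-1}{s} = \binom{N}{s+1}$, and $\binom{N}{s+1}\frac{(N-s)!}{N!} = \frac{1}{(s+1)!}$. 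Summing this over $s \ge t$ gives $\sum_{s\ge t}\frac{1}{(s+1)!} \le \frac{1}{(t+1)!}\cdot\frac{1}{1 - 1/(t+2)} \le \frac{2}{(t+1)!}$, which is essentially the claimed bound (and with a touch more care on the geometric tail, exactly $\frac{1}{(t+1)!}$ times a constant; I would check whether restricting attention to $s = t$ plus a clean tail estimate yields the stated constant $1$, and if a factor is lost I would note it is absorbed, or adjust to the exact hockey-stick accounting). Then multiplying by... wait, we want $\sum_{k=2}^{N+1}\bbP(x_k \ge t) \le \sum_{k=2}^{N+1}\sum_{s \ge t}\binom{\ell(k)}{s}\frac{(N-s)!}{N!} = \sum_{s\ge t}\sum_{k=2}^{N+1}\binom{k-1}{s}\frac{(N-s)!}{N!}$, and here I must be careful that $\ell(k) = k-1$ only holds for $k \le N+1$, which is exactly the range in the proposition — this is presumably why the sum is taken over $k \in \{2,\ldots,N+1\}$ rather than all the way to $2N$. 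So swapping the order of summation, applying hockey-stick, and then bounding the resulting geometric tail $\sum_{s \ge t}\frac{1}{(s+1)!}$ is the whole argument.

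The main steps, in order, are: (1) for fixed $k$ in the stated range, write $\bbP(x_k \ge t) \le \sum_{s \ge t}\binom{k-1}{s}\frac{(N-s)!}{N!}$ via a union/counting bound over which $s$ cells of the $(k-1)$-element anti-diagonal carry the ones, using that any such cells are automatically non-attacking so the count of completing permutations is $(N-s)!$; (2) sum over $k$ from $2$ to $N+1$, interchange the two sums, and apply the hockey-stick identity $\sum_{k=2}^{N+1}\binom{k-1}{s} = \binom{N}{s+1}$ together with $\binom{N}{s+1}\frac{(N-s)!}{N!} = \frac{1}{(s+1)!}$; (3) bound the tail $\sum_{s=t}^{N}\frac{1}{(s+1)!} \le \frac{N+1}{(t+1)!}$ — note the factor $N+1$ in the statement is quite generous, so even a crude bound $\sum_{s \ge t}\frac{1}{(s+1)!} \le \frac{1}{(t+1)!} \cdot e \le \frac{N+1}{(t+1)!}$ suffices for $N \ge 2$. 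The one point requiring genuine care — and the only place an error could creep in — is step (1): verifying that the naive first-moment/union bound on the tail event is valid, i.e. that $\bbP(x_k = s) \le \binom{k-1}{s}(N-s)!/N!$ with the correct interpretation (it is an inequality, not equality, since the $(N-s)!$ overcounts configurations where additional anti-diagonal cells also happen to be ones, but overcounting only helps an upper bound). Everything after that is the hockey-stick identity and a geometric-series estimate, both routine. I do not anticipate a serious obstacle; the subtlety is purely in getting the combinatorial counting bound set up correctly and in recognizing that the truncation $k \le N+1$ is exactly what makes $\ell(k) = k-1$ and hence makes the hockey-stick identity applicable.
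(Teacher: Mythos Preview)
Your approach is essentially the paper's, but you have introduced one unnecessary complication that, combined with two arithmetic slips, prevents the final bound from landing.

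First, the slips. The hockey-stick identity gives $\sum_{k=2}^{N+1}\binom{k-1}{s}=\binom{N+1}{s+1}$, not $\binom{N}{s+1}$; and then
\[
\binom{N+1}{s+1}\cdot\frac{(N-s)!}{N!}=\frac{(N+1)!}{(s+1)!\,(N-s)!}\cdot\frac{(N-s)!}{N!}=\frac{N+1}{(s+1)!},
\]
not $\frac{1}{(s+1)!}$. With these corrections, your step (3) becomes $(N+1)\sum_{s\ge t}\frac{1}{(s+1)!}$, which already exceeds $\frac{N+1}{(t+1)!}$ at its first term --- so the detour through $\sum_{s\ge t}$ costs you the exact constant.

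The fix is the one you yourself hint at: drop the sum over $s$ entirely. The event $\{x_k\ge t\}$ is the union, over $t$-subsets $\{i_1,\dots,i_t\}\subseteq[k-1]$, of the events $\{\pi_{i_1,k-i_1}=\cdots=\pi_{i_t,k-i_t}=1\}$, so a single union bound at level $t$ gives
\[
\bbP(x_k\ge t)\le\binom{k-1}{t}\cdot\frac{(N-t)!}{N!}.
\]
This is precisely what the paper does. Summing over $k$ and applying the correct hockey-stick identity yields $\binom{N+1}{t+1}\cdot\frac{(N-t)!}{N!}=\frac{N+1}{(t+1)!}$ on the nose, with no tail estimate needed. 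The paper writes this step as a telescoping sum on $(k-1)(k-2)\cdots(k-t)$, but it is the same identity.
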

\begin{proof}
The event $x_k \ge t$ is equivalent to the event that there exists a $t$-subset $\{i_1, i_2, \ldots, i_t\}$ of indices in $[k-1]$ such that
\[
\pi_{i_1, k-i_1} = \pi_{i_2, k-i_2} = \cdots = \pi_{i_t, k-i_t} = 1.
\]
Since
\begin{align}
    \bbP\!\left(\pi_{i_1, k-i_1} = \pi_{i_2, k-i_2} = \cdots = \pi_{i_t, k-i_t} = 1\right) &= \bbE\!\left(\pi_{i_1, k - i_1} \pi_{i_2, k - i_2} \cdots \pi_{i_t, k - i_t}\right) \\
    &= \frac{1}{N(N-1) \cdots (N-t+1)},
\end{align}
we have
\begin{equation}
    \bbP\!\left( x_k \ge t \right) \le \frac{1}{t!} \cdot \frac{(k-1)(k-2) \cdots (k-t)}{N(N-1) \cdots (N-t+1)}.
\end{equation}
Therefore, by telescoping, we obtain
\begin{align}
    \sum_{k=2}^{N+1} \bbP\!\left( x_k \ge t \right) &\le \sum_{k=2}^{N+1} \frac{(k-1)(k-2) \cdots (k-t)}{N(N-1) \cdots (N-t+1)} \\
    &= \frac{1}{t!} \sum_{k=2}^{N+1} \frac{k(k-1)\cdots(k-t) - (k-1)(k-2) \cdots (k-t-1)}{(t+1) \cdot N(N-1) \cdots (N-t+1)} \\
    &= \frac{N+1}{(t+1)!},
\end{align}
as desired.
\end{proof}

By using the bound
\begin{equation}
    \bbP\!\left( \max\{x_2, x_3, \ldots, x_{N+1}\} \ge t \right) \le \sum_{k=2}^{N+1} \bbP\!\left( x_k \ge t \right),
\end{equation}
together with Stirling's formula, we obtain the following corollary.

\begin{corollary}\label{cor:max-rare}
For all sufficiently large positive integers $N$, we have
\[
\bbP\!\left( \max\{x_2, x_3, \ldots, x_{N+1}\} \ge \frac{3 \log N}{\log \log N} \right) \le \frac{1}{N}.
\]
\end{corollary}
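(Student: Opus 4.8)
The plan is to combine the union bound displayed just above the corollary with the preceding proposition and then estimate a single factorial via Stirling's formula. Set $t = t(N) := \left\lceil 3\log N / \log\log N \right\rceil$. Since each $x_k$ is integer-valued, the event $\max_k x_k \ge 3\log N/\log\log N$ coincides with the event $\max_k x_k \ge t$, so by the union bound and the proposition,
\[
\bbP\!\left( \max\{x_2,\ldots,x_{N+1}\} \ge \tfrac{3\log N}{\log\log N} \right) \le \sum_{k=2}^{N+1} \bbP\!\left( x_k \ge t \right) \le \frac{N+1}{(t+1)!}.
\]
Thus it suffices to show that $(t+1)! \ge N(N+1)$ for all sufficiently large $N$, since then the right-hand side is at most $\frac{N+1}{N(N+1)} = \frac{1}{N}$, which is exactly the claimed bound.

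To verify $(t+1)! \ge N(N+1)$, I would take logarithms and apply Stirling's formula in the form $\log(m!) = m\log m - m + O(\log m)$ with $m = t+1$. Since $t \sim 3\log N/\log\log N$, we have $\log(t+1) = \log\log N - \log\log\log N + O(1) = (1+o(1))\log\log N$, whence
\[
\log\bigl((t+1)!\bigr) = (t+1)\log(t+1) + O(t) = (1+o(1)) \cdot \frac{3\log N}{\log\log N} \cdot \log\log N = (3+o(1))\log N.
\]
On the other hand $\log\bigl(N(N+1)\bigr) = (2+o(1))\log N$, and $(3+o(1))\log N - (2+o(1))\log N = (1+o(1))\log N \to \infty$, so $\log\bigl((t+1)!\bigr) > \log\bigl(N(N+1)\bigr)$ for all sufficiently large $N$. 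This gives the desired inequality and completes the proof.

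There is no genuine obstacle here; the only point needing a little care is bookkeeping the error terms when $t$ itself grows like $\log N/\log\log N$, so that one confirms $t\log t$ really recovers $3\log N$ to leading order, and that the constant $3$ (any constant strictly greater than $2$ would do for some power of $N$, but $3$ is what is claimed) makes the bound land at $1/N$. One could also bypass Stirling entirely: for large $N$ one has $t+1 \ge 4$, so $(t+1)! \ge \left(\tfrac{t+1}{2}\right)^{(t+1)/2}$, and checking $\left(\tfrac{t+1}{2}\right)^{(t+1)/2} \ge N^2$ reduces to the same leading-order estimate but is arguably cleaner to write out.
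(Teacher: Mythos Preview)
Your proposal is correct and follows exactly the approach the paper indicates: combine the union bound with the preceding proposition to get $\frac{N+1}{(t+1)!}$, then use Stirling's formula to show this is at most $1/N$ for large $N$. Your treatment is in fact more detailed than the paper's, which merely cites Stirling without writing out the $(3+o(1))\log N$ versus $(2+o(1))\log N$ comparison; your bookkeeping (including the integer-valued observation justifying the passage to $t = \lceil 3\log N/\log\log N\rceil$) is sound.
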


Next is the second step of the strategy. For each $2 \le k \le N+1$, it is easy to see that $\bbE\!\left( X_k \right) = \frac{k(k-1)}{2N}$.

\begin{proposition}\label{prop:E-X-squared}
Let $N \ge 2$ be a positive integer. For $2 \le k \le N+1$, we have
\[
\bbE\!\left( X_k^2 \right) = \frac{k(k-1)}{2N} + \frac{k(k-1)(k-2)(3k-5)}{12N(N-1)}.
\]
\end{proposition}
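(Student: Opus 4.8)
The plan is to expand the square, use the Bernoulli property of the entries of a permutation matrix, and reduce everything to a lattice-point count in a triangle. Write $T_k := \{(i,j) \in [N]^2 : i+j \le k\}$, so that $X_k = \sum_{(i,j) \in T_k} \pi_{ij}$. Since each $\pi_{ij} \in \{0,1\}$, we have $\pi_{ij}^2 = \pi_{ij}$, and therefore
\[
X_k^2 = \sum_{(i,j) \in T_k} \pi_{ij} + \sum_{\substack{(i,j),(i',j') \in T_k \\ (i,j) \ne (i',j')}} \pi_{ij}\,\pi_{i'j'}.
\]
The expectation of the first sum is $\bbE(X_k) = \frac{k(k-1)}{2N}$, recorded just before the statement. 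For the double sum I would use the elementary fact that for a uniformly random permutation matrix $\bbE(\pi_{ij}\pi_{i'j'}) = 0$ whenever the two cells share a row or a column (a permutation matrix has at most one $1$ in each line), while $\bbE(\pi_{ij}\pi_{i'j'}) = \frac{(N-2)!}{N!} = \frac{1}{N(N-1)}$ when both the rows and the columns are distinct. This gives $\bbE(X_k^2) = \frac{k(k-1)}{2N} + \frac{1}{N(N-1)} \cdot P_k$, where $P_k$ denotes the number of ordered pairs of distinct cells of $T_k$ lying in distinct rows and distinct columns.

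The second and main computational step is to evaluate $P_k$. Because $k \le N+1$, any $(i,j)$ with $i,j \ge 1$ and $i+j \le k$ automatically satisfies $i,j \le k-1 \le N$, so $T_k$ is the full, untruncated triangle $\{(i,j) : i,j \ge 1,\ i+j \le k\}$, and in particular $|T_k| = \binom{k}{2}$. I would then compute $P_k$ by inclusion--exclusion: from the $|T_k|\bigl(|T_k|-1\bigr)$ ordered pairs of distinct cells, subtract the pairs sharing a row and the pairs sharing a column (no correction is needed for pairs sharing both, since distinct cells cannot share both a row and a column). Row $i$ of $T_k$ contains $k-i$ cells, so the number of ordered pairs of distinct cells in a common row is $\sum_{i=1}^{k-1}(k-i)(k-i-1) = 2\binom{k}{3} = \frac{k(k-1)(k-2)}{3}$, and by the reflection $(i,j)\leftrightarrow(j,i)$ the column count is the same. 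Hence
\[
P_k = \binom{k}{2}\!\left(\binom{k}{2}-1\right) - \frac{2k(k-1)(k-2)}{3} = \frac{k(k-1)(k-2)(k+1)}{4} - \frac{2k(k-1)(k-2)}{3},
\]
which, after factoring out $k(k-1)(k-2)$ and combining the fractions, equals $\frac{k(k-1)(k-2)(3k-5)}{12}$. Substituting into $\bbE(X_k^2) = \frac{k(k-1)}{2N} + \frac{P_k}{N(N-1)}$ yields the claimed formula.

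There is no genuine obstacle; the proof is a routine second-moment computation. The only points that deserve care are the use of the hypothesis $k \le N+1$ to guarantee that $T_k$ is not clipped by the $[N]^2$ board (so that the clean count $|T_k| = \binom{k}{2}$ and the row-sizes $k-i$ are valid), and the bookkeeping in the inclusion--exclusion. The edge case $k=2$, where $T_2$ is a single cell, is automatically consistent since the second summand carries the factor $k-2$.
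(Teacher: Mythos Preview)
Your proof is correct and follows essentially the same approach as the paper: expand the square, use the three-case analysis for $\bbE(\pi_{ij}\pi_{i'j'})$ (equal cells, same row or column, generic), and reduce to a lattice-point count in the triangle $T_k$. The only cosmetic difference is in how that count is organized: the paper fixes a cell $(i,j)$ on the anti-diagonal $i+j=\ell$ and counts the $2k-\ell-2$ cells of $T_k$ sharing its row or column, then sums over $\ell$, whereas you perform a global inclusion--exclusion on ordered pairs; both lead to the same $P_k = \tfrac{k(k-1)(k-2)(3k-5)}{12}$.
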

\begin{proof}
Observe that
\begin{equation}
    \bbE\!\left( X_k^2 \right) = \sum_{\ell = 2}^k \sum_{\substack{i, j \in [N] \\ i+j = \ell}} \sum_{\substack{i', j' \in [N] \\ i' + j' \le k}} \bbE\!\left( \pi_{ij} \pi_{i'j'} \right).
\end{equation}

For the innermost summation on the right-hand side above, there are three cases. First, if $(i',j') = (i,j)$, then $\bbE\!\left( \pi_{ij} \pi_{i'j'} \right) = 1/N$. Second, if $(i',j') \neq (i,j)$ but $(i',j')$ is on either the same row or the same column as $(i,j)$, then $\bbE\!\left( \pi_{ij} \pi_{i'j'} \right) = 0$. There are $2k-\ell-2$ such ordered pairs. For the rest, the expectation $\bbE\!\left( \pi_{ij} \pi_{i'j'} \right)$ is $\frac{1}{N(N-1)}$. Therefore,
\begin{equation}
    \bbE\!\left( X_k^2 \right) = \sum_{\ell = 2}^k \sum_{\substack{i, j \in [N] \\ i+j = \ell}} \left\{ \frac{1}{N} + \left( \frac{k(k-1)}{2} - 2k + \ell + 1 \right) \cdot \frac{1}{N(N-1)} \right\}.
\end{equation}
Simplify to finish.
\end{proof}

\begin{proposition}\label{prop:variance-bound}
Let $N \ge 2$ be a positive integer. For $2 \le k \le N+1$, we have
\[
\Var(X_k) < \frac{k^2}{2N}.
\]
\end{proposition}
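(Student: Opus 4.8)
The plan is to expand the variance directly and reduce the claim to an elementary polynomial inequality. Combining Proposition~\ref{prop:E-X-squared} with the identity $\bbE(X_k) = \frac{k(k-1)}{2N}$ noted just above its statement, we obtain
\[
\Var(X_k) = \bbE\!\left(X_k^2\right) - \bbE\!\left(X_k\right)^2 = \frac{k(k-1)}{2N} + \frac{k(k-1)(k-2)(3k-5)}{12N(N-1)} - \frac{k^2(k-1)^2}{4N^2}.
\]
Since $2 \le k \le N+1$ forces $N \ge 2$, every denominator appearing here is positive, so nothing is ill-defined, and in particular $N - 1 \ge 1$.

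Next I would strip away the fractions. Subtracting $\frac{k^2}{2N}$ and using $\frac{k(k-1)}{2N} - \frac{k^2}{2N} = -\frac{k}{2N}$, then multiplying through by the positive number $N/k$, the target inequality $\Var(X_k) < \frac{k^2}{2N}$ becomes
\[
\frac{(k-1)(k-2)(3k-5)}{12(N-1)} - \frac{k(k-1)^2}{4N} < \frac{1}{2}.
\]
I claim in fact that the left-hand side is $\le 0$, which is more than enough. Writing $\frac{k(k-1)^2}{4N} = \frac{3k(k-1)^2}{12N}$ and clearing the positive denominators (and cancelling the common factor $k-1$), this reduces to
\[
N(k-2)(3k-5) \le 3k(k-1)(N-1).
\]

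The only real content is this last inequality, and it is a one-line estimate. Its difference simplifies to
\[
3k(k-1)(N-1) - N(k-2)(3k-5) = 2N(4k-5) - 3k(k-1);
\]
since $k \ge 2$ gives $4k-5 > 0$ and $k \le N+1$ gives $N \ge k-1$, we conclude $2N(4k-5) - 3k(k-1) \ge 2(k-1)(4k-5) - 3k(k-1) = (k-1)(5k-10) = 5(k-1)(k-2) \ge 0$. This closes the proof. The case $k = 2$ requires no separate handling, since the factor $k - 2 = 0$ makes the reduced inequality read $0 \le 6(N-1)$. I do not foresee any genuine obstacle: the argument is a short substitution followed by a polynomial estimate exploiting $N \ge k-1$, and the only point needing care is tracking signs when clearing denominators.
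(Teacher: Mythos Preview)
Your proof is correct and, despite the different-looking algebra, essentially coincides with the paper's: your key claim that the left-hand side is $\le 0$ is exactly the paper's statement $\Var(X_k) \le \frac{k(k-1)}{2N}$, and both arguments reduce to the same elementary inequality $2N(4k-5) \ge 3k(k-1)$ (the paper reaches it after first rewriting the variance as $\frac{k(k-1)}{2N} - \frac{k(k-1)(4k-5)}{6N(N-1)} + \frac{k^2(k-1)^2}{4N^2(N-1)}$, whereas you work directly from the raw expression).
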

\begin{proof}
From Proposition~\ref{prop:E-X-squared} and some algebraic manipulation, we obtain
\begin{equation}
    \Var(X_k) = \bbE\!\left(X_k^2\right) - \left(\bbE X_k\right)^2 = \frac{k(k-1)}{2N} - \frac{k(k-1)(4k-5)}{6N(N-1)} + \frac{k^2(k-1)^2}{4N^2(N-1)}.
\end{equation}
It is not hard to see that
\[
\frac{k(k-1)(4k-5)}{6N(N-1)} \ge \frac{k^2(k-1)^2}{4N^2(N-1)},
\]
whence $\Var(X_k) \le \frac{k(k-1)}{2N} < \frac{k^2}{2N}$.
\end{proof}

We now turn to the third step of the described strategy. For each real number $\varepsilon > 0$, we let $E_{\varepsilon}$ denote the event
\begin{equation}
    \forall k \in \{2, 3, \ldots, N+1\}, \left| X_k - \frac{k(k-1)}{2N} \right| < \varepsilon N.
\end{equation}
For each positive integer $g \le N+1$, we let $E_{g, \varepsilon}$ denote the event
\begin{equation}
    \forall k \in g \mathbb{Z} \cap [N+1], \left| X_k - \frac{k(k-1)}{2N} \right| < \varepsilon N.
\end{equation}
Here, $g \mathbb{Z} \cap [N+1]$ refers to the set $\left\{g, 2g, \ldots, \left\lfloor \frac{N+1}{g} \right\rfloor g\right\}$.

\begin{proposition}\label{prop:reduction}
Let $\varepsilon > 0$ be a positive real number. For all sufficiently large positive integers $N$, if
\[
g < \frac{\varepsilon}{8} \cdot \frac{N \, \log \log N}{\log N},
\]
then
\[
\bbP\!\left(E_{\varepsilon}\right) \ge \bbP\!\left( E_{g, \varepsilon/2} \right) - \frac{1}{N}.
\]
\end{proposition}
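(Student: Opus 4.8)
The plan is to \emph{interpolate}: we already control $X_k - \frac{k(k-1)}{2N}$ at the grid points $g\mathbb{Z}\cap[N+1]$ — that is precisely the event $E_{g,\varepsilon/2}$ — and we want to push this control to every $k\in\{2,\dots,N+1\}$. Two ingredients make this possible. First, $X_k = x_2+x_3+\cdots+x_k$ is a partial sum of non-negative terms, so $X_2\le X_3\le\cdots\le X_{N+1}$ and, for $2\le a\le b\le N+1$, we have $0\le X_b-X_a = x_{a+1}+\cdots+x_b \le (b-a)\cdot\max_{2\le j\le N+1}x_j$. Second, the comparison function $k\mapsto \frac{k(k-1)}{2N}$ is Lipschitz with constant $O(1)$ on $[0,N+1]$, so over a window of width $g$ it moves by only $O(g)=o(N)$.

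The steps, in order. (1) Introduce the ``small-jump'' event $G := \bigl\{\max_{2\le j\le N+1}x_j < \tfrac{3\log N}{\log\log N}\bigr\}$; by Corollary~\ref{cor:max-rare}, $\bbP(G^c)\le 1/N$ for all sufficiently large $N$. (2) Reduce the proposition to the inclusion $G\cap E_{g,\varepsilon/2}\subseteq E_\varepsilon$: granting this, $\bbP(E_\varepsilon)\ge \bbP(G\cap E_{g,\varepsilon/2})\ge \bbP(E_{g,\varepsilon/2})-\bbP(G^c)\ge \bbP(E_{g,\varepsilon/2})-1/N$, as claimed. (3) Prove the inclusion: fix an outcome in $G\cap E_{g,\varepsilon/2}$ and an arbitrary $k\in\{2,\dots,N+1\}$, choose a grid point $m\in g\mathbb{Z}\cap[N+1]$ with $|k-m|<g$ (take $m=g\lfloor k/g\rfloor$ if $k\ge g$, and $m=g$ otherwise, which is legitimate once $N$ is large enough that $g\le N+1$), and apply the triangle inequality
\[
\Bigl|X_k-\tfrac{k(k-1)}{2N}\Bigr| \le |X_k-X_m| + \Bigl|X_m-\tfrac{m(m-1)}{2N}\Bigr| + \tfrac{1}{2N}\bigl|k(k-1)-m(m-1)\bigr|.
\]
The middle term is $<\tfrac{\varepsilon}{2}N$ because the outcome lies in $E_{g,\varepsilon/2}$ and $m$ is a grid point; the first term is $\le |k-m|\cdot\tfrac{3\log N}{\log\log N} < g\cdot\tfrac{3\log N}{\log\log N} < \tfrac{3\varepsilon}{8}N$ by the hypothesis $g<\tfrac{\varepsilon}{8}\cdot\tfrac{N\log\log N}{\log N}$; and the third term equals $\tfrac{1}{2N}|k-m|(k+m-1) < 2g$, which is $o(N)$ and hence $<\tfrac{\varepsilon}{8}N$ for all large $N$. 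Adding, $\bigl|X_k-\tfrac{k(k-1)}{2N}\bigr| < \bigl(\tfrac38+\tfrac12+\tfrac18\bigr)\varepsilon N = \varepsilon N$, so the outcome lies in $E_\varepsilon$, completing step (3).

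I do not expect a serious obstacle here; this is a routine ``sparsify the union bound'' argument, and its entire content is the observation that monotonicity of $X_k$ plus rarity of large X-ray components lets a bound on a coarse grid be upgraded to a bound everywhere. The only points that need care are (a) choosing the nearby grid point $m$ correctly when $k$ is close to the endpoints $2$ or $N+1$, where $m$ may be forced to lie on just one side of $k$; and (b) the constant bookkeeping in step (3): the hypothesis carries the explicit factor $\varepsilon/8$ precisely so that the three error contributions — from the random increments of $X$ between grid points, from the event $E_{g,\varepsilon/2}$ itself, and from the deterministic curvature of $\tfrac{k(k-1)}{2N}$ — add up to at most $\varepsilon N$.
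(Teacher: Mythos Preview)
Your proof is correct and follows essentially the same approach as the paper: both introduce the small-jump event via Corollary~\ref{cor:max-rare}, reduce to the inclusion $E_{g,\varepsilon/2}\setminus A\subseteq E_\varepsilon$ (equivalently $G\cap E_{g,\varepsilon/2}\subseteq E_\varepsilon$), and then for arbitrary $k$ pick a nearby grid point and bound the same three terms in the triangle inequality, with the same numerical tally $\tfrac{3}{8}+\tfrac{1}{2}+\tfrac{1}{8}=1$. The only cosmetic difference is that the paper gets $\le g$ for the third term while you get $<2g$; both are $o(N)$ and fit under $\tfrac{\varepsilon}{8}N$ for large $N$.
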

\begin{proof}
Let $A$ denote the event that $\max\{x_2, x_3, \ldots, x_{N+1}\} \ge \frac{3 \log N}{\log \log N}$. By Corollary~\ref{cor:max-rare}, it suffices to show that for all sufficiently large positive integers $N$, we have the inclusion $E_{g, \varepsilon} \subseteq E_{\varepsilon} \cup A$.

Consider any event in $E_{g, \varepsilon/2} \setminus A$. In this case, $\left| X_{k'} - \frac{k'(k'-1)}{2N} \right| < \frac{\varepsilon N}{2}$ for any $k'$ divisible by $g$, and $x_{\ell} < \frac{3 \log N}{\log \log N}$, for any $\ell$. We claim that for any $k \in \{2, 3, \ldots, N+1\}$, we have $\left| X_k - \frac{k(k-1)}{2N} \right| < \varepsilon N$.

Note that we can find an integer $k' \in \{2, 3, \ldots, N+1\}$ which is a multiple of $g$ such that $|k' - k| < g$. By the triangle inequality, we have
\begin{equation}
    \left| X_k - \frac{k(k-1)}{2N} \right| \le |X_k - X_{k'}| + \left| X_{k'} - \frac{k'(k'-1)}{2N} \right| + \left| \frac{k'(k'-1)}{2N} - \frac{k(k-1)}{2N}\right|.
\end{equation}
The first term on the right-hand side is
\begin{equation}
    |X_k - X_{k'}| \le g \cdot \frac{3 \log N}{\log \log N} < \frac{3}{8} \varepsilon N.
\end{equation}
The second term is less than $\frac{\varepsilon N}{2}$. The third term is
\begin{equation}
    \left| \frac{k'(k'-1)}{2N} - \frac{k(k-1)}{2N}\right| = \left| \frac{(k'-k)(k'+k-1)}{2N}\right| \le g \le \frac{1}{8} \varepsilon N,
\end{equation}
for all sufficiently large $N$. Therefore,
\[
\left| X_k - \frac{k(k-1)}{2N} \right| < \varepsilon N,
\]
for all sufficiently large positive integers $N$. This shows that $E_{g, \varepsilon/2} \setminus A \subseteq E_{\varepsilon}$, finishing the proof.
\end{proof}

We have arrived at the final step of our strategy.

\begin{proof}[Proof of Theorem~\ref{thm:limit-shape}]
Let $E^c_{g, \varepsilon/2}$ denote the complement of the event $E_{g, \varepsilon/2}$. By Chebyshev's tail bound, we have, for all sufficiently large positive integers $N$,
\begin{align}
    \bbP\!\left( E^c_{g, \varepsilon/2} \right) &\le \sum_{k \in g \mathbb{Z} \cap [N+1]} \bbP\! \left( \left| X_k - \frac{k(k-1)}{2N} \right| \ge \frac{\varepsilon N}{2} \right) \\
    &\le \sum_{k \in g \mathbb{Z} \cap [N+1]} \frac{\Var(X_k)}{(\varepsilon N/2)^2} \\
    &\le \frac{2}{\varepsilon^2 N^3} \sum_{k \in g \mathbb{Z} \cap [N+1]} k^2 &\text{(by Proposition~\ref{prop:variance-bound})} \\
    &< \frac{3}{\varepsilon^3} \cdot \frac{\log N}{N \, \log \log N}.
\end{align}
Combining this with Proposition~\ref{prop:reduction}, we finish the proof.
\end{proof}

\bigskip

\section*{Acknowledgments}
I would like to thank Morris~Ang, Alexei~Borodin, Matthew~Nicoletti, Alex~Postnikov, Sahana~Vasudevan, and Wijit~Yangjit for insightful discussions. I would like to thank Alex~Postnikov specifically for telling me about Proposition~\ref{prop:continuous-family} and showing his proof to me, which led to the discussion of waterfalls in this paper. I am grateful for Alex~Postnikov and Alexei~Borodin specifically for their encouragement. I would also like to thank Richard~Kenyon for sharing with me a copy of the slides from his ``permutons" talk. I would like to thank Sorawee~Porncharoenwase for algorithmic insights and technical help. I used {\em Polymake}, {\em R}, {\em Racket}, and {\em Wolfram~Alpha} to help with computations.

\bibliographystyle{alpha}
\bibliography{ref}

\end{document}